\theoremstyle{plain}
\newtheorem{theorem}{Theorem}[section]
\newtheorem{definition}[theorem]{Definition}
\newtheorem{lemma}[theorem]{Lemma}
\newtheorem{corollary}[theorem]{Corollary}
\newtheorem{proposition}[theorem]{Proposition}
\theoremstyle{remark}
\newtheorem{remark}[theorem]{Remark}
\numberwithin{equation}{section}
\newcommand{\C}{\mathbb{C}}
\newcommand{\R}{\mathbb{R}}
\newcommand{\Z}{\mathbb{Z}}
\newcommand{\N}{\mathbb{N}}
\newcommand{\F}{\mathcal{F}}
\renewcommand{\Im}{\operatorname{Im}}
\renewcommand{\Re}{\operatorname{Re}}
\newcommand{\norm}[1]{\left\lVert #1\right\rVert}
\def\({\left(}
\def\){\right)}
\def\<{\left\langle}
\def\>{\right\rangle}
\def\le{\leqslant}
\def\ge{\geqslant}
\def \F{\mathcal{F}}
\DeclareMathOperator{\im}{Im}
\DeclareMathOperator{\re}{Re}
\newcommand{\todayd}{\the\year/\the\month/\the\day}
\newcommand{\Hsp}{(H^1({\mathbb{R}}^3))^N}
\newcommand{\IM}{I_{\text{max}}}
\newcommand{\TM}{T_{\text{max}}}
\newcommand{\Tm}{T_{\text{min}}}
\newcommand{\uv}{\bm{u}}
\newcommand{\wv}{\bm{w}}
\newcommand{\uo}{\bm{u}_0}
\newcommand{\uc}{\bm{u}_c}
\newcommand{\wzero}{\bm{w}_n(0)}
\newcommand{\Pv}{\bm{\Phi}}
\renewcommand{\utilde}{\tilde{\bm{u}}}
\newcommand{\Fl}{U(t-t_0)\bm{u}_0}
\newcommand{\FnlF}{\displaystyle \int_{t_0}^{t} U(t-s) \bm{F}(\bm{u}(s))ds}
\newcommand{\ep}{\varepsilon}
\newcommand{\phiv}{\bm{\phi}}
\newcommand{\Qv}{\bm{Q}}
\newcommand{\lims}{\varlimsup}
\newcommand{\Xnorm}[2]{\| {#1} \| _{#2}}
\newcommand{\inner}[2]{\langle {#1}, {#2}\rangle}
\newcommand{\ctext}[1]{\raise0.2ex\hbox{\textcircled{\scriptsize{#1}}}}
\newcommand{\br}[1]{\{ {#1} \}}
\newcommand{\Lsp}[3]{L^{#1}({#2};{#3})}
\newcommand{\Lpq}[2]{L^{#1}L^{#2}}%
\newcommand{\Equ}[1]{i\partial_t{#1} + \Delta {#1}}
\newcommand{\rv}[1]{\bm{r}_n^{#1}}
\newcommand{\tn}[1]{t_n^{#1}}
\newcommand{\yn}[1]{y_n^{#1}}
\newcommand{\UT}[1]{(U(t_n^{#1})T_{y_n^{#1}})}
\newcommand{\UTinv}[1]{(U(t_n^{#1})T_{y_n^{#1}})^{-1}}
\newcommand{\PN}[1]{P_{{#1}N}}
\newcommand{\Finverse}[1]{\mathcal{F}^{-1} \left[{#1} \right]}
\newcommand{\Ty}[1]{T_{#1}^{-1}}
\begin{document}
\title[]
{Scattering below ground states for a class of systems of nonlinear Schrodinger equations}

\author[S. Masaki]{Satoshi MASAKI}
\address[]{Division of Mathematical Science, Department of Systems Innovation, Graduate School of Engineering Science, Osaka University, Toyonaka, Osaka, 560-8531, Japan}
\email{masaki@sigmath.es.osaka-u.ac.jp}
\author[R. Tsukuda]{Ryusei TSUKUDA}
\address[]{Division of Mathematical Science, Department of Systems Innovation, Graduate School of Engineering Science, Osaka University, Toyonaka, Osaka, 560-8531, Japan}
\email{r-tsukuda@sigmath.es.osaka-u.ac.jp}
\keywords{}
\subjclass[2020]{Primary 35Q55; Secondary 35B40 }


\begin{abstract}
In this paper, we consider the scattering problem for a class of $N$-coupled systems of the cubic nonlinear 
Schr\"odinger equations in three space dimensions.
We prove the scattering of solutions which have a mass-energy quantity 
less than that for the ground states.
This result is previously obtained by Duyckaerts-Holmer-Roudenko for the single cubic nonlinear 
Schr\"odinger equation in three space dimensions.
It turns out that the result can be extended to a wide class of $N$-coupled systems.
\end{abstract}

\maketitle

\section{Introduction}
In this paper, we study the Cauchy problem of the following $N$-coupled system of nonlinear
Sch\"{o}rdinger equations:
\begin{equation}\label{E:gNLS}
\left\{
\begin{aligned}
& \, \Equ{u_j} + F_j(\uv) =0, \quad 1 \le j \le N, \quad  (t,x)\in \R \times \R^3,  \\
& \, \uv(t_0)=(u_{1,0},u_{2,0},\dots,u_{N,0})\in (H^1(\R^3))^N, \\
\end{aligned}
\right.
\tag{gNLS}
\end{equation}
where $N \ge 2$ and $\uv=\uv(t,x)=(u_1(t,x),u_2(t,x),\dots,u_N(t,x))$ is a $\C^N$-valued unknown.
$\Delta = \frac{\partial^2}{\partial x_1^2} + \frac{\partial^2}{\partial x_2^2}+ \frac{\partial^2}{\partial x_3^2}$
is the standard Laplacian.
$F_j$ is a cubic nonlinearity. 
We assume that the nonlinearity  is given by the relation
\begin{align}\tag{HS}\label{E:Hamiltonian}
  F_j := \tfrac{1}{2}\partial_{\overline{z_j}} g 
\end{align}
from a $\R$-valued quartic polynomial $g \colon \C^N \to \R$ with real coefficients which satisfies
the gauge condition
\begin{align}\label{gauge condition}\tag{GC}
  g(e^{i\theta}z_1,e^{i\theta}z_2,\dots,e^{i\theta}z_N)=g(z_1,z_2,\dots,z_N) 
\end{align}
for all $(z_1,z_2,\dots,z_N) \in \C^N$ and $\theta \in \R$.
One consequence of assumption \eqref{E:Hamiltonian} is that \eqref{E:gNLS} possesses the Hamiltonian structure. 
Indeed, \eqref{E:gNLS} has the conserved energy
\begin{equation}\label{E:energy}
	  E(\uv) := \int_{\R^3} 
  \left(
    \frac{1}{2}\sum_{j=1}^N |\nabla u_j|^2 - \frac{1}{4} g(\uv)
    \right)dx,
\end{equation}
at least formally. By \eqref{gauge condition} and \eqref{E:Hamiltonian},
there are two more conserved quantities:
  \begin{align*} 
  & M(\uv) := \int_{\R^3} 
   \frac{1}{2}\sum_{j=1}^N | u_j|^2 
 dx,&
  &  P(\uv) := \sum_{j=1}^N  \im{
    \int_{\R^3} 
   \overline{u}_j \nabla u_j 
 dx
  }.
  \end{align*}
They correspond to the mass (or charge) and the moment, respectively.
We denote $\bm{F}(\uv)=(F_1(\uv),F_2(\uv),\dots,F_N(\uv))$.
For the existence of bound states, we suppose
\begin{align}\label{gstate}
  g_{\text{max}} := \max_{  |z_1|^2 + |z_2|^2 + \dots |z_N|^2=1} g(z_1,z_2,\dots,z_N) >0.
\end{align}

The system \eqref{E:gNLS} contains several well-known systems.
The first one is the Manakov system or the vector valued  nonlinear
Sch\"{o}rdinger equation:
\begin{equation*}
   \Equ{u_j} +    u_j \sum_{j=1}^N |u_j|^2 =0, \quad 1 \le j \le N, \quad  (t,x)\in \R \times \R^3.
\end{equation*}
This system is given by $g(u_1,u_2,\dots,u_N)=(\sum_{j=1}^N |u_j|^2)^2$.
It is known that the system is an integrable system in one space dimension \cite{Manakov} (see also \cites{APT1,APT2}).
The second example is as follows:
\begin{equation*}
  \left\{
    \begin{aligned}
      & \, \Equ{u_1} +(a+b)(|u_1|^2 + |u_2|^2)u_1 + (a-b)|u_3|^2u_1 + bu_2^2\overline{u_3} =0, \\
      & \, \Equ{u_2} + (a+b)(|u_1|^2 + |u_3|^2)u_2 + a|u_2|^2u_2 +2 bu_1\overline{u_2}u_3 =0, \\ 
      & \, \Equ{u_3} + (a+b)(|u_2|^2 + |u_3|^2)u_3 + (a-b)|u_1|^2u_3 + b\overline{u_1}u_2^2 =0, \\ 
    \end{aligned}
  \right. 
\end{equation*}
where $(t,x)\in \R \times \R^3$ and  $a,b \in \R$ are constants.
This is a model system in the spinor Bose-Einstein condensate  (see \cite{Wadati-Tsuchida}, for instance). 
This system corresponds to the choice $N=3$ and
\begin{align*}
	g(z_1,z_2,z_3) ={}& a(|z_1|^2 + |z_2|^2 + |z_3|^2)^2 \\&+ b((|z_1|^2-|z_3|^2)^2  + 2 |z_2|^2 (|z_1|^2+|z_3|^2) + 4 \Re (\overline{z_1} z_2^2 \overline{z_3})).
\end{align*}
One easily deduces from this form that there exists $a_0=a_0(b)$ such that \eqref{gstate} holds for $a>a_0$.

The existence and the characterization of the ground states of \eqref{E:gNLS} are studied by the first author in \cite{Masaki}. 
The instability of the ground states and
a sharp criteria for the global existence are also obtained by using the variational  characterization of the ground states.
The global existence result is a global-existence-below-ground-state type result.
The aim of this paper is to establish the scattering of the global solutions.

Before going into details, we briefly recall the results for the single NLS equation:
\begin{align}\label{E:NLS}
  \left\{
  \begin{aligned}
    & \, \Equ{u} + \mu |u|^{p-1}u =0,\ \ t \in \R,\, x \in \R^d,  \\
    & \, u(0,x)=u_0 \in H^1(\R^d),
    \end{aligned}
  \right.
  \tag{NLS}
\end{align}
where $d\ge 1$, and $\mu = \pm 1$, and $1+\frac4d \le p<2^*-1$ with $2^*=\infty$ if $d=1,2$ and $2^*=\frac{2d}{d-2}$ if $d\ge 3$.
The local well-posedness in $H^1(\R^d)$ is well-known (see \cite{Cazenave}, for instance).
For an $H^1$-solution $u(t)$, the following two quantities are conserved:
\begin{align*} 
  M(u) := \int_{\R^d} 
    \frac{1}{2}|u|^2
  dx,\ \ \ 
  E(u) := \int_{\R^d} 
  \left(
    \frac{1}{2}|\nabla u|^2 - \frac{1}{p+1} |u|^{p+1}
    \right)dx
\end{align*}
We say $u(t)$ scatters for positive time direction (resp. negative time direction) if
$u(t)$ exists globally forward (resp. backward) in time and 
there exists $u_{+} \in H^1$ (resp. $u_{-} \in H^1$) such that
\begin{align*}
   & \lim_{t \to  \infty} \norm{U(-t)u(t) - u_{+}}_{H^1}=0 &
    \left( \text{resp. }  \lim_{t \to - \infty} \norm{U(-t)u(t) - u_{-}}_{H^1}=0 \right)
\end{align*}
holds. This definition is naturally extended to the system case.

There are large number of literature on the scattering problem of \eqref{E:NLS}.
When $\mu = -1$, the nonlinearity is repulsive and all solutions scatter for both time directions
(see \cite{Ginibre-Velo, Bourgain99,Collianderetal, Ryckman-Visan, Tao, Visan, Killip-Tao-Visan, Tao-Visan-Zhang, Dod3d,Dod2d,Dod1d}). 
On the other hand, there exists a non-scattering solution when $\mu = 1$. 
A typical example of the non-scattering solution is a soliton.
Let $Q=Q(x)$ be the positive radial solution to $- \Delta Q + Q = |Q|^{p-1} Q$ on $\R^d$. 
It is known that one can obtain a sharp sufficient condition for scattering by means of variational characterization of $Q$.
As for the mass-supercritical and energy-subcritical case $1+ \frac{4}{d} < p < 1 + \frac{4}{d-2}$,
this kind of result is established in
Duyckaerts-Holmer-Roudenko \cite{Duyckaerts-Holmer-Roudenko} and Akahori-Nawa \cite{Akahori-Nawa}
(see also \cite{Holmer-Roudenko, Fang-Xie-Cazenave}). 
In particular, in the 3D cubic case ($p=3$ and $d=3$), the following is obtained
\begin{theorem}[\cite{Duyckaerts-Holmer-Roudenko}]\label{T:DHR}
Let $p=3$, $d=3$, and $\mu=1$.
If the initial data $u_0 \in H^1(\R^3)$ satisfies
  \begin{align*}
    &\norm{u_0}_{L^2} \norm{\nabla u_0}_{L^2} < \norm{Q}_{L^2} \norm{\nabla Q}_{L^2}, \\
    &M(u)E(u)<M(Q)E(Q)  
  \end{align*}
  then the solution $u$ to \eqref{E:NLS} scatters for both time directions.
\end{theorem}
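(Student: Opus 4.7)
The plan is to apply the concentration-compactness and rigidity scheme of Kenig-Merle, as adapted by Duyckaerts-Holmer-Roudenko to the mass-supercritical, energy-subcritical focusing NLS. The first ingredient is the standard $H^1$ local theory together with small-data scattering: there exists $\delta_0>0$ such that $\|U(t)u_0\|_{L^5_tL^{10}_x(\R\times\R^3)}<\delta_0$ ensures that the corresponding solution is global and scatters in $H^1$; moreover, scattering of any $H^1$-solution is equivalent to $\|u\|_{L^5_{t,x}(\R\times\R^3)}<\infty$ by a Strichartz bootstrap and stability argument.

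Next comes the variational step. The sharp Gagliardo-Nirenberg inequality reads $\|u\|_{L^4}^4 \le C_{GN}\|\nabla u\|_{L^2}^3\|u\|_{L^2}$, with optimizer $Q$ and sharp constant determined by $M(Q)$ and $E(Q)$ via the Pohozaev identities for $-\Delta Q+Q=Q^3$. Plugging this into the conserved energy and using that $\|u_0\|_{L^2}\|\nabla u_0\|_{L^2}<\|Q\|_{L^2}\|\nabla Q\|_{L^2}$ and $M(u_0)E(u_0)<M(Q)E(Q)$, a continuity argument on the connected sub-level set of $E(u)M(u)$ traps $\|u(t)\|_{L^2}\|\nabla u(t)\|_{L^2}$ strictly below $\|Q\|_{L^2}\|\nabla Q\|_{L^2}$ for all $t$ in the maximal existence interval. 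In particular this gives a uniform $H^1$ bound and hence global existence, together with a sub-threshold coercivity estimate $8\|\nabla u(t)\|_{L^2}^2-6\|u(t)\|_{L^4}^4\ge c\|\nabla u(t)\|_{L^2}^2$ with $c>0$ depending only on the initial gap to the ground state.

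The core argument is a proof by contradiction. Define $\delta_c$ as the supremum of $\delta\le M(Q)E(Q)$ such that scattering holds for every initial datum satisfying both sub-threshold conditions and $M(u_0)E(u_0)<\delta$, and assume $\delta_c<M(Q)E(Q)$. Applying Keraani's linear profile decomposition to a sequence of non-scattering initial data realizing this threshold, and combining with the nonlinear perturbation lemma for \eqref{E:NLS}, one extracts a critical element $u_c$ at level $\delta_c$ whose orbit is precompact in $H^1$ modulo translations, i.e. there exists $x(t)\in\R^3$ such that $\{u_c(t,\cdot+x(t))\}_{t\in\R}$ is relatively compact in $H^1(\R^3)$.

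The rigidity step will be the main obstacle: one must exclude such a compact, non-scattering trajectory. The weapon is the localized virial identity. Choosing a radial cutoff $\phi_R$ that equals $|x|^2$ on $\{|x|\le R\}$ and is constant outside $\{|x|\le 2R\}$, a direct computation yields
\[
\frac{d^2}{dt^2}\int_{\R^3}\phi_R|u_c|^2\,dx = 8\|\nabla u_c\|_{L^2}^2-6\|u_c\|_{L^4}^4+\mathrm{err}(R,t),
\]
with an error term controlled uniformly in $t$ by the tails of the precompact orbit outside $\{|x|\lesssim R\}$. After reducing to $P(u_c)=0$ via a Galilean transformation and showing that $x(t)=o(t)$ using mass and momentum conservation, one chooses $R$ large enough so that $\mathrm{err}(R,t)$ is dominated by the sub-threshold coercivity gain; integrating on a long time interval then contradicts the boundedness of $\int\phi_R|u_c|^2\,dx$ unless $\nabla u_c\equiv 0$, forcing $u_c\equiv 0$. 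This contradicts the non-scattering of $u_c$ and proves $\delta_c=M(Q)E(Q)$, which is the theorem.
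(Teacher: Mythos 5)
Your proposal is correct and follows essentially the same Kenig--Merle concentration-compactness/rigidity scheme that the paper implements (in its system generalization, Sections \ref{S:lwp}--\ref{S:proof}): small-data scattering and an equivalent scattering criterion in a critical Strichartz norm, the variational trapping below the ground state with coercivity $8\|\nabla u\|_{L^2}^2-6\|u\|_{L^4}^4\ge c\|\nabla u\|_{L^2}^2$ (the paper's $8K(\uv)\ge c H(\uv)$), extraction of a critical element via profile decomposition and long-time perturbation, reduction to $P(u_c)=0$ by a Galilean boost, control $x(t)=o(t)$, and the truncated virial contradiction. The only differences are cosmetic, e.g.\ your choice of scattering norm ($L^5_{t,x}$ in the spirit of Duyckaerts--Holmer--Roudenko) versus the paper's non-admissible $S(I)=L^8_tL^4_x$, which play identical roles.
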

We remark that a sharp blowup result is also obtained in these studies.
Let us concentrate on the scattering problem in this paper.

Theorem \ref{T:DHR} is obtained by concentration-compactness/rigidity-type argument
 initiated by  Kenig and Merle \cite{Kenig-Merle}
(see \cite{Dodson-Benjamin:radial, Dodson-Murphy} for a proof without concentration compactness). 
In \cite{Kenig-Merle,Killip-Visan, Dodson-Benjamin}, the energy-critical case $d\ge 3$ and $p=2^*-1$ is studied. 
This approach is also used in the mass-critical case $d\ge 1$ and
$p= 1 + \frac{4}{d}$ (\cite{Killip-Tao-Visan, Killip-Visan-Zhang,Dodson}). 
In the above theorem, the mass-energy quantity of solution is assumed to be smaller than that of the ground state.
The case where it is slightly above that of the ground state is studied in \cite{Nakanishi-Schlag} (see also \cite{Duyckaerts-Roudenko} for the threshold case).

This kind of problems are studied also for NLS systems. See, for instance, \cite{Cheng-Guo-Hwang-Yoon,Inui-Kishimoto-Nishimura, Hamano,Hamano-Inui-Kishimoto}. 
By these studies, it is revealed that the so-called mass-resonant condition plays an important role.
This is a condition about the coefficients of the Laplacian in the linear part of equations in a system.
The validity of the condition corresponds to that of the Galilei invariance.
We just point out that \eqref{E:gNLS} satisfies the condition and
omit a precise description of the condition.

\subsection{Main result}
Let us make notations.
Let $L^p(\R^d)$ ($1\le p \le \infty$) and $H^1(\R^d)$ be the standard Lebesgue and Sobolev spaces on $\R^d$, respectively.
For a Banach space $X$ of $\C$-valued functions, $X^N$ stands for the set of $\C^N$-valued functions 
to which each component belongs to $X$. We define the norm of $X^N$ as follows: 
For ${\bf f}=(f_1,\dots,f_N) \in X^N$ with $f_j \in X$ for $1 \le j \le N$,
$\norm{\bf f}_{X^N} := (\sum_{j=1}^N \norm{f_j}_X^2)^{1/2}$.

\begin{definition}[an $H^1$-solution to \eqref{E:gNLS}]
  Let $I \subset \R$ be an interval and let $t_0 \in \bar{I}$. 
  We say a function $\uv(t)$ is an $H^1$-solution to \eqref{E:gNLS} on $I$ if $\uv(t)$ belongs to
$    {(C(I;H^1(\R^3)) \cap L^{2}(I;W^{1,6}(\R^3)))}^2
 $
  and satisfies
  \begin{align*} 
    u_j(t)=U(t-t_0)u_{j,0} + i \displaystyle \int_{t_0}^{t} U(t-s) F_j(\bm{u}(s))ds
  \end{align*}
  on $I$ for all $j=1,2,\dots,N$. 
\end{definition}

Note that if $\bm{\Phi} = (\Phi_1,\Phi_2,\dots,\Phi_N) \in (H^1(\R^d))^N$ is a solution to the nonlinear elliptic system
\begin{align}\label{E:gE}
  - \Delta \Phi_j + \omega \Phi_j = F_j(\bm{\Phi}),  \quad 1 \le j \le N, \quad  x\in \R^3,  
    \tag{gE$_\omega$}
\end{align}
for some $\omega>0$
then the function $(e^{i \omega t}\Phi_1,e^{i \omega t}\Phi_2, \dots, e^{i \omega t}\Phi_N)$ is an exact $H^1$-solution to \eqref{E:gNLS}.
They are one-soliton solutions to \eqref{E:gNLS}.

Let us now recall the characterization of ground state given in \cite{Masaki}.
As mentioned above, under the assumption \eqref{gstate}, \eqref{E:gNLS} admits ground state solutions.
Roughly speaking, a ground state is a solution to \eqref{E:gE} which has the minimum action
(see Definition $\ref{def.5.2}$ for the precise definition). 
Let  $\mathcal{G}$ be the set of ground state. 
By \cite{Masaki}*{Theorem 1.2}, the set $\mathcal{G}$ is given as follow.
Let $Q_{\omega, g_{\max}}:= (\omega/g_{\max})^{\frac1{2}}Q(\sqrt\omega \cdot) \in H^1(\R^3)$,
where $Q \in H^1(\R^3)$ is the positive radial solution to
$  - \Delta Q + Q = Q^3 $ on $\R^3$.
Note that $Q_{\omega, g_{\max}}(x) $ solves
$  - \Delta Q + \omega Q = g_{\text{max}} Q^3 
$
on $\R^3$. Then, the set of ground state is given as $\mathcal{G}=\cup_{\omega>0} \mathcal{G}_\omega$ with
\begin{align}\label{E:Gomega}
  \mathcal{G}_\omega = \bigcup_{{\bf w} \in T_0} \br{{\bf w} Q_{\omega,g_{\min}}(\cdot - y) \in  (H^1(\R^3))^N \mid y \in \R^3},
\end{align}
where
\begin{align*}
  T_0 := \left\{{\bf z} =(z_1,z_2,\dots,z_N) \in \C^N \ \middle|\ \sum_{j=1}^N |z_j|^2=1,\ g({\bf z})=g_{\text{max}}\right\}.
\end{align*}

Our main result is the following.
\begin{theorem}\label{main}
 If the data $\uo \in \Hsp$ satisfies
  \begin{align}
    & M(\uo)E(\uo) < M(\Qv)E(\Qv),\label{condition1}\\
    & \norm{\uo}_{(L^2)^N} \norm{\nabla \uo}_{(L^2)^N}< \norm{\Qv}_{(L^2)^N} \norm{\nabla \Qv}_{(L^2)^N}\label{condition2}
  \end{align}
  then the corresponding $H^1$-solution $\uv$ to \eqref{E:gNLS} scatters for both time directions,
  where $\Qv \in \mathcal{G}$.
\end{theorem}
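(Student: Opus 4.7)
The strategy is to execute the Kenig--Merle concentration-compactness/rigidity scheme for the vector system \eqref{E:gNLS}, adapting the scalar argument of Duyckaerts--Holmer--Roudenko \cite{Duyckaerts-Holmer-Roudenko} to the coupled setting. I would first use the variational characterization of $\mathcal{G}$ from \cite{Masaki} to derive a sharp Gagliardo--Nirenberg inequality bounding $\int_{\R^3} g(\uv)\,dx$ in terms of $\tnorm{\uv}_{(L^2)^N}$ and $\tnorm{\nabla\uv}_{(L^2)^N}$, with extremizers lying in $\mathcal{G}$. Combined with conservation of $M$ and $E$, a standard continuity argument shows that the hypotheses \eqref{condition1}--\eqref{condition2} propagate to all times in the lifespan, yielding global existence together with a uniform coercivity bound
\begin{equation*}
\tnorm{\nabla \uv(t)}_{(L^2)^N}^2 - \tfrac{3}{4} \int_{\R^3} g(\uv(t))\,dx \ge c\,\tnorm{\nabla \uv(t)}_{(L^2)^N}^2
\end{equation*}
for some $c=c(\uo)>0$; the left-hand side is exactly the virial integrand for \eqref{E:gNLS}, which is the crucial input to the rigidity step.

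Next, vector-valued Strichartz estimates together with a chain-rule estimate on the cubic nonlinearity $\bm{F}(\uv)$ in $L^2_t W^{1,6}_x$ give small-data scattering and a perturbation lemma. I would then define
\begin{equation*}
M_c := \sup\{\delta \le M(\Qv)E(\Qv) \mid \text{every } \uo \text{ with \eqref{condition2} and } M(\uo)E(\uo) < \delta \text{ scatters}\}
\end{equation*}
and assume for contradiction that $M_c < M(\Qv)E(\Qv)$. A linear profile decomposition for $U(t)\uv_{0,n}$ in $\Hsp$, needing only time and space translations thanks to the $H^1$-subcritical setting and the mass-resonant structure of \eqref{E:gNLS} (which makes full Galilean invariance available), together with the vector-valued perturbation lemma, extracts a minimal non-scattering critical element $\uc$ whose orbit $\{\uc(t,\cdot-x(t))\}_{t\in\R}$ is precompact in $\Hsp$ for some continuous path $x\colon\R\to\R^3$.

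Finally, a truncated virial $V_R(t)=\int_{\R^3}\psi_R(x)\,\tfrac{1}{2}\sum_j|u_{c,j}(t,x)|^2\,dx$ with quadratic weight $\psi_R$ of scale $R$ satisfies $V_R''(t)\ge c'>0$ once $R$ is taken larger than the diameter of the precompact orbit, by combining the compactness of $\{\uc(t,\cdot-x(t))\}$ with the coercivity above in the virial identity for \eqref{E:gNLS}. Controlling $x(t)$ by a Lin--Strauss-type argument based on conservation of momentum and mass so that $V_R(t)$ itself remains bounded yields a contradiction after integration over a long time interval, forcing $\uc\equiv 0$ and hence $M_c=M(\Qv)E(\Qv)$.

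The main obstacle I anticipate lies in the profile decomposition together with the Palais--Smale step in the vector setting: the nonlinearity $\bm{F}(\uv)$ defined through \eqref{E:Hamiltonian} genuinely couples all components via the polynomial $g$, so nonlinear profiles must be decoupled at the vector level and $L^p$-orthogonality of profile interactions verified for the whole system rather than component-by-component. A secondary subtlety is that $\mathcal{G}$ is a non-trivial manifold parametrized by $T_0$ and spatial translations, so each variational inequality invoked must be phrased in symmetry-invariant scalar quantities such as $M\cdot E$ and $\tnorm{\uv}_{(L^2)^N}\tnorm{\nabla\uv}_{(L^2)^N}$ rather than in terms of a distinguished ground state representative.
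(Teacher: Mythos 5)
Your proposal follows essentially the same route as the paper: the Kenig--Merle scheme with the variational coercivity $K(\uv)=2H(\uv)-3G(\uv)\ge \tilde\delta H(\uv)$ from the ground-state characterization in \cite{Masaki}, small-data theory and long-time perturbation, a translation-only profile decomposition extracting a critical element with orbit precompact modulo spatial translation, reduction to zero momentum via a Galilean boost, sublinear control of the translation path $y(t)$, and a truncated virial contradiction. The only cosmetic difference is your attribution of the translation-only profile decomposition to mass-resonance/Galilean invariance, whereas it really follows from the $H^1$-intercritical scaling (Galilean invariance is instead what powers the zero-momentum step), but this does not affect the argument.
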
 
\begin{remark}
By \eqref{E:Gomega}, one has $M(\Qv)E(\Qv)=g_{\max}^{-2}M(Q)E(Q)$ and $\norm{\Qv}_{(L^2)^N} \norm{\nabla \Qv}_{(L^2)^N}= g_{max}^{-1} \norm{Q}_{L^2} \norm{\nabla Q}_{L^2}$ for all $\Qv \in \mathcal{G}$, where $Q$ is the (normalized) ground state in the single case.
\end{remark}
The proof of the main theorem is done by extending the argument in \cite{Duyckaerts-Holmer-Roudenko} to the system setup.
We use the concentration-compactness/rigidity argument by Kenig and Merle.
It will turn out that the proof for the single case works with a slight modification.
It can be said that
a large class of NLS systems is handled in a unified way by our formulation.

The outline of the proof is as follows.
We first reformulate the above theorem as a variational problem.
The starting point is that, under the assumption \eqref{condition2}, 
a solution scatters if the value of the mass-energy quantity of the solution is sufficiently small.
Then, let $A_c$ be the critical value of the mass-energy quantity such that 
the scattering of solutions is assured together with \eqref{condition2}.
One sees by definition that this value does not exceed the value of the mass-energy quantity of the ground state,
that is, one has $A_c \le M(\Qv)E(\Qv)$. 
Then, the matter is reduced to showing that
\begin{align*}
  A_c = M(\Qv)E(\Qv).
\end{align*} 
We prove this by contradiction. 
If it fails then one can show the existence of a critical element $\uc(t)$.
This $\uc(t)$ is a global non-scattering solution to \eqref{E:gNLS}
 of which value of the mass-energy quantity is exactly $A_c$ and 
 of which orbit is precompact modulo space translation.
We use a profile decomposition to prove this.
Finally, we control the space translation by the analysis of the moment of the critical element
and apply the truncated virial estimate to deduce a contradiction.

The rest of the paper is organized as follows.
In Section \ref{S:lwp}, we collect basic facts on \eqref{E:gNLS}.
In particular, we obtain the local well-posedness, the equivalent characterization of the scattering, and the long time perturbation.
In Section \ref{S:pd}, we prove the profile decomposition.
The property of the ground states are briefly recalled in Section \ref{S:gs}.
Then, we prove the main result in Section \ref{S:proof}.

\section{Local well-posedness and scattering}\label{S:lwp}

In this section we briefly recall basic results. We omit the proof since they follow by a standard argument
(see \cite{Cazenave}. for instance).

\subsection{Function space and nonlinear estimate}

  For an interval $I \subset \R$, we define $S(I):=\Lsp{8}{I}{(L^4(\R^3))^N}$ and $N(I) := L^\frac{8}{3}(I;(L^\frac{4}{3}(\R^3))^N)$. 
  Note that the $S$-norm is invariant under the scaling $u(t,x) \mapsto \lambda u(\lambda^2 t, \lambda x)$.  
  By Strichartz's estimates for non-admissible pair (See \cite{Kato, Vilela, Koh}), we have
  \begin{eqnarray} \label{SN esti.}
    \Xnorm{\FnlF}{S(I)} \le C   \Xnorm{\bm{F}(\uv)}{N(I)}.
  \end{eqnarray}
  Also by the definition of $F_j$ and using H\"older's inequality, we also have
  \begin{eqnarray} \label{NS esti.}
    \Xnorm{\bm{F}(\uv)}{N(I)} \le  C \Xnorm{\uv}{S(I)}^3.
  \end{eqnarray}

\subsection{Local well-posedness and extension of solutions}
Let us begin with the well-poesdness of \eqref{E:gNLS}.
\begin{theorem}[Local well-posedness]\label{thm.3.2}
  (gNLS) is locally well-posed in $\Hsp$. 
  Namely, for all $t_0 \in \R$, $\uv_0 \in \Hsp$, there exists $T=T(\norm{\uv_0}_{(H^1(\R^3))^N}) >0$  
      and a unique $H^1$-solution $\uv(t)$ to \eqref{E:gNLS} on $[t_0-T,\,t_0+T]$. 
      The solution continuously depends on the data.
\end{theorem}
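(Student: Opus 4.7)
The plan is to construct the solution by a standard contraction mapping argument on a ball in a Strichartz-type space, combining the non-admissible $S(I)$–$N(I)$ estimates stated in \eqref{SN esti.}–\eqref{NS esti.} with the admissible Strichartz estimates needed to retain $H^1$-regularity. Set $I=[t_0-T,t_0+T]$ and define the map
\begin{equation*}
\Phi_{t_0}(\uv)(t) := \bigl(U(t-t_0)u_{j,0}\bigr)_{j=1}^{N} + i\Fsecond{t_0}{\bm{F}(\uv)},
\end{equation*}
on the closed ball
\begin{equation*}
X_{T,R} := \Bigl\{\uv \in \LqWr{I}\cap S(I) \;:\; \Xnorm{\uv}{\LqWr{I}} + \Xnorm{\uv}{S(I)} \le R\Bigr\},
\end{equation*}
equipped with the complete metric $d(\uv,\wv):=\Xnorm{\uv-\wv}{S(I)}$.

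First I would record two a priori bounds for $\Phi_{t_0}$. The non-admissible Strichartz bound \eqref{SN esti.} combined with \eqref{NS esti.} gives directly
\begin{equation*}
\Xnorm{\Phi_{t_0}(\uv)}{S(I)} \le \Xnorm{U(\cdot-t_0)\uv_0}{S(I)} + C\Xnorm{\uv}{S(I)}^{3}.
\end{equation*}
For the $H^1$-piece I would use the usual admissible Strichartz estimates on the pair $(\infty,2),(2,6)$, together with the pointwise chain rule $|\nabla F_j(\uv)| \lesssim |\uv|^{2}|\nabla\uv|$ (which holds since $g$ is a quartic polynomial and $F_j=\tfrac12\partial_{\overline{z_j}}g$), to obtain
\begin{equation*}
\Xnorm{\Phi_{t_0}(\uv)}{\LqWr{I}} \le C\Xnorm{\uv_0}{\Hsp} + C\Xnorm{\uv}{S(I)}^{2}\Xnorm{\uv}{\LqWr{I}}.
\end{equation*}

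Next, choosing $R := 4C\Xnorm{\uv_0}{\Hsp}$ and taking $T$ small enough so that $\Xnorm{U(\cdot-t_0)\uv_0}{S(I)} \le R/4$ (which is possible because $U(\cdot-t_0)\uv_0 \in S(\R)$ by Strichartz and Sobolev embedding, so its $S$-norm on $I$ shrinks with $|I|$ by dominated convergence) and $CR^{2} \le 1/4$ on the cubic term, both bounds give $\Xnorm{\Phi_{t_0}(\uv)}{S(I)}+\Xnorm{\Phi_{t_0}(\uv)}{\LqWr{I}} \le R$, so $\Phi_{t_0}$ maps $X_{T,R}$ into itself. A parallel estimate for differences,
\begin{equation*}
\Xnorm{\Phi_{t_0}(\uv)-\Phi_{t_0}(\wv)}{S(I)} \le C\bigl(\Xnorm{\uv}{S(I)}^{2}+\Xnorm{\wv}{S(I)}^{2}\bigr)\Xnorm{\uv-\wv}{S(I)},
\end{equation*}
yields contraction after shrinking $T$ further. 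Banach's fixed point theorem then produces a unique $H^1$-solution in $X_{T,R}$. Uniqueness among all $H^1$-solutions follows by applying the same difference estimate on a possibly smaller subinterval on which both solutions lie in $X_{T,R}$, and continuous dependence is obtained by running the difference estimate for two solutions with nearby data $\uv_0,\utilde_0$.

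The main conceptual obstacle is that the natural cubic nonlinear estimate \eqref{NS esti.} lives at the $S$-level, which is not an $H^1$-scale Strichartz norm; one must separately propagate the $\LqWr{I}$ norm using genuine admissible Strichartz estimates and control $\nabla\bm{F}(\uv)$ by H\"older in $L^{8/3}_tL^{4/3}_x$ as $|\uv|^{2}|\nabla\uv|$. The gauge condition \eqref{gauge condition} and the polynomial nature of $g$ make this pointwise chain rule trivial, so the argument reduces to the routine verification above, with the lifespan $T$ depending only on $\Xnorm{\uv_0}{\Hsp}$ as claimed.
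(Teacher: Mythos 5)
The paper itself omits the proof of Theorem \ref{thm.3.2} (referring to the standard argument in Cazenave's book), and your contraction scheme in $S(I)$ combined with admissible Strichartz norms to retain $H^1$ is indeed that standard route. However, there is a genuine gap in your bookkeeping that makes the argument fail precisely in the large-data regime the theorem covers. You set a single radius $R := 4C\Xnorm{\uo}{\Hsp}$ for the sum $\Xnorm{\uv}{\LqWr{I}} + \Xnorm{\uv}{S(I)}$ and then require $CR^2 \le 1/4$ ``on the cubic term.'' But $CR^2 \le 1/4$ is a condition on $R$, i.e.\ on the size of the data, and no choice of $T$ can enforce it: with $\Xnorm{\uv}{S(I)}$ only bounded by $R$, the cubic term $C\Xnorm{\uv}{S(I)}^3 \le CR^3$ cannot be absorbed when $\Xnorm{\uo}{\Hsp}$ is large. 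As written, your scheme proves local well-posedness only for small data. The standard repair is a two-radius ball: demand $\Xnorm{\uv}{S(I)} \le \delta$ with $\delta$ a small \emph{absolute} constant (so $C\delta^3 \le \delta/2$ and $C\delta^2 \le 1/2$), while allowing $\Xnorm{\uv}{\LqWr{I}} \le R \sim \Xnorm{\uo}{\Hsp}$; the smallness of the free evolution in $S(I)$, not of $R$, is what shrinking $T$ buys you, and then both self-mapping bounds close for arbitrary data.

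A second, related defect: you justify $\Xnorm{U(\cdot - t_0)\uo}{S(I)} \le R/4$ by dominated convergence. That produces a $T$ depending on the profile of $\uo$, not merely on $\Xnorm{\uo}{\Hsp}$, and so does not deliver the uniform lifespan $T = T(\Xnorm{\uo}{\Hsp})$ asserted in the theorem (which is also what feeds the blowup alternative, Proposition \ref{prop.3.10}). To get uniformity, quantify instead by H\"older in time and Sobolev embedding: since $S(I) = L^8(I;(L^4)^N)$, $H^1(\R^3) \hookrightarrow L^4(\R^3)$, and $U(t)$ is unitary on $H^1$, one has $\Xnorm{U(\cdot - t_0)\uo}{S(I)} \le C T^{1/8} \Xnorm{\uo}{\Hsp}$, so the required smallness $CT^{1/8}\Xnorm{\uo}{\Hsp} \le \delta/2$ fixes $T$ in terms of the $H^1$ norm alone. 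One more small imprecision: for the $H^1$-level bound you place $\nabla \bm{F}(\uv)$ in $N(I) = L^{8/3}(I;L^{4/3})$, which is \emph{not} a dual admissible pair, so the ``usual admissible Strichartz estimates'' do not apply to it directly; either H\"older in time down to the dual admissible $L^{8/5}(I;L^{4/3})$ (harmless factor $T^{1/4}$), or distribute H\"older as $\tnorm{|\uv|^2\nabla\uv}_{L^{8/5}L^{4/3}} \lesssim \Xnorm{\uv}{S(I)}^2 \tnorm{\nabla \uv}_{L^{8/3}L^4}$ and use that $(8/3,4)$ interpolates between $(\infty,2)$ and $(2,6)$. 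With these corrections your difference estimate, uniqueness, and continuous-dependence arguments go through as in the standard treatment.
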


\begin{theorem}[Conservation laws]\label{thm.3.3}
  Let $\uv$ be an $H^1$-solution to (gNLS) on $I$. 
  Then, $M(\uv(t))$, $E(\uv(t))$, and $P(\uv(t))$ do not depend on $t$.
\end{theorem}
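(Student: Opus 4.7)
The plan is to establish each conservation law first for smoother solutions by direct time-differentiation using the equation, and then transfer the identity to $H^1$-solutions by approximation together with the continuous dependence stated in Theorem \ref{thm.3.2}. Concretely, I would approximate an $H^1$-datum $\uo$ by $\uo^{(n)} \in (H^2(\R^3))^N$; a standard persistence-of-regularity argument produces solutions $\uv^{(n)} \in C(J;(H^2)^N) \cap C^1(J;(L^2)^N)$ on a common subinterval $J$, making all pointwise-in-time computations below legitimate. The limit $n\to\I$ is handled by the $C(J;(H^1)^N)$ convergence $\uv^{(n)} \to \uv$, together with the continuity of $M$, $E$, $P$ on $(H^1)^N$ (the quartic term in $E$ is controlled by the Sobolev embedding $H^1(\R^3) \hookrightarrow L^4(\R^3)$).

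For the mass, using the equation gives
\[
  \frac{d}{dt} M(\uv) = \sum_{j=1}^N \Im \int_{\R^3} \bar u_j F_j(\uv)\,dx.
\]
Differentiating \eqref{gauge condition} in $\theta$ at $\theta=0$ yields the pointwise identity $\sum_j \Im(\bar z_j \partial_{\bar z_j} g(z)) = 0$ on $\C^N$, which by \eqref{E:Hamiltonian} says precisely $\sum_j \Im(\bar u_j F_j(\uv)) \equiv 0$, so $M$ is conserved. For the energy, the chain rule combined with \eqref{E:Hamiltonian} yields $\tfrac{d}{dt} g(\uv) = 4 \sum_j \Re(F_j(\uv)\, \partial_t \bar u_j)$, and after an integration by parts of the kinetic term one finds
\[
  \frac{d}{dt} E(\uv) = -\sum_{j=1}^N \Re \int_{\R^3} (\Delta u_j + F_j(\uv))\, \partial_t \bar u_j\, dx;
\]
substituting $\Delta u_j + F_j(\uv) = -i\partial_t u_j$ from the equation makes each integrand purely imaginary, killing the real part.

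For the momentum, inserting the equation and integrating by parts reduces the kinetic contribution to the integral of a total derivative of $|\nabla u_j|^2$, which vanishes. The nonlinear contribution collapses, after one further integration by parts, to $-\sum_j \int (F_j(\uv) \nabla \bar u_j + \bar F_j(\uv) \nabla u_j)\,dx$; by the chain rule and \eqref{E:Hamiltonian} the integrand equals $-\tfrac12 \nabla g(\uv)$, a total derivative, and hence vanishes as well. The only genuine obstacle throughout is the time-regularity of $\uv$: $\partial_t \uv$ is not a priori in $L^2$ for an $H^1$-solution, which is exactly why the smoother approximation above is needed; the algebraic identities themselves are routine consequences of \eqref{E:Hamiltonian} and \eqref{gauge condition}.
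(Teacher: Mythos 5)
Your proposal is correct and takes essentially the same route as the paper: the authors omit the details but record exactly your two algebraic identities --- your mass identity is \eqref{eq:1.2} and your energy identity is \eqref{eq:1.1} --- as the essential ingredients, deferring the regularization to the standard argument in \cite{Cazenave}. Your $H^2$-approximation scheme and the momentum identity $4\sum_{j}\Re\bigl(F_j(\uv)\overline{\nabla u_j}\bigr)=\nabla g(\uv)$ (which the paper itself uses later as \eqref{E:6.15pf2}) are precisely the routine details the authors chose to suppress.
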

We omit the detail of the proof. 
The assumptions \eqref{E:Hamiltonian} and \eqref{gauge condition} give us
the identities
\begin{align}
  & \Re \sum_{j=1}^{N} F_j \overline{\partial_t u_j} = \partial_t (\tfrac{1}{4} g(\uv)), \label{eq:1.1}\\
  & \sum_{j=1}^{N} \Im (F_j(\bm{z})\overline{z_j}) = 0,\ \ (\bm{z}=(z_1,z_2,\dots,z_N) \in \C^N), \label{eq:1.2}
\end{align}
respectively.
These identities play an essential role in proving the conservation laws. 

We have global bound in $S$-norm for small data.

\begin{proposition}\label{cor.3.8}
(1) There exists $\delta_1>0$ such that 
for any $\uo \in \Hsp$ and $t_0 \in \R$ satisfying  
$  \Xnorm{\Fl}{S(\R)} \le \delta_1$, 
there exists an $H^1$-solution $\uv$ to (gNLS) on $\R$, which satisfies
  \begin{align*}
    \Xnorm{\uv}{S(\R)} \le 2 \Xnorm{\Fl}{S(\R)}.
  \end{align*}
(2) There exists $\delta_2>0$ such that 
for any $\uo \in \Hsp$ satisfying 
$  \Xnorm{\uo}{(L^2(\R^3))^N} \Xnorm{\nabla \uo}{(L^2(\R^3))^N} \le \delta_2$,
there exists an $H^1$-solution $\uv$ to (gNLS) on $\R$, which satisfies
$  \Xnorm{\uv}{S(\R)} \le C \Xnorm{\uo}{(L^2(\R^3))^N} \Xnorm{\nabla \uo}{(L^2(\R^3))^N}$.
\end{proposition}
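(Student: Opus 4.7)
The plan is to set up a contraction mapping in the critical scattering space $S(\R)$ and derive both parts from \eqref{SN esti.} and \eqref{NS esti.}; part (2) will follow from part (1) by bounding the free evolution via a non-admissible Strichartz estimate and an interpolation inequality.

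For part (1), work on the closed ball $B_r := \br{\vv \in S(\R) : \Xnorm{\vv}{S(\R)} \le r}$ with $r = 2\delta_1$, and define the Duhamel solution map
\[
  \Phi(\vv)(t) := U(t-t_0)\uo + i \int_{t_0}^t U(t-s)\bm{F}(\vv(s))\,ds.
\]
Combining \eqref{SN esti.} with \eqref{NS esti.} gives $\Xnorm{\Phi(\vv)}{S(\R)} \le \delta_1 + 8C\delta_1^3 \le 2\delta_1$ provided $\delta_1$ is chosen so that $8C\delta_1^2 \le 1$. The Lipschitz estimate comes from the cubic polynomial structure of $\bm{F}$: one has the pointwise bound $|\bm{F}(\bm z) - \bm{F}(\bm w)| \le C(|\bm z|^2 + |\bm w|^2)|\bm z - \bm w|$, so that H\"older in $N(\R)$ combined with \eqref{SN esti.} yields $\Xnorm{\Phi(\vv) - \Phi(\wv)}{S(\R)} \le C\delta_1^2 \Xnorm{\vv - \wv}{S(\R)}$, a strict contraction after further shrinking $\delta_1$. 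The unique fixed point is the desired $S(\R)$-solution with $\Xnorm{\uv}{S(\R)} \le 2\Xnorm{U(t-t_0)\uo}{S(\R)}$. To upgrade the fixed point to an actual $H^1$-solution one reruns the same contraction argument, now in $S(\R) \cap L^\infty(\R;(H^1)^N) \cap L^2(\R;(W^{1,6})^N)$: since $\nabla\bm{F}(\uv)$ is again cubic and the $S(\R)$-size of $\uv$ is already under control, admissible Strichartz closes the estimate and propagates $H^1$-regularity globally.

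For part (2), a non-admissible Strichartz estimate together with the interpolation inequality $\Xnorm{f}{\dot H^{1/2}}^2 \le C\Xnorm{f}{L^2}\Xnorm{\nabla f}{L^2}$ gives
\[
\Xnorm{U(t-t_0)\uo}{S(\R)} \le C\Xnorm{\uo}{(\dot H^{1/2})^N} \le C\bigl(\Xnorm{\uo}{(L^2)^N}\Xnorm{\nabla \uo}{(L^2)^N}\bigr)^{1/2} \le C\delta_2^{1/2}.
\]
Choosing $\delta_2 \le (\delta_1/C)^2$ brings us inside the hypothesis of part (1), and the stated bound on $\Xnorm{\uv}{S(\R)}$ follows. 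I do not anticipate a serious obstacle: the only substantive input beyond \eqref{SN esti.} and \eqref{NS esti.} is the pointwise cubic bound $|\bm{F}(\bm z)| \le C|\bm z|^3$, which is immediate from $F_j = \tfrac12\partial_{\overline{z_j}}g$ with $g$ a quartic polynomial with real coefficients. The vector-valued setting is absorbed cleanly through the component-wise norm $\Xnorm{\bm f}{X^N} = (\sum_j \Xnorm{f_j}{X}^2)^{1/2}$, under which Strichartz and H\"older read exactly as in the scalar cubic 3D NLS.
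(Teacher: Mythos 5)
Your part (1) is correct and is precisely the proof the paper intends: the paper only remarks that (1) follows from the fixed point theorem in $S(\R)$ via \eqref{SN esti.} and \eqref{NS esti.}, which is what you execute. One small repair: to get the stated conclusion $\Xnorm{\uv}{S(\R)}\le 2\Xnorm{\Fl}{S(\R)}$ you should run the contraction on the ball of radius $2\Xnorm{\Fl}{S(\R)}$ rather than $2\delta_1$; with your ball the fixed point only yields $\Xnorm{\uv}{S(\R)}\le 2\delta_1$. Your upgrade to an honest $H^1$-solution by rerunning the contraction with derivatives in admissible Strichartz norms is the standard and correct route.

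Part (2) contains a genuine gap at its final step. The chain you display --- non-admissible Strichartz plus interpolation --- gives
\[
  \Xnorm{\uv}{S(\R)} \le 2\Xnorm{\Fl}{S(\R)} \le C\bigl(\Xnorm{\uo}{(L^2)^N}\Xnorm{\nabla \uo}{(L^2)^N}\bigr)^{1/2},
\]
the \emph{square root} of the product, whereas the proposition asserts a bound \emph{linear} in the product. In the small-data regime the linear bound is strictly stronger, so ``the stated bound follows'' is a non sequitur. In fact no argument can close this gap, because the linear bound is false: take $\uo=\ep\phiv$ with a fixed $\phiv\neq 0$; then $\Xnorm{\uo}{(L^2)^N}\Xnorm{\nabla\uo}{(L^2)^N}\sim\ep^2$, while the Duhamel formula together with \eqref{SN esti.}--\eqref{NS esti.} forces $\Xnorm{\uv}{S(\R)}\ge\tfrac12\Xnorm{\Fl}{S(\R)}\sim\ep$, contradicting $\Xnorm{\uv}{S(\R)}\le C\ep^2$ for small $\ep$. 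The paper's own one-line justification (``the second follows from the first by the Strichartz estimate and the Sobolev embedding'') produces exactly your square-root bound, so your argument is faithful to the intended proof and the defect lies in the statement; but as a proof of the statement as written, you must flag that what is actually proved is the estimate with exponent $1/2$ (equivalently $\Xnorm{\uv}{S(\R)}\le C\Xnorm{\uo}{(\dot H^{1/2})^N}$), not the stated one. This matters downstream: the quadratic (linear-in-product) form of (2) is invoked in \eqref{eq:6.17.5} to sum the tail of nonlinear profiles using only $\sum_j\Xnorm{\phiv_j}{\Hsp}^2<\infty$; with the true square-root bound, which is merely linear in $\Xnorm{\phiv_j}{\Hsp}$, that summation does not follow from the triangle inequality and requires the usual $\ell^2$/orthogonality argument for sums of profiles instead.
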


The first assertion follows from, for instance, the fixed point theorem in $S(\R)$
space. 
This is achieved by \eqref{SN esti.} and \eqref{NS esti.}.
The second follows from the first by the Strichartz estimate and the Sobolev embedding.

To discuss global existence/finite-time blowup,
let us make notation.
\begin{definition}[Maximum existence time]\label{def.3.9}
  Let $t_0 \in \R$, $\uo \in \Hsp$, $\uv$ be an $H^1$-sol to (gNLS). We define
  \begin{align*} 
  \TM :={}& \sup\{T \in \R \mid \uv \text{ can be extended to a solution on } [t_0,T] \}, \\
  \Tm :={}& \inf\{T \in \R \mid \uv \text{ can be extended to a solution on } [T,t_0] \}.
  \end{align*} 
  $\IM := (\Tm,\TM)$ is called a maximal interval of existence.
 A solution $\uv$ is said to be a maximal-lifespan solution
  if  it is a solution on $\IM$.
  If $\IM=\R$, $\uv$ is called a global solution.
\end{definition}
We have the standard blowup alternative.

\begin{proposition}[Blowup alternative]\label{prop.3.10}
  For $t_0 \in \R$, $\uo \in \Hsp$, let $\uv$ be a maximal-lifespan solution. 
  If $\TM < \infty$ then 
  \begin{align*}
    \lim_{t \nearrow \TM} \Xnorm{\uv(t)}{H^1} ={}& \infty,&\text{and}&
    &\Xnorm{\uv(t)}{S([t_0,\TM))} = \infty.
  \end{align*} 
  The same holds for the negative time direction. 
\end{proposition}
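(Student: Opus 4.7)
The plan is to prove both claims by contradiction with the maximality of $\TM$, in two parts: first the $H^1$-blowup, then the $S$-norm blowup, with the second reducing to the first via Strichartz estimates.

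For the $H^1$-blowup, I would argue by contradiction. Suppose $\liminf_{t\nearrow\TM}\tnorm{\uv(t)}_{H^1}=M<\infty$. Then there is a sequence $t_n\nearrow \TM$ with $\tnorm{\uv(t_n)}_{H^1}\le 2M$. Theorem~\ref{thm.3.2} produces a time of existence $T=T(2M)>0$ that depends only on the size of the data in $H^1$. Picking $n$ large enough so that $\TM-t_n<T/2$ and solving \eqref{E:gNLS} with initial time $t_n$ and initial datum $\uv(t_n)$ yields an $H^1$-solution on $[t_n,t_n+T]$, whose right endpoint exceeds $\TM$. Uniqueness in $H^1$ shows that this new solution agrees with $\uv$ on $[t_n,\TM)$, and hence extends $\uv$ strictly past $\TM$, contradicting the definition of $\TM$. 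The same argument works for $\Tm$. Thus $\liminf_{t\nearrow \TM}\tnorm{\uv(t)}_{H^1}=\infty$, which means $\lim_{t\nearrow \TM}\tnorm{\uv(t)}_{H^1}=\infty$.

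For the $S$-norm blowup, suppose for contradiction that $\tnorm{\uv}_{S([t_0,\TM))}<\infty$. I would then fix a small $\ep>0$ (to be chosen) and partition $[t_0,\TM)$ into finitely many subintervals $I_k=[\tau_k,\tau_{k+1}]$ with $\tnorm{\uv}_{S(I_k)}\le\ep$. On each $I_k$, the Duhamel formula together with the non-admissible Strichartz estimate \eqref{SN esti.} and the nonlinear estimate \eqref{NS esti.}, applied both to $\uv$ and (via $\partial_{z_j}F_\ell$ being quadratic and the chain rule) to $\nabla\uv$, gives a bound of the form
\begin{align*}
\tnorm{\nabla\uv}_{S(I_k)}\le C\tnorm{\uv(\tau_k)}_{H^1}+C\tnorm{\uv}_{S(I_k)}^2\tnorm{\nabla\uv}_{S(I_k)}.
\end{align*}
Choosing $\ep$ so that $C\ep^2\le 1/2$ absorbs the last term and produces $\tnorm{\nabla\uv}_{S(I_k)}\lesssim\tnorm{\uv(\tau_k)}_{H^1}$. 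Combining this with the admissible Strichartz estimate at $(q,r)=(\infty,2)$ applied to the same Duhamel formula yields $\tnorm{\uv}_{L^\infty(I_k;H^1)}\lesssim\tnorm{\uv(\tau_k)}_{H^1}$. Iterating this over the finite collection $I_1,\dots,I_K$ gives $\sup_{t\in[t_0,\TM)}\tnorm{\uv(t)}_{H^1}<\infty$, contradicting the first part.

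The main obstacle is the bootstrap on each small-$S$-norm subinterval at the $H^1$-level, which requires a careful pairing of admissible and non-admissible Strichartz estimates together with the control of $\nabla\bm{F}(\uv)$ in the $N$-norm by two factors of $\uv$ in $S$ and one factor of $\nabla\uv$ in $S$. Once that $H^1$-bootstrap is in place, the passage from finiteness of the $S$-norm to a uniform $H^1$-bound is essentially automatic, and reduces the $S$-norm blowup alternative to the $H^1$ one already proved.
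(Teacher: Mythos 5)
Your first half (the $H^1$ blow-up via the local theory) is correct: the existence time in Theorem \ref{thm.3.2} depends only on $\Xnorm{\uo}{(H^1)^N}$, so a bounded sequence $\uv(t_n)$ with $t_n\nearrow\TM$ would let you restart and extend past $\TM$, and this is exactly the standard argument the paper is invoking when it omits the proof. The second half, however, contains a genuine gap: your bootstrap inequality
\begin{align*}
\Xnorm{\nabla\uv}{S(I_k)}\le C\Xnorm{\uv(\tau_k)}{H^1}+C\Xnorm{\uv}{S(I_k)}^2\Xnorm{\nabla\uv}{S(I_k)}
\end{align*}
is false as written. The pair $(8,4)$ underlying the $S$-norm is \emph{not} Schr\"odinger-admissible in three dimensions; the homogeneous estimate it satisfies is $\Xnorm{U(\cdot)\bm{\phi}}{S(\R)}\lesssim\Xnorm{\bm{\phi}}{\dot H^{1/2}}$. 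Applied to $\nabla\uv(\tau_k)$, the linear term therefore costs $\Xnorm{\uv(\tau_k)}{\dot H^{3/2}}$, not $\Xnorm{\uv(\tau_k)}{H^1}$, and $H^1$ solutions have no $\dot H^{3/2}$ regularity. Worse, the quantity $\Xnorm{\nabla\uv}{S(I_k)}$ need not even be finite for an $H^1$ solution: the solution class gives $\nabla\uv\in L^\infty L^2\cap L^2L^6$ on compact subintervals, which by interpolation yields $\nabla\uv\in L^{8/3}L^4$ but never $L^8L^4$. So the absorption step ``choose $\ep$ with $C\ep^2\le 1/2$'' is applied to a possibly infinite quantity and is illegitimate.

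The repair is standard and keeps the rest of your scheme intact: run the bootstrap in the \emph{admissible} norm $\Xnorm{\nabla\uv}{L^{8/3}(I_k;(L^4)^N)}$. H\"older gives
\begin{align*}
\Xnorm{\nabla\bm{F}(\uv)}{L^{8/5}(I_k;(L^{4/3})^N)}\le C\Xnorm{\uv}{S(I_k)}^2\Xnorm{\nabla\uv}{L^{8/3}(I_k;(L^4)^N)}
\end{align*}
(time: $5/8=1/8+1/8+3/8$; space: $3/4=1/4+1/4+1/4$), and since $(8/5,4/3)$ is dual to the admissible pair $(8/3,4)$, the ordinary Strichartz inequality yields
\begin{align*}
\Xnorm{\nabla\uv}{L^{8/3}(I_k;(L^4)^N)}\le C\Xnorm{\uv(\tau_k)}{H^1}+C\ep^2\Xnorm{\nabla\uv}{L^{8/3}(I_k;(L^4)^N)},
\end{align*}
where now the left-hand side is a priori finite on compact subintervals $[\tau_k,\tau]$, $\tau<\TM$, so the absorption is valid, and the resulting bound is uniform in $\tau$, allowing $\tau\to\TM$. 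Feeding this back through Duhamel with the $(\infty,2)$ estimate gives $\sup_{I_k}\Xnorm{\uv(t)}{H^1}\lesssim\Xnorm{\uv(\tau_k)}{H^1}$, and iterating over the finitely many intervals contradicts the first part, exactly as you intended.
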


\subsection{On scattering}
Let us next
establish an equivalent characterization of the scattering. 

\begin{proposition}[Equivalent characterization of scattering]\label{prop.3.12}
  Let $\uv$ be an $H^1$-maximal lifespan to (gNLS). 
  $\uv(t)$ scatters in the positive direction if and only if  there exists $t_0 \in \IM$ such that
$      \Xnorm{\uv}{S( [t_0 , \TM))} < \infty.
$
  Similarly, $\uv(t)$ scatters in the negative direction if and only if  there exists $t_0 \in \IM$ such that
 $  
    \Xnorm{\uv}{S( (\Tm , t_0])} < \infty.
  $
\end{proposition}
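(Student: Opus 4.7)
The plan is to invoke the inhomogeneous Strichartz bound (SN esti.), the cubic estimate (NS esti.), the small-data result Proposition~\ref{cor.3.8}(1), the blowup alternative Proposition~\ref{prop.3.10}, and the fact that $U(t)$ is an $H^1$-isometry. I treat the positive time direction; the negative direction is symmetric.

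For \emph{scattering $\Rightarrow$ finite $S$-norm}, scattering forces $\TM=+\infty$. Strichartz gives $U(\cdot)\uv_+\in S(\R)$, so $\Xnorm{U(\cdot)\uv_+}{S([t_0,\infty))}\to 0$ as $t_0\to\infty$. Combining this with $\Xnorm{U(-t_0)\uv(t_0)-\uv_+}{H^1}\to 0$ and the Strichartz/Sobolev-type bound $\Xnorm{U(\cdot)\phi}{S(\R)}\le C\Xnorm{\phi}{H^1}$ (via interpolation between admissible pairs and Sobolev embedding into $L^4$) yields
\begin{equation*}
 \Xnorm{U(\cdot-t_0)\uv(t_0)-U(\cdot)\uv_+}{S([t_0,\infty))}\le C\Xnorm{U(-t_0)\uv(t_0)-\uv_+}{H^1}\to 0.
\end{equation*}
Hence for some $t_0$, $\Xnorm{U(\cdot-t_0)\uv(t_0)}{S([t_0,\infty))}<\delta_1$, and Proposition~\ref{cor.3.8}(1) together with uniqueness of $H^1$-solutions gives $\Xnorm{\uv}{S([t_0,\infty))}<\infty$.

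For \emph{finite $S$-norm $\Rightarrow$ scattering}, Proposition~\ref{prop.3.10} first rules out finite-time blowup, so $\TM=+\infty$. Partitioning $[t_0,\infty)$ into finitely many subintervals $I_k$ with $\Xnorm{\uv}{S(I_k)}$ as small as desired and bootstrapping with (SN esti.) and (NS esti.) (upgraded to carry one derivative), I obtain $\uv$ in an $H^1$-level Strichartz space on all of $[t_0,\infty)$. The natural scattering candidate is
\begin{equation*}
 \uv_+:=U(-t_0)\uv(t_0)+i\int_{t_0}^{\infty} U(-s)\bm{F}(\uv(s))\,ds.
\end{equation*}
Applying $U(-\cdot)$ to the Duhamel identity and using the $H^1$-isometry of $U$, for $t_2>t_1\ge t_0$,
\begin{equation*}
 \Xnorm{U(-t_2)\uv(t_2)-U(-t_1)\uv(t_1)}{H^1}\le C\Xnorm{\uv}{S([t_1,\infty))}^2\Xnorm{\nabla\uv}{S'([t_1,\infty))},
\end{equation*}
where $S'$ denotes an admissible Strichartz space containing $\nabla\uv$; the first factor tends to zero as $t_1\to\infty$, so $U(-t)\uv(t)$ is Cauchy in $H^1$ and converges to $\uv_+$.

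The step I expect to demand some care is the derivative bookkeeping in the reverse direction: the $S$- and $N$-norms of Section~\ref{S:lwp} are derivative-free, but the scattering statement and the construction of $\uv_+$ are in $H^1$. The fix is routine because $\nabla \bm{F}(\uv)=O(|\uv|^2|\nabla \uv|)$ admits the same H\"older estimate as $\bm{F}(\uv)$ with one factor replaced by $\nabla \uv$, and $\nabla \uv\in L^2_t L^6_x$ is already built into the $H^1$-solution class; combining this with (SN esti.) for an admissible pair yields the $H^1$-level Strichartz and nonlinear estimates needed above.
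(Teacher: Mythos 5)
Your argument is correct and is precisely the standard proof that the paper deliberately omits (Section \ref{S:lwp} declares that these results ``follow by a standard argument,'' citing \cite{Cazenave}): both of your directions --- late-time smallness of $\Xnorm{U(\cdot-t_0)\uv(t_0)}{S([t_0,\infty))}$ combined with the small-data result for the forward implication, and interval partitioning with an $H^1$-level Strichartz bootstrap plus a Cauchy-in-$H^1$ tail estimate for the converse --- follow the usual route for the 3D cubic problem in the non-admissible $L^8L^4$ framework (cf.\ \cite{Holmer-Roudenko}). The derivative-bookkeeping caveat you flag is the right one, and the standard fixes are exactly as you indicate: since $N(I)=L^{8/3}(I;(L^{4/3})^N)$ is not a dual admissible pair, the convergence of $U(-t)\uv(t)$ in $H^1$ must be run through a genuinely dual admissible pair (e.g.\ $\Xnorm{\Jbr{\nabla}\bm{F}(\uv)}{L^2L^{6/5}}\le \Xnorm{\uv}{S}^2\Xnorm{\Jbr{\nabla}\uv}{L^4L^3}$ with $(4,3)$ admissible) rather than through \eqref{SN esti.} itself, and Proposition \ref{cor.3.8}(1), stated with smallness on all of $\R$, needs its routine one-sided variant on $[t_0,\TM)$ together with uniqueness, as you implicitly use.
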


  We show that, given $\uv_+$, 
  it is possible to construct a solution that satisfies $\TM = \infty$ and $\lim_{t \to \infty} U(-t)\uv(t) = \uv_+$ in $\Hsp$.
\begin{theorem}[Final state problem]\label{thm.3.13}
  Let $\uv _+ \in \Hsp$. Then, there exists $T=T(\Xnorm{\uv _+}{H^1})$ 
  and a unique $H^1$-solution to (gNLS) on $[T,\infty)$ such that
  $   U(-t)\uv(t) \longrightarrow \uv_+ \ \ (t \to \infty) \ \  \text{in}\ \  H^1.
$\end{theorem}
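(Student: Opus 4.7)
The plan is to construct $\uv$ as a fixed point of the backward Duhamel map
\[
\Phi(\uv)(t) := U(t)\uv_+ - i\int_t^{\infty} U(t-s)\bm{F}(\uv(s))\,ds
\]
on an interval $[T,\infty)$ with $T=T(\|\uv_+\|_{H^1})$ sufficiently large. Any fixed point of $\Phi$ in a suitable class automatically satisfies both the integral equation defining an $H^1$-solution and the final-state identity.

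The first ingredient is the smallness of the free evolution near $+\infty$. The pair $(8,4)$ is $\dot H^{1/2}$-admissible in $d=3$, so the (Kato) non-admissible Strichartz estimate together with the Sobolev embedding $H^1\hookrightarrow \dot H^{1/2}$ yields $U(\cdot)\uv_+\in S(\R)$, whence $\|U(\cdot)\uv_+\|_{S([T,\infty))}\to 0$ as $T\to\infty$ by dominated convergence. I fix $\eta>0$ small (depending only on universal constants) and choose $T$ so that this tail is at most $\eta$. On $X:=\{\uv:\|\uv\|_{S([T,\infty))}\le 2\eta\}$ equipped with the $S$-metric, combining the inhomogeneous estimate \eqref{SN esti.} with the cubic bound \eqref{NS esti.} produces
\[
\|\Phi(\uv)\|_{S([T,\infty))}\le \eta + C(2\eta)^3,\qquad \|\Phi(\uv)-\Phi(\wv)\|_{S([T,\infty))}\le C(2\eta)^2\|\uv-\wv\|_{S([T,\infty))},
\]
so for $\eta$ small enough $\Phi$ is a strict contraction and produces a unique fixed point $\uv^\ast\in X$.

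To finish, I upgrade $\uv^\ast$ to a genuine $H^1$-solution by plugging it back into the integral equation and running an $L^2$-admissible Strichartz estimate for $\nabla \uv^\ast$; the nonlinear contribution is controlled by a bound of the form $\|\nabla \bm{F}(\uv^\ast)\|\lesssim \|\uv^\ast\|_{S}^2 \|\nabla \uv^\ast\|$ in an appropriate dual norm, and the quadratic factor $O(\eta^2)$ is absorbed on the left. This yields $\uv^\ast\in C([T,\infty);(H^1)^N)\cap L^2([T,\infty);(W^{1,6})^N)$ with a norm controlled by $\|\uv_+\|_{H^1}$. The scattering statement follows from the identity
\[
U(-t)\uv^\ast(t) - \uv_+ = -i\int_t^{\infty} U(-s)\bm{F}(\uv^\ast(s))\,ds,
\]
whose right-hand side vanishes in $H^1$ as $t\to\infty$ by a Strichartz estimate on the shrinking interval $[t,\infty)$ and the same tail argument as in Step~1. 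Uniqueness among $H^1$-solutions with the prescribed scattering behaviour is then immediate: any such solution must have arbitrarily small $S$-norm near $+\infty$, and hence coincides with $\uv^\ast$ on some $[T',\infty)$ by the contraction. The chief technical nuisance is the compatibility bookkeeping between the $S$-norm (which must be small in order to beat the cubic nonlinearity) and the derivative Strichartz norm (which only needs to be bounded by $\|\uv_+\|_{H^1}$); this is a standard feature of final-state problems for the $H^1$-subcritical, $L^2$-supercritical regime and is the only place where any care is required.
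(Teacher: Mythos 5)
Your proposal is correct and coincides with the proof the paper itself has in mind: Section \ref{S:lwp} explicitly omits the proofs of these statements as standard (citing Cazenave), and the standard argument is exactly your contraction for the backward Duhamel map on $[T,\infty)$ in the $S$-norm via \eqref{SN esti.}--\eqref{NS esti.}, followed by the persistence-of-regularity step in the $L^2$-admissible derivative Strichartz norms and the tail estimate for the scattering identity. One minor caveat: your tail smallness $\Xnorm{U(\cdot)\uv_+}{S([T,\infty))}\to 0$ comes from dominated convergence and hence makes $T$ depend on the profile of $\uv_+$ rather than only on $\Xnorm{\uv_+}{H^1}$ as literally stated in the theorem --- but this weaker dependence is all that is actually provable (a time translation of $\uv_+$ preserves the $H^1$-norm while pushing the existence time out arbitrarily) and is all the paper ever uses, e.g.\ in defining the nonlinear profiles via \eqref{eq:6.14b}.
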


\subsection{Long time pertubation}\label{S:pd}
One way to investigate the scattering of  a solution $\uv$ is to compare 
it with a function $\utilde$ which has a finite $S$-norm,  almost solves (gNLS), and is close to $\uv$ at some time.
This technique is called the long time pertubation. 

\begin{theorem}[Long time pertubation]\label{thm.3.14}
  Let $t_0 \in \R$ and let $I$ be an interval such that $t_0 \in \bar I$. 
  Let $\uo \in \Hsp$. 
  Let $\utilde \in (C(I;H^1))^2$ be given. 
  Set $M := \Xnorm{\utilde}{S(I)} < \infty$ and $\bm{\Lambda} := \Equ{\utilde} + \bm{F}(\utilde) $. 
  Then, there exists $\ep_0 = \ep_0(M) > 0$ and $C=C(M) >0$ such that if  
  \begin{eqnarray*}
    \ep := \Xnorm{U(t-t_0)(\uo - \utilde(t_0))}{S(\R)} + \Xnorm{\bm \Lambda}{N(I)} \le \ep_0,
  \end{eqnarray*}
  then the $H^1$-solution $\uv$ to \eqref{E:NLS} with the initial condition $\uv(t_0)=\uv_0$
  exists on $I$ such that 
 $   \Xnorm{\uv - \tilde \uv}{S(I)} \le C \ep.
$
\end{theorem}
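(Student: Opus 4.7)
The plan is to analyze the difference $\wv := \uv - \utilde$, which should satisfy
\begin{align*}
i\partial_t w_j + \Delta w_j = -\bigl[F_j(\utilde + \wv) - F_j(\utilde)\bigr] + \Lambda_j, \qquad \wv(t_0) = \uo - \utilde(t_0),
\end{align*}
and to control $\|\wv\|_{S(I)}$ by a bootstrap argument based on the Strichartz-type estimate \eqref{SN esti.}. Since each $F_j$ is a cubic polynomial in $(\uv, \overline{\uv})$, we have the pointwise bound $|\bm{F}(\utilde + \wv) - \bm{F}(\utilde)| \lesssim |\utilde|^2 |\wv| + |\wv|^3$, so that H\"older in space and time yields
\begin{align*}
\|\bm{F}(\utilde + \wv) - \bm{F}(\utilde)\|_{N(J)} \le C_1\|\utilde\|_{S(J)}^2 \|\wv\|_{S(J)} + C_1 \|\wv\|_{S(J)}^3
\end{align*}
on any subinterval $J \subset I$; this is the natural difference version of \eqref{NS esti.}.

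First I fix a small absolute constant $\eta > 0$ and partition $I$ into consecutive subintervals $J_k = [t_k, t_{k+1}]$, $1 \le k \le n$, with $t_0 \in \{t_1, \ldots, t_{n+1}\}$ and $\|\utilde\|_{S(J_k)} \le \eta$; the number $n$ depends only on $M$ and $\eta$. On each $J_k$, the Duhamel formula for $\wv$ combined with \eqref{SN esti.} gives
\begin{align*}
\|\wv\|_{S(J_k)} \le C\|U(t - t_k)\wv(t_k)\|_{S(\R)} + CC_1\eta^2 \|\wv\|_{S(J_k)} + CC_1 \|\wv\|_{S(J_k)}^3 + C\|\bm{\Lambda}\|_{N(J_k)}.
\end{align*}
Choosing $\eta$ so small that $CC_1\eta^2 \le 1/2$ and running a continuity argument in the upper endpoint of $J_k$ (the local existence of $\uv$ and the continuity of its $S$-norm follow from Theorem \ref{thm.3.2} and Proposition \ref{prop.3.10}) upgrades this to the unconditional bound $\|\wv\|_{S(J_k)} \le 4C(\|U(t - t_k)\wv(t_k)\|_{S(\R)} + \|\bm{\Lambda}\|_{N(J_k)})$, provided the right-hand side stays below a fixed threshold.

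To iterate across subintervals, I apply Strichartz to the identity
\begin{align*}
U(t - t_{k+1})\wv(t_{k+1}) = U(t - t_k)\wv(t_k) + i\int_{t_k}^{t_{k+1}} U(t - s)\bigl[-(\bm{F}(\utilde + \wv) - \bm{F}(\utilde)) + \bm{\Lambda}\bigr]\, ds,
\end{align*}
which yields a recursion $\alpha_{k+1} \le A(\alpha_k + \|\bm{\Lambda}\|_{N(J_k)})$ with $\alpha_k := \|U(t - t_k)\wv(t_k)\|_{S(\R)}$ and $A = A(\eta)$. Iterating gives $\alpha_k \le (2A)^k \ep$ for $k \le n$, so choosing $\ep_0 = \ep_0(M)$ small enough that $(2A)^n \ep_0$ stays below the smallness threshold needed on every $J_k$ closes the bootstrap on all of $I$. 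Summing the per-interval bounds yields $\|\wv\|_{S(I)} \le C(M)\ep$, and the same argument run backward from $t_0$ (together with the blow-up alternative in Proposition \ref{prop.3.10}) shows that $\uv$ is in fact defined on the whole of $I$.

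The main obstacle is the exponential accumulation of error through the $n = n(M)$ subintervals, which is what forces $\ep_0$ to decrease rapidly with $M$; a secondary subtlety is making the per-subinterval continuity argument rigorous, i.e.\ ensuring via Theorem \ref{thm.3.2} and Proposition \ref{prop.3.10} that $\uv$ and the map $t \mapsto \|\wv\|_{S([t_k, t])}$ are genuinely defined and continuous on some initial portion of each $J_k$ before the cubic bootstrap can be closed.
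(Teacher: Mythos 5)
Your proof is correct and is precisely the standard subdivision-plus-bootstrap argument that the paper itself invokes without writing out (it omits the proof of Theorem \ref{thm.3.14} as ``standard,'' citing \cite{Cazenave}; the same scheme appears in \cite{Holmer-Roudenko}): split $I$ into $n=n(M,\eta)$ pieces with $\|\utilde\|_{S(J_k)}\le\eta$, close a continuity argument on each piece using \eqref{SN esti.}--\eqref{NS esti.}, and propagate the data across pieces via the Duhamel identity at the cost of an exponential factor $(2A)^n$, which dictates $\ep_0=\ep_0(M)$. Your handling of the two delicate points --- extending $\uv$ to all of $I$ via Theorem \ref{thm.3.2} and Proposition \ref{prop.3.10}, and the exponential loss forcing the smallness of $\ep_0$ --- matches the intended argument, so there is nothing to correct.
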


\section{Profile decomposition theorem}\label{S:pd}
In this section, we prove the profile decomposition of a  sequence bounded in $\Hsp$.
\begin{definition}[Frequency cutoff]\label{def.1.8}
  For a dyadic number $M \in 2^{\mathbb{Z}}$,  let
  $\chi_M(x) := \chi (x/M)$ be the Littlewood-Paley decomposition. Namely, 
   $\chi_1 \in C_0^{\infty}(\R)$ is chosen so that $\sum_{M\in 2^\Z} \chi_M(x)=1 $ for $\R^3 \setminus \{0\}$. 
  We define  $P_M =\chi_M(i \nabla):= \F^{-1}  \chi_M \F$,
  $\PN{\le} =\sum_{K\le M}P_K$, and
  $\PN{\ge} =\sum_{K \ge M}P_K$.
  Similarly, we define $\PN{<}$ and $\PN{>}$.
\end{definition}

For any $x,y \in \R^3$, define the translation operator by
\begin{align*}
  (T_y f)(x)\coloneqq f(x-y), \ \ T_{y}^{-1} \coloneqq T_{-y}.
\end{align*}
$T_y$ is a unitary group operator in $H^1$.

\begin{theorem}[Profile decomposition]\label{thm.4.7}
  For any bounded sequence $\br{\uv_n} \subset \Hsp$, 
 there exists subsequence of $n$, $J_0 \in \N_0 \cup \br{\infty}$, $\phiv_j \in \Hsp \setminus \br{0}$, 
  $\br{\yn{j}} \subset \R^3$, $\br{\tn{j}} \subset \R$, and $\br{\rv{j}} \subset \Hsp$ 
  such that the followings are true:
 \begin{enumerate}\renewcommand{\theenumi}{\roman{enumi}}
  \item For $J \in [1,J_0]$, 
  \begin{align*}
    \uv_n = \sum_{j=1}^{J} U(\tn{j})T_{\yn{j}} \phiv_j +\rv{J}.
  \end{align*}
  \item For $1 \le j_1 < j_2 \le J_0$, 
  $|\yn{j_1} -\yn{j_2}| + |\tn{j_1} - \tn{j_2}| \rightarrow \infty$
  as $n \to \infty$.  Further, for each $j\in [1,J_0]$, either $t_n^j\equiv 0$, $t_n^j\to\infty$ as $n\to\infty$, or $t_n^j\to-\infty$ as $n\to\infty$.
  \item With the convention $\rv{0} = \uv_n$, 
  \begin{align*}
    \UTinv{j}\rv{k} \rightharpoonup \begin{cases}
      0 & k \ge j \\
      \phiv_j & k< j 
    \end{cases}
    \ \ \ \text{in} \ \ \Hsp \ \ \ (n \to \infty).
  \end{align*}
  \item $\displaystyle\lim_{J \to J_0} \lims_{n \to \infty} 
  \Xnorm{U(\cdot) \rv{J}}{\Lsp{\infty}{\R}{(L^4)^N}} = 0$.
  \item For $A= \mathrm{Id}$ or $\nabla$, $\displaystyle\lim_{n \to \infty} \Xnorm{A\uv_n}{(L^2(\R^3))^N}^2 \ge \sum_{j=1}^{J_0} \Xnorm{A\phiv_j}{(L^2(\R^3))^N}^2.$
  \item Let $G(\uv) = \frac{1}{4} \int_{\R^3} g(\uv)dx$ and $\mathcal{J} := \{j \in [1,J_0] | t_n^j \equiv 0\}$.
  Then, $$\lim_{n \to \infty} G(\uv_n) = \sum_{j\in \mathcal{J}} G(\phiv_j).$$
   \end{enumerate}
\end{theorem}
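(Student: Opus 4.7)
The plan is to follow the now-standard Bahouri--G\'erard/Keraani extraction scheme, adapted componentwise to the vector-valued setting. The main analytic input is an \emph{inverse Strichartz inequality}: if $\{\bm{v}_n\}\subset\Hsp$ is bounded and
\[
 \limsup_{n\to\infty}\Xnorm{U(t)\bm{v}_n}{\Lsp{\infty}{\R}{(L^4)^N}}\geq\eta>0,
\]
then, up to a subsequence, there exist $t_n\in\R$, $y_n\in\R^3$, and a profile $\phiv\in\Hsp$ with $\Xnorm{\phiv}{\Hsp}\gtrsim\eta^{\theta}$ for some $\theta>0$ such that $(U(t_n)T_{y_n})^{-1}\bm{v}_n\rightharpoonup\phiv$ weakly in $\Hsp$. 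I would prove this by a Littlewood--Paley decomposition together with Bernstein and Sobolev to localize at a single dyadic frequency block, then use the pointwise kernel bound on $U(t)P_M$ to pin down a space-time ball carrying a definite fraction of the $L^4$ mass. Since $\Xnorm{\bm{v}}{(L^4)^N}^4$ is equivalent to $\sum_j\Xnorm{v_j}{L^4}^4$, the scalar argument transfers to the system without change.

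Given the inverse lemma, the decomposition is built iteratively: set $\rv{0}:=\uv_n$ and, at step $J+1$, apply the lemma to $\rv{J}$ to produce $(\phiv_{J+1},\tn{J+1},\yn{J+1})$, then define $\rv{J+1}:=\rv{J}-U(\tn{J+1})T_{\yn{J+1}}\phiv_{J+1}$. A diagonal subsequence produces either finitely many profiles (when the $L^\infty L^4$ norm of the free evolution of the remainder hits zero) or an infinite sequence whose norms $\Xnorm{\phiv_j}{\Hsp}$ decay fast enough to force (iv), via the quantitative lower bound in the inverse lemma and the uniform $\Hsp$ bound coming from (v). Along a further subsequence, each $\tn{j}$ is arranged to be either identically zero or to tend to $\pm\infty$. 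Property (iii) is immediate from the construction, and the Pythagorean identity (v) follows from the unitarity of $U(\tn{j})T_{\yn{j}}$ on $L^2$ and $\dot H^1$ together with the weak convergences in (iii). The asymptotic orthogonality (ii) is proved by contradiction: if both $\yn{j_1}-\yn{j_2}$ and $\tn{j_1}-\tn{j_2}$ stayed bounded along some subsequence, then $\UTinv{j_2}U(\tn{j_1})T_{\yn{j_1}}\phiv_{j_1}$ would have a nonzero weak limit, contradicting (iii) applied with $k=j_2-1$ together with the nontriviality of $\phiv_{j_1}$.

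The main obstacle is the nonlinear decoupling (vi). Since $g$ is a gauge-invariant real quartic polynomial, it is an $\R$-linear combination of monomials of the form $z_{a_1}z_{a_2}\overline{z_{a_3}}\,\overline{z_{a_4}}$, so by multilinearity one expands $G\bigl(\sum_{j=1}^{J}U(\tn{j})T_{\yn{j}}\phiv_j+\rv{J}\bigr)$ and analyses each term. A pure diagonal term in a single profile $\phiv_j$ with $\tn{j}\equiv 0$ yields $G(\phiv_j)$ exactly by translation invariance of $G$, while one with $|\tn{j}|\to\infty$ vanishes because $\Xnorm{U(\tn{j})\phiv_j}{(L^4)^N}\to 0$ by the standard dispersive decay together with a density argument. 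Mixed terms involving two or more distinct profiles vanish in the limit: after a change of variables they reduce to integrals of products of relatively space-time translated profiles, and the asymptotic orthogonality (ii), combined with the $L^4$-decay (or weak convergence) of free evolutions, sends them to zero. The remainder contributions are absorbed by the interpolation
\[
 \Xnorm{\rv{J}}{(L^4)^N}\lesssim\Xnorm{U(t)\rv{J}}{\Lsp{\infty}{\R}{(L^4)^N}}^{1/2}\Xnorm{\rv{J}}{(H^1)^N}^{1/2},
\]
followed by first sending $n\to\infty$ (using the uniform $H^1$ bound from (v)) and then $J\to J_0$ (using (iv)). The delicate point is the organizational bookkeeping: one must check that every combination of the three possible behaviours of the time parameters produces a contribution that is either accounted for in $\sum_{j\in\mathcal{J}}G(\phiv_j)$ or vanishes in the double limit.
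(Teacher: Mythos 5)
Your proposal is correct and follows essentially the same route as the paper: an inverse Strichartz/Lieb-compactness lemma (the paper's Proposition \ref{prop.4.7.1}), greedy iterative extraction with square-summability of the profile $H^1$-norms forcing (iv), a Pythagorean expansion for (v), orthogonality of parameters via weak limits for (ii)--(iii) (the paper packages this as Proposition \ref{prop.4.7.2} plus an induction), and quartic decoupling for (vi) through dispersive decay of time-translated profiles, spatial separation of the $\yn{j}$, and smallness of the remainder in $L^4$. The only differences are organizational (the paper uses the supremum functional $\eta$ over weak limits rather than invoking the inverse lemma at each stage, and bounds the (vi) error by a fundamental-theorem-of-calculus estimate instead of a full monomial expansion), neither of which changes the substance.
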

\subsection{Characterization of the orthogonality}
\begin{proposition}[]\label{prop.4.7.2}
  Let $\br{a_n}$ and $\br{b_n}$ be sequences in $\R^3$, $\br{p_n}$ and $\br{q_n}$ be sequences in $\R$. 
  The following three are equivalent:
  \begin{enumerate}
\item  $  |a_n-b_n|+|p_n-q_n| \to \infty$ as $n \to \infty$;
\item  For any $\bm{\phi} \in \Hsp$,
$U(p_n-q_n)T_{a_n-b_n}\bm{\phi} \rightharpoonup 0$ in $\Hsp$ as $n \to \infty$;
\item  For any subsequence $\br{n_k}_{k=0}^\infty$,
 there exists a bounded sequence $\br{\bm{r}_k}$ in  $\Hsp$ and a subsequence of $k$ such that
  \begin{align*}
    (U(p_{n_k})T_{a_{n_k}})^{-1} \bm{r}_{k}  &{}\rightharpoonup \bm{\phi}, &
    (U(q_{n_k})T_{b_{n_k}})^{-1} \bm{r}_{k}  &{}\rightharpoonup 0
\end{align*} 
for some $\bm{\phi} \neq 0 $
along the subsequence.
  \end{enumerate}
\end{proposition}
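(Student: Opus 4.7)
The plan is to establish the cycle $(1) \Rightarrow (2) \Rightarrow (1)$ together with $(1) \Rightarrow (3) \Rightarrow (1)$. Everything hinges on two structural facts: the operators $U(t)$ and $T_y$ are both Fourier multipliers (so they commute) and both are unitary on $\Hsp$. In particular, the heart of the matter is the orthogonality-to-zero lemma $U(\tau_n)T_{y_n}\phiv \rightharpoonup 0$ in $\Hsp$ whenever $|\tau_n| + |y_n| \to \infty$, for a fixed $\phiv \in \Hsp$.

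For $(1) \Rightarrow (2)$, set $\tau_n \coloneqq p_n - q_n$ and $y_n \coloneqq a_n - b_n$. The sequence $U(\tau_n)T_{y_n}\phiv$ is bounded in $\Hsp$ by unitarity, so weak convergence reduces to testing against a Schwartz vector $\bm{\psi}$ (dense in $H^{-1}$). I split into two regimes. If $\{\tau_n\}$ is bounded along a subsequence, I extract $\tau_n \to \tau^*$; then (1) forces $|y_n|\to\infty$, and using $U(\tau_n)T_{y_n} = T_{y_n}U(\tau_n)$ combined with the strong convergence $U(\tau_n)\phiv \to U(\tau^*)\phiv$ in $\Hsp$, the problem reduces to the standard fact that spatial translates to infinity tend weakly to $0$. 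If instead $|\tau_n|\to\infty$, I invoke the dispersive estimate $\|U(-\tau_n)\bm{\psi}\|_{L^\infty} \lesssim |\tau_n|^{-3/2}\|\bm{\psi}\|_{L^1}$, after which H\"older closes the pairing.

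The converse $(2) \Rightarrow (1)$ is a one-line contrapositive: if (1) fails, then after extraction $(\tau_n, y_n)\to (\tau^*, y^*)$, and $U(\tau_n)T_{y_n}\phiv$ converges strongly to $U(\tau^*)T_{y^*}\phiv$, which is nonzero for any $\phiv\neq 0$ by unitarity. Next, $(1) \Rightarrow (3)$ is constructive: given a subsequence $\{n_k\}$, fix any nonzero $\phiv\in \Hsp$ and set $\bm{r}_k \coloneqq U(p_{n_k})T_{a_{n_k}}\phiv$, which is bounded by unitarity. Then $(U(p_{n_k})T_{a_{n_k}})^{-1}\bm{r}_k \equiv \phiv$, while commuting Fourier multipliers yields $(U(q_{n_k})T_{b_{n_k}})^{-1}\bm{r}_k = U(p_{n_k}-q_{n_k})T_{a_{n_k}-b_{n_k}}\phiv \rightharpoonup 0$ by the implication $(1) \Rightarrow (2)$ just proved.

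Finally for $(3) \Rightarrow (1)$, I argue by contradiction. If (1) fails along some subsequence $\{n_k\}$, extract further so that $\tau_{n_k}\to\tau^*$ and $y_{n_k}\to y^*$. With $\phiv_k \coloneqq (U(p_{n_k})T_{a_{n_k}})^{-1}\bm{r}_k \rightharpoonup \phiv$ supplied by (3), I compute $(U(q_{n_k})T_{b_{n_k}})^{-1}\bm{r}_k = U(\tau_{n_k})T_{y_{n_k}}\phiv_k$. Since $U(\tau_{n_k})T_{y_{n_k}} \to U(\tau^*)T_{y^*}$ in the strong operator topology on $\Hsp$ (pointwise convergence of the Fourier symbols plus dominated convergence on Schwartz vectors, upgraded by uniform boundedness), the weak limit equals $U(\tau^*)T_{y^*}\phiv$. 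By (3) this must vanish, forcing $\phiv = 0$ and contradicting $\phiv \neq 0$. The only mildly delicate point is pairing strong operator convergence of $U(\tau_{n_k})T_{y_{n_k}}$ against the weak convergence of $\phiv_k$ in this last step; the rest is bookkeeping around the two-case dichotomy between spatial translation and dispersive decay.
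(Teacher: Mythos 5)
Your proposal is correct and takes essentially the same route as the paper: the same construction $\bm{r}_k = U(p_{n_k})T_{a_{n_k}}\bm{\phi}$ drives the implication into (3), and your argument for (3) $\Rightarrow$ (1) --- extracting convergent parameters by Bolzano--Weierstrass and then pairing the strongly convergent unitaries $U(\tau_{n_k})T_{y_{n_k}}$ against the weakly convergent sequence --- is the mirror image of the paper's computation, which instead applies the inverse operator to the weakly null sequence. The only differences are organizational (you prove (1) $\Leftrightarrow$ (2) and (1) $\Leftrightarrow$ (3) rather than the paper's single cycle) and your filling in of the two-case translation/dispersive-decay argument for (1) $\Rightarrow$ (2), which the paper leaves as ``the standard argument.''
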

\begin{proof}[Proof of Proposition \ref{prop.4.7.2}] 
``(1) $\Rightarrow$ (2)'' follows by the standard argument. 
``(2) $\Rightarrow$ (3)'' is obvious by taking 
$r_k= U(p_{n_k})T_{a_{n_k}} \bm{\phi}$ for some $\bm{\phi} \neq 0 $.

Let us prove ``(3) $\Rightarrow$ (1)'' by showing its contraposition.
  Let $\br{a_n}$, $\br{b_n}$, $\br{p_n}$, and $\br{q_n}$ be sequences such that (1) fails, i.e.,
  $\varliminf_{n\to\infty} (|a_n-b_n|+|p_n-q_n|)<\infty$. 
  By Boltzano-Weierstrass theorem, we obtain a subsequence $\{n_k\}_k$ such 
$  a_n-b_n \rightarrow a_\infty$ and
   $ p_n-q_n  \rightarrow p_\infty$ as $n\to\infty$,
for some $a \in \R^3$ and $ p \in \R$.
Then, for any sequence $\{ r_k \}_k \subset \Hsp$ satisfying
\begin{align*}
    (U(p_{n_k})T_{a_{n_k}})^{-1} \bm{r}_{k}  &{}\rightharpoonup \bm{\phi}, &
    (U(q_{n_k})T_{b_{n_k}})^{-1} \bm{r}_{k}  &{}\rightharpoonup 0
\end{align*}
along a subsequence of $k$, we have
\begin{align*}
	 (U(p_{n_k})T_{a_{n_k}})^{-1} \bm{r}_{k}
	 &{}=  (U(p_{n_k}-q_{n_k})T_{a_{n_k}-b_{n_k}})^{-1} \bm{r}_{k} 
	 ((U(q_{n_k})T_{b_{n_k}})^{-1} \bm{r}_{k}) \\
	 &{}\rightharpoonup   (U(p_\infty)T_{a_\infty})^{-1} 0
\end{align*}
as $k\to \infty$. This shows $\bm{\phi}=0$. (3) fails.
\end{proof} 

\subsection{Lieb's compactness theorem}

  \begin{proposition}[]\label{prop.4.7.1}
    If a sequence $\br{\uv_n}$ satisfies
    \begin{align*}
      \sup_{n \in \N}\Xnorm{\uv_n}{(H^1(\R^3))^N} &{}\le R, &
      \lims_{n \to \infty}\Xnorm{\uv_n}{L^{\infty}(\R;(L^{4})^N)} &{}\ge \ep
    \end{align*}
    for some $R>0$ and $\ep>0$,
    then there exist a subsequence of $n$  and parameters $y_n \in \R^3$ and $t_n \in \R$ such that 
    $(U(t_n)T_{y_n})^{-1}\uv_n$ converges weakly to  $\phiv$ in $\Hsp$.
    Further, there exists $\beta=\beta(\ep,R)>0$ such that $\Xnorm{\phiv}{H^1} \ge \beta$. 
  \end{proposition}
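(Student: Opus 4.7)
The plan is to locate a frequency-localized spacetime bump of $U(t)\uv_n$, use its center to define the translation and time-translation parameters, and identify a nontrivial weak limit with a quantitative $H^1$ lower bound. First I would pick $s_n \in \R$ with $\|U(s_n)\uv_n\|_{(L^4)^N} \ge \ep/2$ using the definition of the $L^\infty_t$-norm; the final parameters will be $t_n := -s_n$ together with a spatial center $y_n$ chosen below. Then I split $U(s_n)\uv_n = P_{\le M_1} U(s_n)\uv_n + \sum_{M_1 < M \le M_2} P_M U(s_n)\uv_n + P_{> M_2} U(s_n)\uv_n$ and estimate the tails by Bernstein: $\|P_{\le M_1} f\|_{(L^4)^N} \lesssim M_1^{3/4}\|f\|_{(L^2)^N}$ and $\|P_{>M_2} f\|_{(L^4)^N} \lesssim M_2^{-1/4}\|\nabla f\|_{(L^2)^N}$. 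Since $\|U(s_n)\uv_n\|_{(H^1)^N} \le R$, choosing $M_1 \sim (\ep/R)^{4/3}$ and $M_2 \sim (R/\ep)^4$ makes both tails at most $\ep/6$ in $(L^4)^N$-norm, leaving the middle piece with norm $\gtrsim \ep$. Pigeonholing over the $O(\log(R/\ep))$ intermediate dyadic scales then yields $M_0 = M_0(n)$ with $\|P_{M_0} U(s_n)\uv_n\|_{(L^4)^N} \gtrsim \ep/\log(R/\ep)$, and after passing to a subsequence I may take $M_0$ independent of $n$.

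Next, the elementary interpolation $\|g\|_{(L^4)^N}^2 \le \|g\|_{(L^\infty)^N}\|g\|_{(L^2)^N}$ combined with $\|P_{M_0} U(s_n)\uv_n\|_{(L^2)^N} \le R$ gives a pointwise lower bound $\|P_{M_0} U(s_n)\uv_n\|_{(L^\infty)^N} \ge c_0(\ep, R) > 0$. I then select $y_n \in \R^3$ and a component index $j_n \in \{1,\dots,N\}$ with $|(P_{M_0}U(s_n)\uv_n)_{j_n}(y_n)| \ge c_0(\ep, R)/2$; passing to a further subsequence, $j_n \equiv j_0$. Set $\vv_n := (U(t_n) T_{y_n})^{-1}\uv_n = T_{y_n}^{-1} U(s_n)\uv_n$, which is bounded by $R$ in $\Hsp$. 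Along a subsequence $\vv_n \rightharpoonup \phiv$ weakly in $\Hsp$ for some $\phiv \in \Hsp$; writing $\phiv = (\phi_1,\dots,\phi_N)$, this will be the candidate profile.

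Finally, because $P_{M_0}$ has a Schwartz convolution kernel, the functional $h \mapsto (P_{M_0} h)(0)$ is continuous on $L^2(\R^3)$ and hence on $H^1(\R^3)$. Weak convergence of the $j_0$-th component of $\vv_n$ to $\phi_{j_0}$ thus gives $(P_{M_0}\phi_{j_0})(0) = \lim_{n\to\infty} (P_{M_0} U(s_n)\uv_n)_{j_0}(y_n)$, whose modulus is at least $c_0(\ep, R)/2$. Bernstein's inequality then yields $\|\phiv\|_{(H^1)^N} \ge \|P_{M_0}\phi_{j_0}\|_{L^2} \gtrsim M_0^{-3/2}\|P_{M_0}\phi_{j_0}\|_{L^\infty} \ge \beta(\ep, R) > 0$, which is the required lower bound. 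The whole argument is standard Littlewood--Paley analysis combined with weak compactness in $H^1$ and poses no conceptual obstacle; the only delicate point is the quantitative tracking of how $\beta$ depends on $(\ep, R)$ through the cutoffs $M_1, M_2$ and the pigeonhole logarithm.
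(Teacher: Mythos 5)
Your proof is correct, and its skeleton is the same as the paper's: truncate in frequency using Bernstein/Sobolev together with the uniform $H^1$ bound, upgrade the $L^4$ lower bound to a pointwise one via the interpolation $\|g\|_{L^4}^2 \le \|g\|_{L^2}\|g\|_{L^\infty}$, let the concentration time and point define $(t_n,y_n)$ (your convention $t_n=-s_n$ correctly gives $(U(t_n)T_{y_n})^{-1}\uv_n = T_{y_n}^{-1}U(s_n)\uv_n$), extract a weak limit by Banach--Alaoglu, and pass the point evaluation through the weak limit because evaluation of a frequency-truncated function at a point is pairing against an $L^2$ (indeed Schwartz) kernel. Where you differ is the decomposition: the paper keeps the entire low-frequency piece with a single one-sided cutoff $P_{\le M}$, $M=M(\ep,R)$, and concludes directly from Cauchy--Schwarz, $\tfrac{\ep^2}{8R} \le \bigl| CM^3 \int m(-Mx)\,\phiv(x)\,dx \bigr| \le C M^{3/2}\|m\|_{L^2}\|\phiv\|_{H^1}$, which immediately yields $\beta(\ep,R)$; you instead isolate a single dyadic annulus $P_{M_0}$ by a two-sided truncation plus a pigeonhole over the $O(\log(R/\ep))$ intermediate scales, then finish with reverse Bernstein $\|P_{M_0}\phi_{j_0}\|_{L^2} \gtrsim M_0^{-3/2}\|P_{M_0}\phi_{j_0}\|_{L^\infty}$. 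Your extra steps are sound but superfluous for this statement: the low-frequency truncation is unnecessary (the $L^2$ bound already feeds the interpolation, and the low-pass kernel is itself Schwartz), and the pigeonhole costs a logarithmic loss in $\beta$ plus the bookkeeping of freezing $M_0$ (and the component index $j_0$, with a harmless $1/\sqrt{N}$ from the $\ell^2$ norm on $\C^N$) along subsequences --- both of which you do handle correctly. What the paper's route buys is a cleaner, slightly better constant and a shorter argument; what your single-annulus formulation buys is the version of the lemma that survives when a scale parameter must be extracted in the profile decomposition, which is not needed here since the 3D cubic problem is treated at a fixed scale.
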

  \begin{proof}[Proof of Proposition \ref{prop.4.7.1}] 
  In this proof, we use the abbreviation $L^{p}X =L^p(\R;X^N)$.
    Let $M$ be a dyadic number to be defined later. 
    By Bernstein's estimate, we have
    \begin{align*}
      \Xnorm{\PN{>} U(\cdot) \uv_n}{\Lpq{\infty}{4}} 
      & \le C M^{-\frac{1}{4}} \Xnorm{|\nabla|^\frac{1}{4} \PN{>} U(\cdot) \uv_n}{L^{\infty}\dot{H}^\frac{3}{4}}
       \le C M^{-\frac{1}{4}} R.
    \end{align*}
    Taking $M$ so that $C M^{-\frac{1}{4}} R \le \frac{\ep}{2}$, we have
    \begin{align*}
      \lims_{n \to \infty} \Xnorm{\PN{\le} U(\cdot) \uv_n}{\Lpq{\infty}{4}} 
      &\ge \lims_{n \to \infty}  (\Xnorm{U(\cdot) \uv_n}{\Lpq{\infty}{4}} -  \Xnorm{\PN{>} U(\cdot) \uv_n}{\Lpq{\infty}{4}}) \\
      & \ge \tfrac{\ep}{2}.
    \end{align*}
  On the other hand, it follows from H\"older inequality that
    \begin{align*}
      \Xnorm{\PN{>} U(\cdot) \uv_n}{\Lpq{\infty}{4}}
      \le R^\frac{1}{2} \Xnorm{\PN{>} U(\cdot) \uv_n}{\Lpq{\infty}{\infty}}^{\frac12}.
    \end{align*}
    Therefore, 
$       \lims_{n \to \infty} \Xnorm{\PN{\le} U(\cdot) \uv_n}{\Lpq{\infty}{\infty}} 
      \ge \tfrac{\ep ^2}{4R}$.
    Thus, there exist  $\br{y_n} \subset \R^3$ and $\br{t_n} \subset \R$  such that
 $     | \PN{\le} U(t_n) \uv_n(y_n) | \ge \tfrac{\ep ^2}{8R}
$    along a subsequence.
	By taking a subsequence if necessary, we have 
    \begin{align*}
      (U(t_n) \uv_n(y_n))^{-1} \uv_n \rightharpoonup \phiv \ \ \text{in}\ \ \Hsp
    \end{align*}
    as $n\to\infty$.
    By the definition of $\PN{\le}$, we have
    \begin{align*}
      (\PN{\le} U^{-1}(t_n)\uv_n)(y_n) 
      &{}= \Finverse{{\chi}({\cdot}/M)\F({U(t_n)^{-1}\uv_n})}(y_n) 
      \\
      &{}= C M^3 \int_{\R^3}  m(-Mx)(U(t_n)T_{y_n})^{-1}\uv_n (x) dx,
    \end{align*}
    where $m= \mathcal{F}^{-1} {\chi}$.
 Hence, we obtain
    \begin{align*}
      (\PN{\le} U^{-1}(t_n)\uv_n)(y_n) 
      \to C N^3\int_{\R^3} m(-Nx) \phiv(x) dx
    \end{align*}
    as $n\to\infty$.
    Thus, we have
    \begin{align*}
      \tfrac{\ep ^2}{8R} & \le \left| CM^3 \int_{\R^3} m(-Mx) \phiv(x) dx \right| 
       \le  CM^\frac{3}{2} \Xnorm{m}{L^2}\Xnorm{\phiv}{H^1},
    \end{align*}
which reads as
 $     \Xnorm{\phiv}{H^1}\ge \tfrac{\ep^2}{8CRM^3 \Xnorm{m}{L^2}} 
      \eqqcolon \beta(\ep,R) >0.$
\end{proof}

\subsection{Proof of the theorem}
\begin{proof}[Proof of Theorem \ref{thm.4.7}] 
In this proof, we take subsequence of $n$ at most countably many times.
We denote them all by $n$, for simplicity. 
Moreover, we denote $\norm{\cdot}:=\norm{\cdot}_{(L^2(\R^3))^N}$. and $\inner{\cdot}{\cdot}:=\inner{\cdot}{\cdot}_{(L^2(\R^3))^N}$.

  Let $\br{\uv_n}$ be a bounded in $\Hsp$. By extracting a subsequence if necessary, we assume that $\lim_{n\to\infty} \norm{\uv}_{\Hsp}$ exists.
  We define 
  $  \nu (\br{\uv_n})$ as a set of functions $  \phiv \in \Hsp$ such that
  there exist a subsequence  and suquences $\br {y_n} \subset \R^3$ and $\br{t_n}\subset \R$
  such that
  $
  (U(t_n)T_{y_n})^{-1} \uv_n \rightharpoonup \phiv
  $
 holds as $n\to \infty$. Further, we let
  \begin{align*}
    \eta (\br{\uv_n}) & \coloneqq \sup_{\phiv \in \nu (\br{\uv_n})} \Xnorm{\phiv}{\Hsp}.
  \end{align*}
{\bf Step 1}.
  We find profiles by utilizing the definition of $\nu (\br{\uv_n})$. 
  When $\eta(\br{\uv_n}) = 0$, 
  we have $\sup_{\phiv \in \nu (\br{\uv_n})} \Xnorm{\phiv}{\Hsp} = 0$. 
  Thus, $\nu (\br{\uv_n})=\{0\}$.  
  So, by taking $J_0=0$, $\rv{0}=\uv_n$, we have (i), (ii), (iii) and (v). 
  Next, we prove (iv). 
  Suppose that $\lims_{n \to \infty} \Xnorm{U(\cdot)\uv_n}{L^{\infty}(L^{4})^N} >0$. 
  By applying Proposition $\ref{prop.4.7.1}$, we find a nonzero element in $ \nu (\br{\uv_n})$, a contradiction.
So the result holds. 
  Therefore, we assume $\eta(\br{\uv_n}) > 0$ in what follows. 
 
 Since $\eta (\br{\uv_n})>0$, there exists $\phiv_1 \in \nu (\br{\uv_n})$ such that
$    \Xnorm{\phiv_1}{H^1} \ge \frac{1}{2} \eta(\br{\uv_n})>0$. In particular, $\phi_1 \neq0$.
  By the definition of $\nu (\br{\uv_n})$, there exist a subsequence, $\yn{1} \subset \R^3$, and $\tn{1} \subset \R$ 
  such that
  \begin{align*}
    \UTinv{1}\uv_n \rightharpoonup \phiv_1\ \ \ (n \to \infty).
  \end{align*} 
  In the following, we determine $\phiv_j$, $\yn{j}$, $\tn{j}$ and $J_0$ inductively. 

  If $\eta(\br{\rv{j}}) = 0$, terminate the operation with $J_0=j$. 
  
  If $\eta(\br{\rv{j}}) > 0$, in the same way as in $j=1$ one finds $\phiv_{j+1} \in \nu (\br{\rv{j}})$ such that
  \begin{align} \label{eq:4.7.1}
    \Xnorm{\phiv_{j+1}}{\Hsp} \ge \tfrac{1}{2} \eta(\br{\rv{j}}),
  \end{align} 
  and there exist a subsequence, $\yn{j+1} \subset \R^3$, $\tn{j+1} \subset \R$ such that
  \begin{align} \label{eq:4.7.2}
    \UTinv{j+1}\rv{j} \rightharpoonup \phiv_{j+1}\ \ \ (n \to \infty).
  \end{align} 
  In addition, we define  
  \begin{align} \label{eq:4.7.3}
    \rv{j+1}\coloneqq \rv{j}-\UTinv{j+1} \phiv_{j+1},
  \end{align} 
  Then, by $\eqref{eq:4.7.2}$, we have  
  \begin{align} \label{eq:4.7.4}
    \UTinv{j+1}\rv{j+1} \rightharpoonup 0\ \ \ (n \to \infty).
  \end{align} 
  Then, we proceed to the next $j$. 

  If the procedure does not finish in a finite time, we let $J_0=\infty$. 

{\bf Step 2.}
  We prove (i), (iv) and (v). 
  Firstly, (i) follows from \eqref{eq:4.7.3}.
  Let us prove (v). Let $A$ be an identity mapping or $A=\nabla$. By \eqref{eq:4.7.3},
  \begin{align*}
    \norm{A \rv{j}}^2 
    &{}= \norm{A \rv{j+1}}^2 + \norm{A\phiv_{j+1}}^2 \\
    &\qquad + 2 \re \inner{ A\rv{j+1}}{(\UT{j+1})A \phiv_{j+1} }.
  \end{align*} 
  The third term in the right hand side tends to zero as $n\to \infty$ because
  \begin{align*}
    \UTinv{j+1} \rv{j+1} &= \UTinv{j+1} (\rv{j} - \UT{j+1} \phiv_{j+1}) \\
    &=\UTinv{j+1} \rv{j} - \phiv_{j+1} 
     \rightharpoonup 0
  \end{align*} 
  as $n \to \infty$.
  Recalling that $\norm{\uv}_{\Hsp}$ is convergent, one deduces that
  \begin{align}\label{eq:4.7.6}
 \lim_{n\to\infty}   \norm{A \rv{j}}^2 = \lim_{n\to\infty}\norm{A \rv{j+1}}^2 + \norm{A\phiv_{j+1}}^2 
  \end{align} 
  for all $J < J_0$.
Hence, by summing this up, one obtains
  \begin{align}\label{eq:4.7.7}
    \lim_{n\to \infty}\norm{A \uv_n}^2 &= \lim_{n\to \infty} \norm{A \rv{j}}^2 + \sum_{j=1}^J \norm{A\phiv_{j}}^2 
     \ge \sum_{j=1}^J \norm{A\phiv_{j}}^2.
  \end{align} 
Thus, (v) follows by taking supremum with respect to $J$.

 Let us move on to the proof of (iv). 
  If $J_0 < \infty$, then $\eta (\br{\rv{J_0}})=0$. 
It follows from Proposition \ref{prop.4.7.1} that
 $   \lims_{n \to \infty} \Xnorm{U(\cdot)\rv{J_0}}{L^{\infty}(L^{4})^N} =0$.
 Suppose that $J_0 = \infty$. 
 Suppose for contradiction that 
  \begin{align*}
    \exists \ep >0 ,\, \exists \br{J_k}_{k \in \N},\, J_k \to \infty\,(k \to \infty),\,\forall k,\, \lims_{n\to\infty} \Xnorm{U(\cdot)\rv{J_k}}{L^{\infty}(L^{4})^N}  \ge \ep.
  \end{align*} 
  By $\eqref{eq:4.7.7}$, we have
  \begin{align*}
    \lims_{n \to \infty} \Xnorm{U(\cdot)\rv{J_k}}{(H^1)^N}^2 = \lims_{n \to \infty} \Xnorm{\rv{J_k}}{(H^1)^N}^2
    \le \lim_{n \to \infty} \Xnorm{\uv_n}{(H^1)^N}^2.
  \end{align*} 
  Set $\lim_{n \to \infty} \Xnorm{\uv_n}{(H^1)^N}^2 \eqqcolon R$. Then, 
  we have $ \eta (\br{\rv{J_k}})\ge \beta (\ep,R)>0$, where $\beta$ is given in Proposition \ref{prop.4.7.1}.
  However, by (v), we obtain $\sum_{j=1}^{\infty} \Xnorm{\phiv_j}{(H^1)^N} < \infty$. 
  Thus, by the definition of $\phiv_{J_k+1}$, we see that
$    \eta (\br{\rv{J_k}}) \le 2 \Xnorm{\phiv_{J_k+1}}{(H^1)^N} \rightarrow 0$ as $j\to \infty,$ 
  which is a contradiction. Hence, (iv) holds.
  
  {\bf Step 3}. 
  Let us prove (ii) and (iii). We consider the case $J_0=\infty$. 
  The latter half of (ii) follows by modifying the definition of $t_n^j$ and $\bm{\phi}_j$.
  We prove the former half of (ii) by induction on the value of $j_2-j_1$. 
By \eqref{eq:4.7.2} and \eqref{eq:4.7.4}, 
  \begin{align*}
    \UTinv{j+1}\rv{j} &\rightharpoonup \phiv_{j+1} \neq 0, &
    \UTinv{j}\rv{j} &\rightharpoonup 0
  \end{align*} 
  as $n\to\infty$.
Hence, we see from ``(3)$\Rightarrow$(1)'' of Proposition \ref{prop.4.7.2}
that (ii) is true if $j_2-j_1=1$.

Let $k_0\ge1$ and
suppose that (ii) is true if $j_2-j_1\le k_0$.
By (i),
\[
	\rv{j} = \sum_{m=1}^{k_0}\UT{j+m} \phiv_{j+m} + \rv{j+k_0+1}.
\]
Hence, by combining the assumption of the induction, ``(1)$\Rightarrow$(2)'' of Proposition \ref{prop.4.7.2}, and
\eqref{eq:4.7.2} with $j=j+k_0$, one sees that
\[
    \UTinv{j+k_0+1}\rv{j} \rightharpoonup \phiv_{j+k_0+1} \neq 0.
\]
Recalling that $    \UTinv{j}\rv{j} \rightharpoonup 0
$ as $n\to\infty$, one sees from ``(3)$\Rightarrow$(1)'' of Proposition \ref{prop.4.7.2}
that (ii) is true when $j_2-j_1=k_0+1$. By induction, we have (ii).
(iii) is a consequence of (i), (ii), and \eqref{eq:4.7.2}.

  Finally, let us prove (vi). Let $J \le J_0$ be finite. By homogeneity of $G$, we have
  \begin{align*}
    \left|
      G(\uv_n) - G ( \sum_{j=1}^{J} U(\tn{j}) T_{\yn{j}} \phiv_j  )
    \right|
   ={}& \left| - 
    \int_{0}^{1} \partial_{\theta} G(\uv_n - \theta \rv{J}) d \theta 
  \right|\\
  \lesssim{}& (\Xnorm{\uv_n}{L^4} + \Xnorm{\rv{J}}{L^4} )^3 \Xnorm{\rv{J}}{L^4}.
  \end{align*}
  In addition, by the normalization rule of $t_n^j$ in (ii), dispersive estimate, and
  the mutual orthogonality of $\br{\yn{j}}$, we have
  \begin{align*}
    \lim_{n \to \infty} G ( \sum_{j=1}^{J} U(\tn{j}) T_{\yn{j}} \phiv_j  ) = \sum_{j\in \mathcal{J} \cap [1,J]} G(\phiv_j).
  \end{align*}
  Combining those equations, Galiardo-Nirenberg inequality, and \eqref{eq:4.7.7}, 
  \begin{align*}
    \lims_{n \to \infty} \left| G(\uv_n) - \sum_{j=1}^{J} G(\phiv_j) \right|
    \lesssim (\lim_{n \to \infty} \Xnorm{\uv_n}{\Hsp} ) ^{3}
    \lims_{n \to \infty} \Xnorm{\rv{J}}{L^4} \to 0
  \end{align*}
 as $J \to J_0$. Thus (vi) holds. 
\end{proof}

\section{Variational characterization of the ground state}\label{S:gs}
In this section, we collect properties of the ground states shown in \cite{Masaki} without proof.

\begin{definition}[]\label{def.5.1}
We define the following functionals:
\begin{align*} 
 H(\uv) :={}& \frac{1}{2} \Xnorm{\nabla \uv}{(L^2(\R^3)^N)}^2, &
  G(\uv) :={}& \frac{1}{4} \int_{\R^3} g(\uv)dx, \\
  K(\uv) :={}& 2 H(\uv) - 3G(\uv), &
  S_{\omega}(\uv) :={}& E(\uv) + \omega M(\uv).
\end{align*}
\end{definition}

\begin{definition}[]\label{def.5.2}
  Let $\omega> 0$. For any $\uv \in \Hsp$, we define
  \begin{align*} 
    &\mathcal{A}_{\omega} := \br{\Qv \in \Hsp \mid \Qv \neq 0,\, S'_{\omega}(\Qv)=0}, \\
    &\mathcal{G}_{\omega} :=\br{\Qv \in \mathcal{A}_{\omega} 
    \mid S_{\omega}(\Qv)= \inf_{\bm{\psi}\in \mathcal{A}_{\omega} }  S_{\omega}(\bm{\psi})}.
  \end{align*}
  In addition, we define $\mathcal{A} := \bigcup_{\omega >0}\mathcal{A}_{\omega}$ and 
  $\mathcal{G} := \bigcup_{\omega >0}\mathcal{G}_{\omega}$. 
  \end{definition}
  
  \begin{remark}
 Note that $\uv \in \Hsp$ is a solution to \eqref{E:gE} if and only if
   $S'_{\omega}(\uv)=0$. 
    Therefore, $\mathcal{A}_{\omega} $ denotes the set of nontrivial solutions to \eqref{E:gE} (bound states).
	The elements in $\mathcal{G}_{\omega}$ are ground state. 
  \end{remark}
Then, as mentioned in Section 1, the following holds:
\begin{theorem}[\cite{Masaki}]
For all $\omega>0$, $\mathcal{G}_\omega$ is given as in \eqref{E:Gomega}.
\end{theorem}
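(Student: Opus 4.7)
The plan is to reduce the system problem to the corresponding scalar one (for which the ground state is explicitly the known positive radial $Q$) by combining a Nehari/Pohozaev-type variational characterization with two pointwise inequalities. First, I would replace the definition of $\mathcal{G}_\omega$ by an equivalent constrained minimization: standard Lagrange-multiplier/scaling arguments (using $K_\omega(\bm{u})=2H(\bm{u})-3G(\bm{u})$ as in Definition \ref{def.5.1}) show that
\[
	m(\omega) := \inf\bigl\{ S_\omega(\bm{u}) \,\big|\, \bm{u}\in \Hsp\setminus\{0\},\ K_\omega(\bm{u})=0 \bigr\}
\]
is attained precisely on $\mathcal{G}_\omega$. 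This converts the abstract definition of the ground state into something amenable to direct inequalities.

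Next, for an arbitrary $\bm{u}\in\Hsp$ set $v(x):=|\bm{u}(x)|$. Kato's inequality gives $|\nabla v(x)|\le |\nabla\bm{u}(x)|$ a.e., so $\|\nabla v\|_{L^2}\le \|\nabla \bm{u}\|_{(L^2)^N}$, while $\|v\|_{L^2}=\|\bm{u}\|_{(L^2)^N}$. Using the gauge invariance \eqref{gauge condition} and the quartic homogeneity of $g$, together with the very definition \eqref{gstate} of $g_{\max}$, one has the pointwise bound $g(\bm{u}(x))\le g_{\max}\, v(x)^4$, with equality at a given $x$ exactly when either $v(x)=0$ or $\bm{u}(x)/v(x)\in T_0$. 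Combining these,
\[
	S_\omega(\bm{u})\ \ge\ \tfrac12\|\nabla v\|_{L^2}^2 + \tfrac{\omega}{2}\|v\|_{L^2}^2 - \tfrac{g_{\max}}{4}\|v\|_{L^4}^4 \ =:\ S_\omega^{\mathrm{scal}}(v),
\]
and a similar comparison holds for $K_\omega$. Hence $m(\omega)\ge m^{\mathrm{scal}}(\omega)$, where the right-hand side is the scalar ground-state level for $-\Delta Q+\omega Q = g_{\max}Q^3$, uniquely (up to translation) achieved by $Q_{\omega,g_{\max}}$.

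To close the loop I would check that the ansatz $\bm{\Phi}(x) = \bm{w}\,Q_{\omega,g_{\max}}(x-y)$ with $\bm{w}\in T_0$ and $y\in\R^3$ actually solves \eqref{E:gE}: applying Euler's identity to the quartic gauge-invariant function $g$ and restricting to the ``ray'' $z\mapsto z\bm{w}$ shows $F_j(z\bm{w})=w_j\, g_{\max}|z|^2 z$, so the system collapses to the scalar equation for $Q_{\omega,g_{\max}}$. For such $\bm{\Phi}$ one also has $S_\omega(\bm{\Phi})=S_\omega^{\mathrm{scal}}(Q_{\omega,g_{\max}})$, proving $m(\omega)=m^{\mathrm{scal}}(\omega)$ and that every element of the right-hand side of \eqref{E:Gomega} lies in $\mathcal{G}_\omega$. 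Conversely, any $\bm{Q}\in\mathcal{G}_\omega$ must saturate every inequality in the chain above: the scalar comparison forces $|\bm{Q}|=Q_{\omega,g_{\max}}(\cdot-y)$ for some $y$, and the pointwise equality $g(\bm{Q})=g_{\max}|\bm{Q}|^4$ forces $\bm{Q}(x)/|\bm{Q}(x)|\in T_0$ wherever $|\bm{Q}|>0$.

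The main obstacle is the final rigidity statement: showing that this angular part is a \emph{single constant} $\bm{w}\in T_0$ rather than a function taking values in $T_0$. I would handle this by exploiting the equality case of Kato's inequality $|\nabla|\bm{Q}||=|\nabla \bm{Q}|$, which forces $\bm{Q}(x)=\bm{w}(x)|\bm{Q}(x)|$ with $\bm{w}(x)$ locally constant on the connected set $\{|\bm{Q}|>0\}=\R^3$ (strict positivity follows from the scalar ground-state profile). Elliptic regularity applied to \eqref{E:gE} and the fact that $T_0$ might be higher-dimensional (because of the gauge orbit and possibly continuous symmetries of $g$) are the delicate points; the argument amounts to showing that any $H^1$ selection $\bm{w}(\cdot)$ of the manifold $T_0$ compatible with the equation is necessarily constant, after which \eqref{E:Gomega} follows.
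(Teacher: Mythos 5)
The paper itself never proves this statement: Section~\ref{S:gs} explicitly collects it from \cite{Masaki} without proof, so the only meaningful comparison is with that reference, whose very title (``scalar-type standing-wave solutions'') names the reduction you propose. Your outline is correct and is essentially the argument there: a variational characterization of $\mathcal{G}_\omega$ through $K$, the comparison $S_\omega(\bm{u})\ge S_\omega^{\mathrm{scal}}(v)$ for $v=|\bm{u}|$ via the diamagnetic-type inequality $|\nabla v|\le|\nabla\bm{u}|$ and the pointwise bound $g(\bm{z})\le g_{\max}|\bm{z}|^4$ from \eqref{gstate}, the verification that $\bm{w}Q_{\omega,g_{\max}}(\cdot-y)$ solves \eqref{E:gE}, and the equality-case rigidity. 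Two steps deserve tightening. First, Euler's identity alone only controls the component of the gradient of $g$ along $\bm{w}$; to get $F_j(z\bm{w})=g_{\max}|z|^2z\,w_j$ you also need the Lagrange multiplier relation at a maximizer, $\partial_{\overline{z_j}}g(\bm{w})=\mu w_j$ with $\mu\in\R$, after which Euler's identity fixes $\mu=2g_{\max}$ and gauge covariance $F_j(e^{i\theta}\bm{z})=e^{i\theta}F_j(\bm{z})$ plus cubic homogeneity finishes. Second, the rigidity you flag as the main obstacle is simpler than you fear and needs neither elliptic regularity nor any structure of $T_0$: before invoking equality, note that $K(\bm{Q})=0$ together with $K^{\mathrm{scal}}(v)\le 0$ forces $K^{\mathrm{scal}}(v)=0$ (otherwise the rescaling $\lambda^{3/2}v(\lambda\cdot)$ with $\lambda<1$ would strictly lower $\tfrac13 H+\omega M$ on the scalar constraint set, contradicting $m(\omega)\ge m^{\mathrm{scal}}(\omega)$), and this forces $H(\bm{Q})=H^{\mathrm{scal}}(v)$ and $G(\bm{Q})=G^{\mathrm{scal}}(v)$ simultaneously; then $v=Q_{\omega,g_{\max}}(\cdot-y)>0$ everywhere, and writing $\bm{Q}=v\,\bm{\theta}$ with $|\bm{\theta}|\equiv1$ gives the pointwise identity $|\partial_k\bm{Q}|^2=|\partial_k v|^2+v^2|\partial_k\bm{\theta}|^2$ (using $\Re(\overline{\bm{\theta}}\cdot\partial_k\bm{\theta})=0$), so equality kills $\nabla\bm{\theta}$ a.e.\ on the connected set $\{v>0\}=\R^3$, making $\bm{\theta}$ a constant $\bm{w}$, which lies in $T_0$ by the saturation $g(\bm{Q})=g_{\max}v^4$. (Incidentally, \eqref{E:Gomega} as printed says $Q_{\omega,g_{\min}}$; this should read $Q_{\omega,g_{\max}}$, consistent with your construction.)
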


Further, since the cubic nonlinearity is mass-supercritical in three space dimensions. There are the potential-well structure. 
We have the following variational characterization of the ground states.
\begin{theorem}[\cite{Masaki}*{Theorem 5.1}]\label{thm.5.10}
  Let $\Qv \in \mathcal{G} $. For $\delta \in (0,1)$, 
  there exists $\tilde{\delta}=\tilde{\delta}(\delta) >0$ 
  such that if $\uv \in \Hsp \setminus \{0\}$ satisfies
  \begin{align*}
    E(\uv)M(\uv) \le (1-\delta) E(\Qv)M(\Qv)
  \end{align*}
  then  $K(\uv) \neq 0$ holds and the followings are true:
  \begin{enumerate}
  \item If $K(\uv) > 0 \ \ $ then $\ \ H(\uv) M(\uv) < (1-\delta) H(\Qv)M(\Qv)$ and 
  $K(\uv) \ge \tilde{\delta} H(\uv)$.
  \item
   If $K(\uv) < 0 \ \ $ then $\ \ H(\uv) M(\uv) >  H(\Qv)M(\Qv)$ and 
  $K(\uv) M(\uv) \le - \tilde{\delta}  H(\Qv)M(\Qv)$.
  \end{enumerate}
\end{theorem}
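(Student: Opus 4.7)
The plan is to reformulate the hypothesis in terms of the scaling-invariant quantities $y(\bm{u}) := H(\bm{u})M(\bm{u})$ and $e(\bm{u}) := E(\bm{u})M(\bm{u})$, to establish a sharp lower bound $e \ge F(y)$ from the system Gagliardo--Nirenberg inequality, and then to separate the cases $K > 0$ and $K < 0$ by an $L^2$-preserving scaling argument on the single variable $\lambda$.

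The first step is the sharp system Gagliardo--Nirenberg inequality. From $g(\bm{u}) \le g_{\max}|\bm{u}|^4$ (by definition of $g_{\max}$), Kato's inequality $\|\nabla|\bm{u}|\|_{L^2} \le \|\nabla \bm{u}\|_{(L^2)^N}$, and the scalar sharp inequality $\|f\|_{L^4}^4 \le C_*\|\nabla f\|_{L^2}^3\|f\|_{L^2}$ saturated by the scalar ground state $Q$, one obtains
\[
    G(\bm{u}) M(\bm{u}) \le A \bigl(H(\bm{u}) M(\bm{u})\bigr)^{3/2},
\]
with equality precisely at $\bm{u} \in \mathcal{G}$ in view of the explicit form \eqref{E:Gomega}. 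Testing \eqref{E:gE} with $\overline{\bm{Q}}$ and with $x\cdot\nabla\overline{\bm{Q}}$, and using the Euler identity for quartic $g$ together with the gauge identity \eqref{eq:1.2}, yields the Pohozaev/Nehari relations $K(\bm{Q}) = 0$ and $e(\bm{Q}) = \tfrac{1}{3}y(\bm{Q})$; the equality case of the inequality above then identifies $y_Q := y(\bm{Q}) = 4/(9A^2)$.

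Setting $F(y) := y - A y^{3/2}$, we have $e(\bm{u}) \ge F(y(\bm{u}))$, and $F$ attains its unique maximum on $[0,\infty)$ at $y = y_Q$ with $F(y_Q) = e(\bm{Q})$. Under the hypothesis, $F(y(\bm{u})) \le (1-\delta) F(y_Q)$, so $y(\bm{u}) \in [0, y_\delta^-] \cup [y_\delta^+, \infty)$ for the two roots $y_\delta^- < y_Q < y_\delta^+$ of $F(y) = (1-\delta)F(y_Q)$; an elementary calculation gives $y_\delta^- < (1-\delta) y_Q$. To decide which branch $y(\bm{u})$ lies in, use the $L^2$-preserving scaling $\bm{u}^\lambda(x) := \lambda^{3/2}\bm{u}(\lambda x)$ and set $\varphi(\lambda) := e(\bm{u}^\lambda) = \lambda^2 y(\bm{u}) - \lambda^3 G(\bm{u})M(\bm{u})$; then $\varphi'(1) = K(\bm{u})M(\bm{u})$, and $\varphi$ is increasing on $[0,\lambda_*]$ and decreasing on $[\lambda_*,\infty)$ with $\lambda_* = 2H/(3G)$. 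If $K(\bm{u}) > 0$ (so $\lambda_* > 1$) and one supposed $y(\bm{u}) \ge y_Q$, then $\lambda_Q := (y_Q/y(\bm{u}))^{1/2} \le 1 \le \lambda_*$ would give $\varphi(\lambda_Q) \ge F(y_Q) = e(\bm{Q})$ and simultaneously $\varphi(\lambda_Q) \le \varphi(1) = e(\bm{u}) \le (1-\delta)e(\bm{Q})$, a contradiction; hence $y(\bm{u}) \le y_\delta^- < (1-\delta) y_Q$, and $K(\bm{u})M(\bm{u}) \ge 2y - 3Ay^{3/2} \ge 2y\bigl(1-(1-\delta)^{1/2}\bigr)$ gives $K(\bm{u}) \ge \tilde\delta H(\bm{u})$ with $\tilde\delta := 2(1-(1-\delta)^{1/2})$. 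The case $K(\bm{u}) < 0$ is symmetric: the scaling argument forces $y(\bm{u}) > y_Q$, hence $y(\bm{u}) \ge y_\delta^+$, and $K(\bm{u})M(\bm{u}) = 2y - 3GM \le -y + 3(1-\delta)e(\bm{Q})$ yields the claimed negative bound. Note $K(\bm{u}) = 0$ is automatically excluded: it would force $2y = 3GM \le 3Ay^{3/2}$, hence $y \ge y_Q$, and then $e(\bm{u}) = y/3 \ge e(\bm{Q})$, contradicting the hypothesis.

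The main obstacle is establishing the sharp system Gagliardo--Nirenberg inequality with equality precisely on $\mathcal{G}$: this is the bridge between the scalar and vectorial theories, relying on Kato's inequality for $|\bm{u}|$ and the explicit description \eqref{E:Gomega} of ground states as ${\bf w}Q_{\omega,g_{\max}}(\cdot - y)$ with ${\bf w}\in T_0$. Once this is in hand, the remainder is an exercise in single-variable calculus on the cubic $F$ combined with the one-parameter scaling analysis of $\varphi$.
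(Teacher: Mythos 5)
Your proposal is correct in substance, but be aware that this paper contains no proof to compare against: Theorem \ref{thm.5.10} is imported without proof from \cite{Masaki}*{Theorem 5.1}, as the paper says explicitly at the start of Section \ref{S:gs}. That said, your route is the natural (and, given the structure of \cite{Masaki}, almost certainly the intended) one: reduce to the scalar sharp Gagliardo--Nirenberg inequality via the pointwise bound $g(\bm{z})\le g_{\max}|\bm{z}|^4$ and the diamagnetic-type inequality $\|\nabla|\uv|\|_{L^2}\le\|\nabla\uv\|_{(L^2)^N}$, obtaining $G(\uv)M(\uv)\le A\,(H(\uv)M(\uv))^{3/2}$ with $A=g_{\max}C_*$ and with equality at $\Qv$ thanks to \eqref{E:Gomega} (since ${\bf w}\in T_0$ makes all three inequalities in the chain equalities), then run the Holmer--Roudenko-style one-variable analysis. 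I checked your constants: Nehari plus Pohozaev give $3G(\Qv)=2H(\Qv)$, hence $K(\Qv)=0$, $E(\Qv)M(\Qv)=\tfrac13 y_Q$, and $y_Q=4/(9A^2)$; the claim $y_\delta^-<(1-\delta)y_Q$ reduces to $F((1-\delta)y_Q)>(1-\delta)F(y_Q)$, i.e.\ to $(1-\delta)^{1/2}<1$; the scaling argument with $\varphi(\lambda)=\lambda^2 y-\lambda^3 G(\uv)M(\uv)$ and the a priori bound $\varphi(\lambda_Q)\ge F(y_Q)$ correctly forces the dichotomy; and your quantitative bounds yield $K\ge 2\bigl(1-(1-\delta)^{1/2}\bigr)H$ in case (1) and $KM\le-\delta\,H(\Qv)M(\Qv)$ in case (2), so $\tilde\delta(\delta)=\delta$ works for both, as required.

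Two minor points you should patch for completeness. First, $\lambda_*=2H/(3G)$ presupposes $G(\uv)>0$; if $G(\uv)\le 0$ then $K(\uv)=2H-3G\ge 2H>0$ automatically and $y(\uv)\le e(\uv)\le(1-\delta)y_Q/3$, so case (1) holds trivially and case (2) is vacuous --- state this proviso before invoking monotonicity of $\varphi$. Second, equality in your system Gagliardo--Nirenberg inequality holds on the full scaling orbit $\{\mu\,{\bf w}\,Q_{\omega,g_{\max}}(\cdot-y)\}$, not ``precisely at $\mathcal{G}$''; this is harmless since your argument only uses equality at $\Qv$ itself. Finally, note that your use of Euler's identity and of the pointwise bound by $g_{\max}$ relies on $g$ being homogeneous of bidegree $(2,2)$, which follows from the gauge condition \eqref{gauge condition} together with the cubicity of $F_j$, and is exactly what the paper itself invokes in \eqref{E:6.15pf1}.
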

By combining  the characterization and the analysis in Section \ref{S:lwp}, we have the following global existence 
result. 
\begin{corollary}[Global well-posedness below ground states]\label{cor.5.11}
  Let $\uo \in \Hsp \setminus \{0\}$ satisfy
  \begin{align*}
    E(\uo)M(\uo) \le (1-\delta) E(\Qv)M(\Qv),
  \end{align*}
  where some $\delta \in (0,1)$.  Then, \eqref{condition2} is equivalent to
  $K(\uo) >0$.
  Moreover, if $K(\uo) >0$ is satisfied then $\uv(t)$ is a  global  $H^1$-solution to \eqref{E:gNLS} satisfying 
  \begin{align*}
    & \sup_{t \in \R} H(\uv(t)) M(\uv(t)) 
    \le (1-\delta) H(\Qv)M(\Qv) ,\\
    & \inf_{t \in \R} K(\uv(t)) 
    \ge \tfrac{\tilde{\delta}}{2} \inf_{t \in \R} \Xnorm{\nabla \uv}{(L^2)^N}^2, 
  \end{align*}
  where $\tilde{\delta}$ is as in Theorem $\ref{thm.5.10}$. 
\end{corollary}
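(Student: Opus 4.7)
The plan is to reduce everything to Theorem~\ref{thm.5.10} (the variational dichotomy in terms of the sign of $K$) combined with conservation of $M$ and $E$ (Theorem~\ref{thm.3.3}), continuity of the solution in $\Hsp$, and the blowup alternative (Proposition~\ref{prop.3.10}).

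First, I would establish the equivalence of \eqref{condition2} and $K(\uo)>0$. Because $E(\uo)M(\uo)\le (1-\delta)E(\Qv)M(\Qv)$, Theorem~\ref{thm.5.10} applies to $\uo$ and forces $K(\uo)\neq 0$. If $K(\uo)>0$, part~(1) gives $H(\uo)M(\uo)<(1-\delta)H(\Qv)M(\Qv)$; since $HM=\tfrac14\|\nabla\cdot\|^2\|\cdot\|^2$, taking square roots yields \eqref{condition2} (with extra slack). Conversely, if \eqref{condition2} holds, then $K(\uo)<0$ is ruled out, because part~(2) would give $H(\uo)M(\uo)>H(\Qv)M(\Qv)$, directly contradicting \eqref{condition2}; hence $K(\uo)>0$.

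Second, assume $K(\uo)>0$ and let $\uv$ be the maximal-lifespan solution with interval $\IM$. By conservation, $E(\uv(t))M(\uv(t))=E(\uo)M(\uo)\le (1-\delta)E(\Qv)M(\Qv)$ for every $t\in\IM$, so Theorem~\ref{thm.5.10} applies at each $t$, giving $K(\uv(t))\neq 0$. Since $\uv\in C(\IM;\Hsp)$ and $K\colon\Hsp\to\R$ is continuous, the map $t\mapsto K(\uv(t))$ is continuous; being nonvanishing and positive at $t_0$, it remains positive throughout $\IM$. Invoking part~(1) of Theorem~\ref{thm.5.10} at each $t$ then gives the uniform bound
\[
  H(\uv(t))M(\uv(t))<(1-\delta)H(\Qv)M(\Qv)\qquad\forall t\in\IM,
\]
which, together with the conservation of $M(\uv)$, yields a uniform bound on $\|\nabla\uv(t)\|_{(L^2)^N}$ and hence on $\|\uv(t)\|_{\Hsp}$. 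The blowup alternative (Proposition~\ref{prop.3.10}) then forces $\IM=\R$, so $\uv$ is global. The first displayed inequality of the corollary is exactly the bound just obtained, and the lower bound on $K$ follows immediately from Theorem~\ref{thm.5.10}(1), since at each $t$
\[
  K(\uv(t))\ge \tilde\delta H(\uv(t))=\tfrac{\tilde\delta}{2}\|\nabla\uv(t)\|_{(L^2)^N}^2,
\]
and taking the infimum in $t$ gives the stated estimate.

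Nothing here is really hard: the potential-well/variational heavy lifting is already packaged into Theorem~\ref{thm.5.10}, and the local theory of Section~\ref{S:lwp} provides both the continuity in time and the blowup alternative. The only point requiring a little care is the continuity-plus-nonvanishing argument that keeps $K(\uv(t))$ trapped on the positive side of the potential well throughout $\IM$, which is the mechanism that converts the variational information at $t_0$ into a statement for all $t$.
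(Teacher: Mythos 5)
Your proof is correct and follows exactly the route the paper intends: the authors give no written proof of Corollary \ref{cor.5.11}, stating only that it follows ``by combining the characterization and the analysis in Section \ref{S:lwp},'' which is precisely your combination of Theorem \ref{thm.5.10} with the conservation laws, the continuity of $t\mapsto K(\uv(t))$ on the connected maximal interval, and the blowup alternative. The only (trivial) point you leave implicit is that $\uv(t)\neq 0$ for all $t$ by mass conservation, so Theorem \ref{thm.5.10} indeed applies at every time.
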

%

\section{Proof of the main theorem}\label{S:proof}

\subsection{Reformulation as a variational problem}
By means of Corollary \ref{cor.5.11}, Theorem \ref{main} is formulated as follows.
\begin{theorem}[]\label{thm.6.1}
  If $\uo \in \Hsp$ satisfy \eqref{condition1} and  $K(\uo) > 0 $.
 The global solution $\uv(t)$ given in Corollary \ref{cor.5.11} satisfies
$    \Xnorm{\uv}{S(\R)} < \infty.
  $
\end{theorem}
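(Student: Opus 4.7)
The plan is to follow the concentration-compactness/rigidity scheme of Kenig-Merle, adapted to the system setting as in Duyckaerts-Holmer-Roudenko. First I would define the critical threshold
\[
A_c := \sup\{A\ge0 \mid \text{every } \uo \text{ with } M(\uo)E(\uo)<A \text{ and } K(\uo)>0 \text{ yields a global scattering solution}\}.
\]
By Corollary \ref{cor.5.11}, $K(\uo)>0$ together with $M(\uo)E(\uo)$ small forces $H(\uo)M(\uo)$ small, so Proposition \ref{cor.3.8}(2) yields $A_c>0$; by definition $A_c\le M(\Qv)E(\Qv)$. The theorem reduces to ruling out $A_c<M(\Qv)E(\Qv)$.

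Assuming for contradiction that $A_c<M(\Qv)E(\Qv)$, I would construct a critical element. Take a sequence of global $H^1$-solutions $\uv_n$ with $K(\uv_n(0))>0$, $M(\uv_n)E(\uv_n)\searrow A_c$, and $\|\uv_n\|_{S(\R)}=\infty$. Apply the profile decomposition (Theorem \ref{thm.4.7}) to the sequence $\uv_n(0)$, producing linear profiles $\phiv_j$ with parameters $(\tn{j},\yn{j})$ and remainders $\rv{J}$. Attach to each linear profile a nonlinear profile $\bm{v}_j$ (either a solution with data $\phiv_j$ if $\tn{j}\equiv0$, or a wave operator via Theorem \ref{thm.3.13} if $\tn{j}\to\pm\infty$). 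By the orthogonality (ii)-(v) of Theorem \ref{thm.4.7} and property (vi), every nonlinear profile inherits $M(\bm{v}_j)E(\bm{v}_j)<A_c$ and $K(\bm{v}_j)>0$; hence each scatters by definition of $A_c$. Summing these scattering profiles and inserting the ansatz into the long-time perturbation (Theorem \ref{thm.3.14}) with the $S$-smallness of the tail $\UTinv{J}\rv{J}$ provided by (iv), one would deduce $\|\uv_n\|_{S(\R)}<\infty$, a contradiction, \emph{unless} exactly one profile survives, $J_0=1$. This surviving nonlinear profile is the critical solution $\uc(t)$: maximal-lifespan, $M(\uc)E(\uc)=A_c$, $K(\uc(t))>0$ (hence global by Corollary \ref{cor.5.11}), $\|\uc\|_{S(\R)}=\infty$, and its trajectory $\{\uc(t)\}_{t\in\R}$ is precompact in $\Hsp$ modulo spatial translations $x(t)\in\R^3$.

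Next, I would control $x(t)$. Since \eqref{E:gNLS} is mass-resonant (as noted after the statement of Theorem \ref{T:DHR}) and hence Galilei-invariant, one may reduce to the zero-momentum case $P(\uc)\equiv0$ without altering the mass-energy or the $K$-constraint. The precompactness modulo $x(t)$, combined with the conservation $M(\uc),P(\uc)$ and a standard argument of Duyckaerts-Holmer-Roudenko, then forces $x(t)/t\to0$ as $|t|\to\infty$. Finally, I apply the localized virial identity with a radial cutoff $\phi_R(x)=R^2\phi(x/R)$ approximating $|x|^2$: differentiating twice the localized mass variance and using \eqref{eq:1.1}-\eqref{eq:1.2} yields
\[
\frac{d^2}{dt^2}\int_{\R^3}\phi_R(x)\sum_{j=1}^N|u_{c,j}(t,x)|^2\,dx = 8\,K(\uc(t)) + \mathrm{Err}(R,t),
\]
where, by precompactness of $\{\uc(t-x(t))\}$ in $H^1$ and $|x(t)|\ll R$ for $R=R(T)$ chosen proportional to $T$ with a suitable sub-linear correction, $\sup_{|t|\le T}|\mathrm{Err}(R,t)|=o_R(1)$. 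Integrating twice on $[0,T]$ and applying the uniform lower bound $K(\uc(t))\ge\tilde\delta\, H(\uc(t))\ge c>0$ from Corollary \ref{cor.5.11} produces $\gtrsim T^2$ on the right, while Cauchy-Schwarz and $H^1$-boundedness bound the left by $o(T^2)$, yielding the desired contradiction.

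The main obstacle is the construction of the critical element, specifically verifying the orthogonality-based decoupling of nonlinear profiles and the applicability of Theorem \ref{thm.3.14} componentwise: the quartic coupling $g$ links the components of $\uv$, so the crucial orthogonality of the $S$-norms of distinct profiles must be checked directly for the $(L^4)^N$-valued Strichartz norm together with \eqref{NS esti.}. Once this vector adaptation of the scalar argument of \cite{Duyckaerts-Holmer-Roudenko} is in place, the remainder of the scheme transfers almost verbatim.
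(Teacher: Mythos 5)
Your proposal is correct and follows essentially the same concentration-compactness/rigidity scheme as the paper: the scattering threshold $A_c$, the critical element built from the profile decomposition and long-time perturbation (Proposition \ref{prop.6.5}), zero momentum via the Galilean boost (Theorem \ref{thm.6.13}), the control $y(t)/t\to0$ (Proposition \ref{prop.6.14}), and the truncated virial estimate (Proposition \ref{prop.6.15}). One small correction: the Galilean boost does \emph{not} leave the mass-energy unchanged --- when $P(\uc)\neq0$ the boost with $\xi_0=-P(\uc)/(2M(\uc))$ strictly lowers $M(\uv)E(\uv)$ while preserving non-scattering and $K>0$, and it is precisely this strict decrease, contradicting the minimality of $A_c$, that yields $P(\uc)=0$ in the paper, rather than a cost-free reduction to the zero-momentum case.
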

\begin{definition}
For $a>0$, we define 
\[
	L(a) := \sup \Xnorm{\uv}{S(\IM)} \quad \text{and} \quad   A_c := \sup \{  a \mid \, L(a) < \infty \},
\] where
the first supremum is taken over all maximal-lifespan $H^1$-solution $ \uv(t)$ satisfying
$M(\uv)E(\uv) \le a$ and $K(\uv(t_0)) \ge 0$ at some $t_0 \in I_{\max}$.
\end{definition}
Let us collect properties of $L$ and $A_c$.
\begin{proposition}[]\label{prop.6.2}
Pick $\Qv \in \mathcal{G}$.
\begin{enumerate}
 \item  $L\colon [0, \infty) \rightarrow [0,\infty]$ is nondecreasing.
 \item  $L(M(\Qv)E(\Qv)) = \infty$.
 \item  $0 <A_c \le M(\Qv)E(\Qv)$.
 \item  $L$ is continuous on $[0,A_c)$. 
 \end{enumerate}
\end{proposition}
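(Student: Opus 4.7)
My plan is to dispatch (1)--(3) quickly and focus on (4). Part (1) is immediate: enlarging $a$ enlarges the admissible class over which the supremum is taken, so $L(a)\le L(b)$ for $a\le b$. For (2), I will take the standing wave $\uv(t,x)=e^{i\omega t}\Qv(x)$ with $\Qv\in\mathcal{G}_\omega$ and verify $K(\Qv)\ge 0$. Differentiating \eqref{gauge condition} at $\theta=0$ together with the Euler identity for the real degree-$4$ polynomial $g$ gives $\sum_j\overline{z_j}\,\partial_{\overline{z_j}} g=2g$. Pairing $-\Delta\Phi_j+\omega\Phi_j=F_j(\Qv)$ with $\overline{\Phi_j}$, summing, and taking real parts then yields the Nehari identity $H(\Qv)+\omega M(\Qv)=2G(\Qv)$; pairing instead with $x\cdot\nabla\overline{\Phi_j}$ and using $\sum_j 2\Re(F_j\,\partial_{x_i}\overline{\Phi_j})=\tfrac12\partial_{x_i}g(\Qv)$ with integration by parts on $\R^3$ yields the Pohozaev identity $H(\Qv)+3\omega M(\Qv)=3G(\Qv)$. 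Subtracting gives $G(\Qv)=2\omega M(\Qv)$ and $H(\Qv)=3\omega M(\Qv)$, hence $K(\Qv)=0$; the soliton is therefore admissible and $\|\uv\|_{S(\R)}=\|\Qv\|_{L^4}\,\|1\|_{L^8(\R)}=\infty$, proving (2). For (3), $K(\uo)\ge 0$ yields $G(\uo)\le\tfrac23 H(\uo)$, so $E(\uo)\ge\tfrac13 H(\uo)$ and $M(\uo)H(\uo)\le 3a$; hence $\|\uo\|_{(L^2)^N}\|\nabla\uo\|_{(L^2)^N}= 2\sqrt{MH}\le 2\sqrt{3a}$ drops below the small-data threshold $\delta_2$ of Proposition \ref{cor.3.8} once $a<\delta_2^2/12$, and that proposition furnishes $\|\uv\|_S\le C\sqrt a<\infty$. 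Thus $L(a)<\infty$ for small $a$, so $A_c>0$; the bound $A_c\le M(\Qv)E(\Qv)$ is immediate from (2).

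For (4), the case $a_0=0$ is covered by the small-data bound just used (and left-continuity is vacuous there). So fix $a_0\in(0,A_c)$, whence $L(a_0)<\infty$. The key device is the rescaling $\utilde_0=\lambda\,\uv(t_0)$ combined with long-time perturbation (Theorem \ref{thm.3.14}). Direct computation gives
\[
	f(\lambda):=M(\utilde)E(\utilde)=\lambda^4 M(\uv)H(\uv)-\lambda^6 M(\uv)G(\uv),\qquad f'(1)=2M(\uv)K(\uv(t_0)),
\]
and an elementary algebraic check shows $K(\utilde_0)\ge 0$ for all $\lambda\in(0,1]$ (the sign works out regardless of the sign of $G$). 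For right-continuity, suppose for contradiction there exist $b_n\searrow a_0$ and admissible solutions $\uv_n$ with $M(\uv_n)E(\uv_n)\le b_n$ and $\|\uv_n\|_{S(\R)}>L(a_0)+\tfrac{\epsilon}{2}$; necessarily $M(\uv_n)E(\uv_n)>a_0$. Both $M\cdot E$ and $\|\cdot\|_{S(\R)}$ are invariant under $\uv\mapsto\mu\uv(\mu^2\cdot,\mu\,\cdot)$, so I normalize $M(\uv_n)=1$, which bounds $\|\uv_n(t_{0,n})\|_{H^1}^2=2+2H(\uv_n)\le 2+6b_1$ uniformly. Because $b_n<A_c\le M(\Qv)E(\Qv)$ with a uniform gap, Theorem \ref{thm.5.10} furnishes $K(\uv_n)\ge\tilde\delta H(\uv_n)$ with uniform $\tilde\delta>0$; combining with $K=3E-H$ and $E=M(\uv_n)E(\uv_n)>a_0$ yields $K(\uv_n)\ge 3a_0\tilde\delta/(1+\tilde\delta)>0$ uniformly, so $f_n'(1)\ge c_0>0$ and the IVT produces $\lambda_n\in(0,1)$ with $f_n(\lambda_n)=a_0$ and $1-\lambda_n=O(b_n-a_0)\to 0$.

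Let $\utilde_n$ solve \eqref{E:gNLS} with data $\lambda_n\uv_n(t_{0,n})$; admissibility at level $a_0$ gives $\|\utilde_n\|_{S(\R)}\le L(a_0)$. The perturbation
\[
	\ep_n:=\|U(\cdot-t_{0,n})(\uv_n(t_{0,n})-\utilde_n(t_{0,n}))\|_{S(\R)}\le(1-\lambda_n)\|U(\cdot-t_{0,n})\uv_n(t_{0,n})\|_{S(\R)}\lesssim (1-\lambda_n)\|\uv_n(t_{0,n})\|_{H^1}\to 0
\]
via the Strichartz bound $\|U\phi\|_{S(\R)}\lesssim\|\phi\|_{H^1}$. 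Theorem \ref{thm.3.14} with reference $\utilde_n$ and $\bm{\Lambda}\equiv 0$ then gives $\|\uv_n-\utilde_n\|_{S(\R)}=O(\ep_n)\to 0$, whence $\|\uv_n\|_{S(\R)}\le L(a_0)+o(1)$, the desired contradiction. Left continuity is symmetric: given an admissible $\uv$ at level $a_0$ with $\|\uv\|_S\ge L(a_0)-\epsilon/2$, a slight shrink by $\lambda<1$ close to $1$ produces an admissible $\utilde$ with $M(\utilde)E(\utilde)<a_0$ and, by the same long-time perturbation, $\|\utilde\|_{S(\R)}\ge L(a_0)-\epsilon$, yielding $\lim_{b\nearrow a_0}L(b)\ge L(a_0)$. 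The main obstacle will be establishing the uniform lower bound $K(\uv_n)\ge c_0>0$; this rests essentially on the strict inequality $a_0<M(\Qv)E(\Qv)$ together with the variational characterization in Theorem \ref{thm.5.10}, and is precisely why the case $a_0=0$ must be treated separately via small data.
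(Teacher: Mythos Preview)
Your proof is correct and follows the same strategy the paper has in mind. Parts (1)--(3) match the paper's brief argument almost verbatim, with the extra service of actually deriving the Nehari/Pohozaev identities to verify $K(\Qv)=0$ in (2). For (4) the paper writes only ``(4) is shown by Theorem~\ref{thm.3.14}. We omit the detail''; your scaling device $\utilde_0=\lambda\,\uv(t_0)$ together with the quantitative lower bound $K(\uv_n)\ge 3\tilde\delta a_0/(1+\tilde\delta)$ (obtained from Theorem~\ref{thm.5.10} and the identity $K=3E-H$) is precisely the standard mechanism for turning long-time perturbation into continuity of $L$, and all the steps --- the uniform $H^1$ bound after mass-normalization, the IVT selection of $\lambda_n$ with $1-\lambda_n=O(b_n-a_0)$, the Strichartz bound $\|U(\cdot)\phi\|_{S(\R)}\lesssim\|\phi\|_{\dot H^{1/2}}\le\|\phi\|_{H^1}$, and the preservation of $K\ge 0$ under shrinking --- check out. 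The only point worth making explicit is that $\|\utilde_n\|_{S(I_{\max})}\le L(a_0)<\infty$ forces $I_{\max}=\R$ by the blowup alternative (Proposition~\ref{prop.3.10}), which you use implicitly when invoking Theorem~\ref{thm.3.14} on all of $\R$.
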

\begin{proof}[Proof of Proposition \ref{prop.6.2}] 
  (1) is obvious by definition. 
  (2) is also immediate by the fact that ground states are included in the spremum in the definition  of $L(M(\Qv)E(\Qv))$
  and that a ground state is a non-scattering solution.
  Then, the second inequality of (3) follows from (1) and (2).
  The first inequality follows from (2) of Corollary \ref{cor.3.8} and Corollary \ref{cor.5.11}.
(4) is shown by Theorem \ref{thm.3.14}. We omit the detail.
\end{proof}

Then, one sees that Theorem \ref{thm.6.1} follows from the following theorem.

\begin{theorem}[]\label{thm.6.3}
  $A_c=M(\Qv)E(\Qv)$ holds. 
\end{theorem}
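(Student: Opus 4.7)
By Proposition \ref{prop.6.2}(3) we already have $A_c \le M(\Qv)E(\Qv)$, so the plan is to assume for contradiction that $A_c < M(\Qv)E(\Qv)$ and run the Kenig--Merle concentration-compactness/rigidity scheme: first extract a \emph{critical element} $\uc$, a global non-scattering $H^1$-solution with $M(\uc)E(\uc)=A_c$, $K(\uc(t))>0$, and an orbit that is precompact in $\Hsp$ modulo spatial translation $x(t)$; then rule out the existence of such $\uc$ by a truncated virial argument after gauging away the momentum.

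For the first step, take a sequence of maximal-lifespan solutions $\uv_n$ with $K(\uv_n(0))>0$, $M(\uv_n)E(\uv_n)\downarrow A_c$, and $\Xnorm{\uv_n}{S(I_{\max,n})}\to\infty$; after a time translation we may assume the $S$-norm on $[0,\infty)$ and on $(-\infty,0]$ both diverge. Apply the profile decomposition (Theorem \ref{thm.4.7}) to $\br{\uv_n(0)}$ to write $\uv_n(0)=\sum_{j=1}^J \UT{j}\phiv_j+\rv{J}$, associate to each $\phiv_j$ its nonlinear profile $\vv_j$ (via Theorem \ref{thm.3.2} when $t_n^j\equiv 0$ and via Theorem \ref{thm.3.13} when $t_n^j\to\pm\infty$), and use the asymptotic decoupling of mass, energy and $G$ in (v)--(vi) together with Theorem \ref{thm.5.10} to check that $K(\vv_j)\ge 0$ and $M(\vv_j)E(\vv_j)\le A_c$ for every $j$. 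If two or more profiles were nonzero, then each $\vv_j$ would satisfy $M(\vv_j)E(\vv_j)\le A_c-\eta$ for some $\eta>0$, hence scatter with $\Xnorm{\vv_j}{S(\R)}\le L(A_c-\eta)<\infty$; the mutual orthogonality in (ii) then lets the long-time perturbation Theorem \ref{thm.3.14} glue the $\vv_j$'s into an approximate solution to bound $\Xnorm{\uv_n}{S}$ uniformly, a contradiction. Hence exactly one profile $\phiv_1$ survives with $\rv{1}\to 0$ in $\Hsp$; its nonlinear profile is the critical element $\uc$, which is a global $H^1$-solution with $M(\uc)E(\uc)=A_c$, $\Xnorm{\uc}{S([0,\infty))}=\Xnorm{\uc}{S((-\infty,0])}=\infty$, and orbit $\br{\uc(t,\cdot-x(t))\mid t\in\R\}$ precompact in $\Hsp$ for a suitable $x\colon\R\to\R^3$.

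For the rigidity step, the plan is to show $\uc\equiv 0$, which contradicts $M(\uc)E(\uc)=A_c>0$. First, using conservation of the momentum $P(\uv)$ (Theorem \ref{thm.3.3}) together with the Galilean invariance that holds under the mass-resonance built into \eqref{E:gNLS}, one performs a Galilean boost to reduce to $P(\uc)=0$. Second, the precompactness of the orbit modulo translation and a standard argument (choose $R_\varepsilon$ so that $\int_{|x-x(t)|>R_\varepsilon}(|\nabla\uc|^2+|\uc|^2)dx<\varepsilon$ uniformly in $t$; combine with $P(\uc)=0$ and a Duhamel/continuity estimate) gives $x(t)/t\to 0$ as $|t|\to\infty$. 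Third, pick a smooth radial cutoff with $\phi(r)=r^2$ for $r\le 1$ and $\phi$ constant for $r\ge 2$, set $V_R(t):=\int R^2\phi(|x|/R)\sum_j|u_{c,j}(t,x)|^2\,dx$, and compute the localized virial; the assumption $P(\uc)=0$ ensures $|V_R'(t)|\lesssim R\,\Xnorm{\uc}{(L^2)^N}\Xnorm{\nabla\uc}{(L^2)^N}$, while
\begin{equation*}
V_R''(t)=8K(\uc(t))+\mathrm{Err}_R(t),\qquad |\mathrm{Err}_R(t)|\le C\!\!\int_{|x-x(t)|>R-|x(t)|}\!\!\!(|\nabla\uc|^2+|\uc|^4)\,dx.
\end{equation*}
By Corollary \ref{cor.5.11}, $K(\uc(t))\ge \tfrac{\tilde\delta}{2}\Xnorm{\nabla\uc(t)}{(L^2)^N}^2\ge c_0>0$ uniformly in $t$, while the precompactness modulo $x(t)$ together with $x(t)=o(t)$ lets us pick $R$ so large that $|\mathrm{Err}_R|\le 4c_0$ for all $t$ in a large window $[0,T]$. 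Integrating $V_R''\ge 4c_0$ on $[0,T]$ gives $V_R(T)\ge 2c_0 T^2+O(T)$, whereas $|V_R(T)-V_R(0)|\le T\sup_t|V_R'(t)|\lesssim RT$; choosing $T\gg R$ forces a contradiction, so $\uc\equiv 0$.

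The main obstacle will be the translation-parameter control in the rigidity step: proving $x(t)=o(t)$ from compactness plus $P(\uc)=0$ requires the Galilean invariance provided by the mass-resonance structure and a careful soft analysis argument, and one must verify that the cross terms in $V_R''$ arising from the non-diagonal structure of $\bm{F}$ still combine, via the Pohozaev-type identity implicit in the definition of $K$, into exactly $8K(\uc)$ up to the spatially localized error $\mathrm{Err}_R$; all other estimates are standard adaptations of the scalar argument.
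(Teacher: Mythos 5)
Your plan is correct and follows essentially the same route as the paper: extraction of a critical element via the profile decomposition plus long-time perturbation, $P(\uc)=0$ forced by a Galilean boost together with the minimality of $A_c$, the control $y(t)=o(t)$ from the truncated first moment, and a localized virial contradiction resting on the identities $4\sum_j\Re(F_j\overline{u_j})=4g$ and $4\sum_j\Re(F_j\overline{\nabla u_j})=\nabla g$. The only cosmetic differences are that you integrate the virial identity twice (quadratic growth of $V_R$, where the paper integrates once and plays linear growth of $V_R'$ against $|V_R'|\lesssim R$, with $R=R_0+\eta t_1$ and $\eta$ small absorbing your ``$T\gg R$'' step), and your uniform bound $K(\uc(t))\ge c_0>0$ needs the one-line remark that the conserved positive energy bounds $H(\uc(t))$ from below, which is the paper's inequality \eqref{eq:6.8}.
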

In the sequel, we prove Theorem $\ref{thm.6.3}$ by contradiction. To this end, we suppose that this fails, i.e.,
\begin{align}\label{eq:6.1}
  A_c<M(\Qv)E(\Qv).
\end{align}

\subsection{Existence of a critical element}

We first show that, under the assumption \eqref{eq:6.1}, there exists an optimizer to $A_c$.
The optimizer is the solution which is often called a \emph{critical element} or \emph{a minimal blow-up solution.}
\begin{theorem}[]\label{thm.6.4}
  Under the assumption of $\eqref{eq:6.1}$, 
  there exists an $H^1$-global solution $\uc(t)$ to \eqref{E:gNLS} such that 
  $\Xnorm{\uc}{S([0,\infty))} = \Xnorm{\uc}{S((-\infty,0])} = \infty$, $\sup_{t} M(\uc)H(\uc) < M(\Qv)H(\Qv),$
  and
$M(\uc)E(\uc) = A_c$.
\end{theorem}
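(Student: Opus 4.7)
The plan is the standard Kenig–Merle concentration-compactness extraction, adapted to the system setting via the profile decomposition of Theorem \ref{thm.4.7} and the long-time perturbation of Theorem \ref{thm.3.14}.

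By definition of $A_c$, we can choose a sequence of maximal-lifespan $H^1$-solutions $\bm{u}_n$ with $M(\bm{u}_n)E(\bm{u}_n) \searrow A_c$, $K(\bm{u}_n(0)) > 0$, and $\|\bm{u}_n\|_{S(I_{\max,n})} = \infty$. By time-translating each $\bm{u}_n$ so that $0$ splits its $S$-norm into two infinite halves, we may assume $\|\bm{u}_n\|_{S([0,\infty)\cap I_{\max,n})} = \|\bm{u}_n\|_{S((-\infty,0]\cap I_{\max,n})} = \infty$. Corollary \ref{cor.5.11} together with \eqref{eq:6.1} gives $\sup_n \|\bm{u}_n(0)\|_{(H^1)^N} < \infty$, so we may apply Theorem \ref{thm.4.7} to the bounded sequence $\{\bm{u}_n(0)\}$ to extract profiles $\bm{\phi}_j$, parameters $(t_n^j, y_n^j)$, and remainders $\bm{r}_n^J$.

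Next I associate to each profile a nonlinear profile $\bm{v}_j$: if $t_n^j \equiv 0$, let $\bm{v}_j$ be the maximal-lifespan $H^1$-solution with $\bm{v}_j(0) = \bm{\phi}_j$; if $t_n^j \to \pm\infty$, let $\bm{v}_j$ be the solution given by the final state problem (Theorem \ref{thm.3.13}) so that $\|U(-t)\bm{v}_j(t) - \bm{\phi}_j\|_{H^1} \to 0$ as $t \to \pm\infty$. From the Pythagorean-type identity (v) and (vi) of Theorem \ref{thm.4.7}, together with the mass/energy of the asymptotic profiles coinciding with $M(\bm{\phi}_j)$ and $\tfrac12\|\nabla\bm{\phi}_j\|^2$, one obtains for each $J \le J_0$
\begin{align*}
\sum_{j=1}^{J} M(\bm{v}_j)E(\bm{v}_j) \le \lim_{n\to\infty} M(\bm{u}_n)E(\bm{u}_n) = A_c,
\end{align*}
where one must use $E(\bm v_j)\ge 0$, which follows from the variational bounds of Theorem \ref{thm.5.10} applied to each profile. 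A parallel inequality controls $\|\bm v_j(0)\|\,\|\nabla\bm v_j(0)\| < \|\bm{Q}\|\,\|\nabla\bm{Q}\|$ via (v), so each $\bm v_j$ sits in the subthreshold regime and Corollary \ref{cor.5.11} applies to it.

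I then argue by dichotomy. If every $\bm v_j$ satisfied $M(\bm v_j)E(\bm v_j) < A_c$, then by definition of $A_c$ each $\bm v_j$ would scatter, so $\|\bm v_j\|_{S(\R)} < \infty$. Combining the scattering bounds with the asymptotic orthogonality (ii) of $(t_n^j,y_n^j)$, the approximate solution
\begin{align*}
\tilde{\bm u}_n(t) := \sum_{j=1}^{J} \bm v_j\bigl(t+t_n^j,\, x - y_n^j\bigr) + U(t)\bm r_n^J
\end{align*}
has $S$-norm uniformly bounded in $n,J$, almost solves \eqref{E:gNLS} with error going to zero (as $J\to J_0$ and $n\to\infty$, using property (iv) of Theorem \ref{thm.4.7} to dispose of the linear remainder through the small-data theory of Proposition \ref{cor.3.8}), and matches $\bm u_n(0)$ at $t=0$. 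Theorem \ref{thm.3.14} then forces $\|\bm u_n\|_{S(\R)} < \infty$ for large $n$, contradicting the choice of $\bm u_n$. Hence there must be at least one profile, say $\bm v_1$, with $M(\bm v_1)E(\bm v_1) = A_c$; the inequality above then shows $J_0 = 1$ and $\bm r_n^1 \to 0$ strongly in $H^1$. A further application of the perturbation theorem rules out $t_n^1 \to \pm\infty$ (otherwise $\bm u_n$ would inherit a finite $S$-norm on one half-line from $\bm v_1$'s scattering there), forcing $t_n^1 \equiv 0$ after relabeling.

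I set $\bm u_c := \bm v_1$. It is global by Corollary \ref{cor.5.11}, satisfies $M(\bm u_c)E(\bm u_c) = A_c$ with the asserted kinetic energy bound, and must blow up in $S$-norm on both half-lines, since scattering on either side combined with the extraction of $\bm u_c$ as the single profile of $\bm u_n$ would again yield $\|\bm u_n\|_{S(\R^\pm)} < \infty$ via Theorem \ref{thm.3.14}. The main obstacle in this argument is the bookkeeping that ensures the subthreshold constraints $K(\bm v_j(0))>0$ and $M(\bm v_j)E(\bm v_j) \ge 0$ hold for every profile, so that the mass-energy decomposition can be summed and the hypothesis of $A_c$ can be invoked on each $\bm v_j$; this is precisely where the variational tool Theorem \ref{thm.5.10} is used profile by profile, together with property (v) of the profile decomposition applied to both $A = \mathrm{Id}$ and $A = \nabla$.
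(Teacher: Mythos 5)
Your overall architecture---profile decomposition (Theorem \ref{thm.4.7}) applied to a minimizing sequence, nonlinear profiles built via the initial-value and final-state problems (Theorem \ref{thm.3.13}), mass--energy decoupling with positivity supplied by the variational bounds, and the long-time perturbation (Theorem \ref{thm.3.14}) forcing a single profile with $t_n^1\equiv 0$---is exactly the paper's proof (Proposition \ref{prop.6.5} with Lemmas \ref{lem.6.6}--\ref{lem.6.8}). However, your opening extraction step has a genuine gap. From $A_c=\sup\{a \mid L(a)<\infty\}$ you claim a sequence of solutions $\bm{u}_n$ with $M(\bm{u}_n)E(\bm{u}_n)\searrow A_c$ and $\Xnorm{\bm{u}_n}{S(I_{\max,n})}=\infty$. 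This does not follow: $L(a_n)=\infty$ for $a_n>A_c$ only says that the supremum of the (each possibly finite) $S$-norms over the admissible class is infinite, and this may be realized by a sequence of \emph{scattering} solutions with finite but unbounded $S$-norms. The existence of a single non-scattering solution at mass-energy level near $A_c$ is precisely what Theorem \ref{thm.6.4} is asserting, so positing it at the outset is circular. The problem is compounded by your next move: even if a solution with infinite total $S$-norm existed, one cannot in general ``time-translate so that $0$ splits its $S$-norm into two infinite halves''---infinitude of $\Xnorm{\bm{u}}{S(I_{\max})}$ only forces \emph{one} half-line norm to be infinite (the solution may scatter in the other time direction), and no choice of time origin changes which one.

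The paper's Proposition \ref{prop.6.5} is formulated exactly to sidestep both issues: one takes \emph{global} solutions $\bm{w}_n$ (global by Corollary \ref{cor.5.11}) with $M(\bm{w}_n)E(\bm{w}_n)\to A_c$ and with finite norms $\Xnorm{\bm{w}_n}{S([0,\infty))}$, $\Xnorm{\bm{w}_n}{S((-\infty,0])}$ that merely \emph{diverge} as $n\to\infty$. Such a sequence exists because $L(a)\to\infty$ as $a\nearrow A_c$ (using the continuity of $L$ from Proposition \ref{prop.6.2}(4): if $L$ stayed bounded as $a\nearrow A_c$, Theorem \ref{thm.3.14} would extend finiteness of $L$ slightly past $A_c$), and the two-sided divergence is arranged by translating each solution to a ``median'' time splitting the $L^8_t$-mass of its finite $S$-norm evenly---a device that works for finite norms, unlike your splitting. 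With this corrected normalization the remainder of your argument goes through essentially verbatim: in the dichotomy step the perturbation theorem now yields a uniform bound $\Xnorm{\bm{w}_n}{S(\R)}\le C$ contradicting divergence (rather than contradicting a literal infinite norm), and the two-sided non-scattering of $\bm{u}_c=\bm{v}_1$ is then \emph{derived}, as you indicate, by one more application of Theorem \ref{thm.3.14} together with the strong $H^1$ convergence of the translated data. One further small point: for profiles with $t_n^j\to\pm\infty$ the admissibility condition $K\ge 0$ needed to invoke $L$ is not obtained from Theorem \ref{thm.5.10} at $t=0$ but from the asymptotics, since $G(\bm{v}_j(t))\to 0$ gives $K(\bm{v}_j(t))\to 2H(\bm{\phi}_j)>0$ at the scattering end (this is the role of $\tilde{E}$ in the paper's Lemma \ref{lem.6.7}).
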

This theorem is shown by applying the following convergence result to an optimizing sequence to $A_c$.

  \begin{proposition}[key convergence result]\label{prop.6.5}
    Let $\br{\wv_n}_n$ be a sequence of $H^1$-global solutions to \eqref{E:gNLS} satisfying the followings:   
    \begin{align}
      & M(\wv_n) E(\wv_n) \le A_c, \rightarrow A_c\ \ \ (n \to \infty) \label{eq:6.6a}
      ,\\
      & \sup_{n,t} M(\wv_n) H(\wv_n) < M(\Qv)H(\Qv) \label{eq:6.6b}
      ,\\
      & \sup_{n}\Xnorm{\wv_n (0)}{H^1} < \infty \label{eq:6.6c}
      ,\\
      & \Xnorm{\wv_n}{S([0,\infty))} \rightarrow \infty
      ,\, \Xnorm{\wv_n}{S((-\infty,0])} \rightarrow \infty \ \ \ (n \to \infty) \label{eq:6.6d}
      .
    \end{align}
    Then there exist a subsequence of $n$, $\br{y_n} \subset \R^3$, and $\wv_{\infty , 0} \in \Hsp$ 
    such that 
    \begin{align}\label{eq:6.7o}
      T_{y_n}^{-1}\wv_n(0)\rightarrow \wv_{\infty , 0} \ \ \ \text{in} \ \ \Hsp
    \end{align}
    as $n \to \infty$.
    In addition, a solution $\wv_{\infty}$ to \eqref{E:gNLS} with the data
  $\wv_{\infty}(0)=\wv_{\infty , 0}$ exists globally and satisfies the followings:
    \begin{align}
      & M(\wv_{\infty}) E(\wv_{\infty}) = A_c \label{eq:6.7a}
      ,\\
      & \sup_{t} M(\wv_{\infty}) H(\wv_{\infty}) < M(\Qv)H(\Qv) \label{eq:6.7b}
      ,\\
      & \sup_{t}\Xnorm{\wv_{\infty} (t)}{H^1} < \infty \label{eq:6.7c}
      ,\\
      & \Xnorm{\wv_{\infty}}{S([0,\infty))}=\Xnorm{\wv_{\infty}}{S((-\infty,0])} = \infty \label{eq:6.7d}
      .
    \end{align}
  \end{proposition}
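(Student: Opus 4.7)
The plan is to execute the concentration-compactness strategy of Kenig--Merle on the initial-data sequence. First I apply Theorem~\ref{thm.4.7} to $\{\bm{w}_n(0)\}$, which is bounded in $\Hsp$ by \eqref{eq:6.6c}, extracting profiles $\phiv_j$, parameters $(\yn{j},\tn{j})$, and remainders $\rv{J}$. To each profile I associate a \emph{nonlinear profile} $\bm{v}_j$, a solution to \eqref{E:gNLS}: if $\tn{j}\equiv 0$, let $\bm{v}_j$ be the maximal $H^1$-solution with data $\phiv_j$ at time $0$; if $\tn{j}\to\pm\infty$, use the final state problem (Theorem~\ref{thm.3.13}, in the appropriate direction, together with global existence from Corollary~\ref{cor.5.11}) to produce a global $\bm{v}_j$ with $\Xnorm{\bm{v}_j(\tau)-U(\tau)\phiv_j}{\Hsp}\to 0$ as $\tau\to\pm\infty$. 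In the latter case the dispersive decay $\Xnorm{U(\tau)\phiv_j}{L^4}\to 0$ gives $M(\bm{v}_j)=M(\phiv_j)$ and $E(\bm{v}_j)=H(\phiv_j)$.

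Next, \eqref{eq:6.6b} combined with Theorem~\ref{thm.5.10} yields $K(\bm{w}_n(t))>0$ at every time. Weak lower semicontinuity of $M$ and $H$ transfers $M(\phiv_j)H(\phiv_j)<M(\Qv)H(\Qv)$ to each profile, and a second application of Theorem~\ref{thm.5.10}, at time $0$ when $\tn{j}\equiv 0$ and at large $|\tau|$ otherwise (where $G(\bm{v}_j(\tau))$ is negligible), shows $K(\bm{v}_j)>0$ somewhere on its time axis. Thus each $\bm{v}_j$ is admissible in the definition of $A_c$, so $M(\bm{v}_j)E(\bm{v}_j)\le A_c$. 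Combining the mass/gradient decoupling (v), the nonlinear decoupling (vi), and the smallness (iv) (which gives $G(\rv{J})\lesssim \Xnorm{\rv{J}}{L^4}^{4}\to 0$), the identity $\lim_n M(\bm{w}_n)E(\bm{w}_n)=A_c$ expands as
\[
	A_c \;=\; \Bigl(\sum_{j=1}^{J_0} M(\bm{v}_j) + \mu_\infty\Bigr)\Bigl(\sum_{j=1}^{J_0} E(\bm{v}_j) + h_\infty\Bigr),
\]
with $\mu_\infty,h_\infty\ge 0$ collecting the remainder contributions.

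The decisive step is to show $J_0=1$ and $\tn{1}\equiv 0$. If two profiles had positive $M(\bm{v}_j)$, the display above forces $M(\bm{v}_j)E(\bm{v}_j)<A_c$ strictly for each, hence $\Xnorm{\bm{v}_j}{S(\R)}<\infty$ by the definition of $A_c$. I would then assemble the approximate solution $\utilde_n:=\sum_{j=1}^J \bm{v}_j(\,\cdot+\tn{j},\,\cdot-\yn{j})+U(\cdot)\rv{J}$, use Proposition~\ref{prop.4.7.2} to show the cross terms $\bm{F}(\utilde_n)-\sum_j \bm{F}(\bm{v}_j)(\,\cdot+\tn{j},\,\cdot-\yn{j})$ vanish in $N(\R)$-norm as $n\to\infty$ and $J\to J_0$, and apply Theorem~\ref{thm.3.14} to deduce $\Xnorm{\bm{w}_n}{S(\R)}<\infty$ for large $n$, contradicting \eqref{eq:6.6d}. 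If $J_0=1$ but $\tn{1}\to+\infty$, the shifted profile $\bm{v}_1(\,\cdot+\tn{1},\,\cdot-\yn{1})$ has $S$-norm on $(-\infty,0]$ uniformly bounded by $\Xnorm{\bm{v}_1}{S(\R)}<\infty$, so the same perturbation controls $\Xnorm{\bm{w}_n}{S((-\infty,0])}$, again contradicting \eqref{eq:6.6d}; the case $\tn{1}\to-\infty$ is symmetric. \emph{The main technical hurdle} is the orthogonality bookkeeping for the quartic cross terms in $\bm{F}$, since the generic quartic $g$ couples up to four distinct components; the crucial cancellations must be extracted component-by-component from part~(ii) of the profile decomposition.

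Once $J_0=1$ and $\tn{1}\equiv 0$, the same perturbation reasoning rules out scattering of $\bm{v}_1$ in either direction, so $M(\bm{v}_1)E(\bm{v}_1)\ge A_c$; combined with $\le A_c$ and the expansion, equality forces $\mu_\infty=h_\infty=0$, whence $\rv{1}\to 0$ strongly in $\Hsp$. Setting $\bm{w}_{\infty,0}:=\phiv_1$ and $\bm{w}_\infty:=\bm{v}_1$ yields \eqref{eq:6.7o}. Then \eqref{eq:6.7a} is by construction; \eqref{eq:6.7b}--\eqref{eq:6.7c} follow from Corollary~\ref{cor.5.11} applied to $\bm{w}_\infty$ (global existence and the uniform bound $\sup_t MH<M(\Qv)H(\Qv)$, inherited from \eqref{eq:6.6b} via the strong convergence of the data); and \eqref{eq:6.7d} is the non-scattering already established.
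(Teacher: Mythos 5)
Your proposal is correct and follows essentially the same Kenig--Merle scheme as the paper's proof: profile decomposition of $\{\bm{w}_n(0)\}$, nonlinear profiles via Theorem \ref{thm.3.13}, mass/energy decoupling together with positivity of the asymptotic energies to force each profile strictly below $A_c$ when $J_0\ge 2$, long-time perturbation (Theorem \ref{thm.3.14}) with the orthogonality of parameters killing the quartic cross terms, then the reduction to $J_0=1$ with $t_n^1\equiv 0$, non-scattering of the single profile forcing $M\cdot E=A_c$, strong vanishing of the remainder, and Corollary \ref{cor.5.11} for \eqref{eq:6.7b}--\eqref{eq:6.7c}. The only deviations are cosmetic --- your product-of-sums expansion in place of the paper's paired inequalities \eqref{eq:6.12}--\eqref{eq:6.13}, and your half-line perturbation argument for the case $t_n^1\to\pm\infty$ where the paper simply observes that the linear evolution's $S$-norm vanishes on the relevant half-line --- and they do not change the substance.
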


  \begin{proof}[Proof of Proposition \ref{prop.6.5}] 
    The proof is similar to \cite{Holmer-Roudenko}*{Proposition 5.4}.
    By assumption $\eqref{eq:6.6c}$, the sequence $\br{\wzero}$ is bounded  in $ \Hsp$. 
  We apply Theorem $\ref{thm.4.7}$ to the sequence. 
  Then,  there exist a subsequence for $n$, $J_{0} \in \N_0 \cup \br{\infty}$, 
    $\phiv_{j}=(\phi_{j,1},\phi_{j,2},\dots,\phi_{j,N}) \in \Hsp \setminus \br{0}$, 
    $\br{\yn{j}} \subset \R^3$, $\br{\tn{j}} \subset \R$, and $\br{\rv{j}} \subset \Hsp$  
    such that 
    \begin{align}
      \wzero = \sum_{j=1}^{J} U(\tn{j})T_{\yn{j}} \phiv_j +\rv{J} \label{eq:6.8a}
    \end{align}
    for every $1 \le j \le J_0$ and $n \ge 1$, 
    \begin{align}
      |\yn{j_1} -\yn{j_2}| + |\tn{j_1} - \tn{j_2}| \rightarrow \infty
      \ \ \ (n \to \infty)
      \label{eq:6.8b}
    \end{align}
    for every $1 \le j_1 < j_2 \le J_0$, and
    \begin{align}
      &\lims_{n \to \infty} \Xnorm{U(\cdot) \rv{J}}{\Lsp{\infty}{\R}{(L^4)^N} \cap S(\R)}
      \rightarrow 0 \ \ \ (J \to J_0).
      \label{eq:6.8c}
    \end{align}
One also has
    \begin{align}
      &\lim_{n \to \infty} M(\wzero) \ge \sum_{j=1}^{J_0} M(\phiv_j), \label{eq:6.8d} \\
      &\lim_{n \to \infty} H(\wzero) \ge \sum_{j=1}^{J_0} H(\phiv_j), \label{eq:6.8e} \\
      &\lim_{n \to \infty} G(\wzero) = \sum_{
        \substack{j \in [1,J_0] \\ \tn{j} \equiv 0}} G(\phiv_j).  \label{eq:6.8f}
    \end{align}
    Note that we can assume  
    $y_n$ satisfying  $\yn{j} \equiv 0$ or $|\yn{j}| \rightarrow \infty$ as $n \to \infty$ 
    and $t_n$ satisfying $\tn{j} \equiv 0$ or $\tn{j} \rightarrow \pm \infty$ as $n \to \infty$
    for every $j$. 
 
    We first show the following.
    \begin{lemma}\label{lem.6.6}
    One has
  $       \sum_{j=1}^{J_0} M(\phiv_j) H(\phiv_j) <  M(\Qv)H(\Qv). $ 
    Further, if $t_n^j \equiv 0$ then $K(\phiv_j) >0$ and $E(\phiv_j) >0$.
    \end{lemma}
    \begin{proof}[Proof of Lemma \ref{lem.6.6}] 
  The first   inequality  follows from $\eqref{eq:6.6b}$, $\eqref{eq:6.8d}$ and $\eqref{eq:6.8e}$. 
    The latter half follows from the inequality and the variational characterization of the ground state
    (See \cite[Lemma 5.4]{Masaki}).
    \end{proof}

    \begin{lemma}[]\label{lem.6.7}
      Define $\tilde{E}(\phiv_j) := \lim_{n \to \infty} E(U(\tn{j})T_{\yn{j}} \phiv_j )$. 
      Then, $\tilde{E}(\phiv_j) =E(\phiv_j)$ if $\tn{j}\equiv0$ and
      $ \tilde{E}(\phiv_j) = H(\phiv_j)$ if $\tn{j}\to \pm \infty$ as $n\to\infty$.
	In particular, $\tilde{E}(\phiv_j)>0$ for $j \in [1,J_0]$.
      In addition, the followings are true:
      \begin{align}
        &\lim_{n \to \infty} E(\wv_n) \ge \sum_{j=1}^{J_0}\tilde{E}(\phiv_j), \label{eq:6.12} \\
        &\sum_{j=1}^{J_0} M(\phiv_j) \tilde{E}(\phiv_j) \le A_c. \label{eq:6.13} 
      \end{align}
    \end{lemma}
    \begin{proof}[Proof of Lemma \ref{lem.6.7}] 
The former half is an immediate consequence of the dispersive estimate for $U(\cdot)$.
      Next, \eqref{eq:6.12} follows from $\eqref{eq:6.8c}$ and $\eqref{eq:6.8f}$. Indeed,
      \begin{align*}
        \lim_{n \to \infty} E(\wv_n) & \ge \lim_{n \to \infty} (H(\wzero) -G(\wzero)) \\
        &\ge \sum_{j=1}^{J_0}H(\phiv_j)  
        - \sum_{\substack{j \in [1,J_0] \\ \tn{j} \equiv 0}} G(\phiv_j) 
        =\sum_{j=1}^{J_0} \tilde{E}(\phiv_j).
      \end{align*}
      Let us finally consider \eqref{eq:6.13}. If $j$ satisfies $\tn{j} \to \pm\infty$ as $n\to\infty$,  
      then $\tilde{E}(\phiv_j) = H(\phiv_j) >0$.  
      If $j$ satisfies $\tn{j} \equiv 0$ , then $\tilde{E}(\phiv_j) = E(\phiv_j) >0$ by Lemma $\ref{lem.6.6}$. 
      Therefore, For any $j \in [1,J_0]$, $\tilde{E}(\phiv_j)>0$ follows.
	Then, \eqref{eq:6.13} then follows from $\eqref{eq:6.6a}$, $\eqref{eq:6.8d}$ and $\eqref{eq:6.12}$. 
    \end{proof}
    \begin{lemma}[Only one profile]\label{lem.6.8}
      $J_0 =1$.
    \end{lemma}
    \begin{proof}[Proof of Lemma \ref{lem.6.8}] 
     Let us prove this by contradiction. Suppose $J_0=0$. 
     Then  
$     \Xnorm{U(t)\wzero}{S(\R)} \rightarrow 0$ as $n \to \infty
 $  by $\eqref{eq:6.8a}$ and $\eqref{eq:6.8c}$. 
     By Corollary $\ref{cor.3.8}$ (1), one has 
$
      \Xnorm{\wv}{S(\R)} \le 2 \Xnorm{U(t)\wzero}{S(\R)} \to 0
$ 
    as $n \to \infty.$
    This contradicts $\eqref{eq:6.8d}$. Next, suppose $J_0 \ge 2$. 
    For each $j\in [0,J_0]$, we define a nonlinear profile  $\Pv_j(t)$ as follows:
    If $\tn{j} \equiv 0$, then we define $\Pv_j(t)$  as a maximal-lifespan solution  to \eqref{E:gNLS} by
    \begin{align}\label{eq:6.14a}
      \Pv_j(0) = \phiv_j.
    \end{align}
    Then, $E(\Pv_j)=E(\phiv_j)= \tilde{E}(\phiv_j)$. 
    If $\tn{j} \rightarrow \pm \infty$, 
    we define $\Pv_j(t)$ as a maximal-lifespan solution to \eqref{E:gNLS} satisfying
    \begin{align}\label{eq:6.14b}
      \lim_{n \to \pm \infty} U(-t) \Pv_j(t) = \phiv_j\ \ \ \text{in}\ \ \Hsp
    \end{align}
    This solution is given by Theorem $\ref{thm.3.13}$, 
   Moreover, we define 
    \begin{align}\label{eq:6.15}
      \tilde{\wv}_n^{J} (t)= \sum_{j=1}^{J}T_{\yn{j}} \Pv_j(t+\tn{j}) + U(t)\rv{J}. 
    \end{align}
    
   To estimate $\Xnorm{\wv_n}{S(\R)}$, we apply
  Theorem $\ref{thm.3.14}$ with taking $\tilde{\wv}_n^{J}$ as an approximate solution on $\R$.
   To this end, let us verify that the assumption of Theorem $\ref{thm.3.14}$ is satisfied. 
    By $\eqref{eq:6.13}$, for any $j \in [1,J_0]$, we have $M(\Pv_j)E(\Pv_j) < A_c$. 
    Therefore, we have 
    \begin{align*}
      \Xnorm{\Pv_j}{S(\R)} \le L(M(\Pv_j)E(\Pv_j)) < \infty.
    \end{align*}
    Thus, 
    \begin{align}\label{eq:6.16}
      \Xnorm{ \tilde{\wv}_n^{J}}{S(\R)} 
      &\le \sum_{j=1}^{J} \Xnorm{\Pv_j}{S(\R)} +\Xnorm{U(t)\rv{j}}{S(\R)} 
      < \infty 
    \end{align}
    follows. Next, we prove
    \begin{align}\label{eq:6.17}
      \sup_{J}\lims_{n \to \infty} \Xnorm{ \tilde{\wv}_n^{J}}{S(\R)} \le C_0.
    \end{align}
    It is obvious when $J_0$ is finite.
    Let us consider the case $J_0= \infty$. 
    By Corollary $\ref{cor.3.8}$ (2) and 
     $\sum_{j=1}^{\infty} \Xnorm{\phiv_j}{\Hsp}^2 < \infty$,  one has
	\begin{equation}\label{eq:6.17.5}
    \begin{aligned}
      \lims_{n \to \infty} \Xnorm{\sum_{j=J'}^{\infty} T_{\yn{j}} \Pv_j(t+\tn{j}) }{S(\R)} 
      & \le \lims_{n \to \infty} \sum_{j=J'}^{\infty}  (\Xnorm{T_{\yn{j}} \Pv_j(t+\tn{j}) }{S(\R)}^4 )^{\frac{1}{4}} \\
      & \le \sum_{j=J'}^{\infty}  (C \Xnorm{\phiv_j }{\Hsp}^4 )^{\frac{1}{4}} \le C \delta
    \end{aligned}
    \end{equation}
    for large $J'$.
Hence, together with \eqref{eq:6.8c}, we obtain  \eqref{eq:6.17}.

    Next, we evaluate the difference at $t=0$. By definition of the nonlinear profiles, one has 
    \begin{align} \label{eq:6.18}
      \Xnorm{U(t) (\wzero - \tilde{\wv}_n^{J}(0))}{S(\R)}
      & \le \sum_{j=1}^{J} \Xnorm{U(t) (U(\tn{j})T_{\yn{j}}  \phiv_j - T_{\yn{j}} \Pv_j(\tn{j}) )}{S(\R)} \notag \\
      & \le C \sum _{j=1} ^J \Xnorm{U(\tn{j}) \phiv_j -\Pv_j(\tn{j}) }{\Hsp}\to 0
    \end{align}
    as $n\to\infty$.

    Finally, we evaluate $\bm{\Lambda}_n^J := \Equ{\tilde{\wv}_n^{J}} + \bm{F}(\tilde{\wv}_n^{J})$. 
    We have
    \begin{align*}
      \bm{\Lambda}_n^J 
      & = \left(\bm{F}(\sum_{j=1}^{J} T_{\yn{j}} \Pv_j(t+\tn{j}) + U(t) \rv{j}) 
      - \bm{F}(\sum_{j=1}^{J} T_{\yn{j}} \Pv_j(t+\tn{j})) \right)\\
      &\ \ \ +\left(
        \bm{F}(\sum_{j=1}^{J} T_{\yn{j}} \Pv_j(t+\tn{j}))
        - \sum_{j=1}^{J} \bm{F} (T_{\yn{j}} \Pv_j(t+\tn{j}))\right) \\
        &=: I_1 + I_2.
    \end{align*}
   Let us evaluate $I_1$ and $I_2$. From H\"older's inequality, we obtain 
    \begin{align*}
      \Xnorm{I_1}{N(\R)} \le C \Xnorm{U(t) \rv{j}}{S(\R)}( 
        \Xnorm{\sum_{j=1}^{J} T_{\yn{j}} \Pv_j(t+\tn{j})}{S(\R)}^2 + \Xnorm{U(t) \rv{j})}{S(\R)}^2
    ).
    \end{align*}
    Thus, \eqref{eq:6.8c} and \eqref{eq:6.17.5} give us
$      \lim_{n \to \infty}\Xnorm{I_1}{N(\R)} \rightarrow 0$ as $J \to J_0$.
 
   We have 
    $I_2 = \sum_{(j_1,j_2,j_3) \in [1,J]^3 \setminus \br{j_1=j_2=j_3} } A_{j_1} \bar{A}_{j_2} A_{j_3}$, where
 $A_j = T_{\yn{j}}\Pv_j(t+\tn{j})$.
    Therefore, by H\"older's inequality, we have
    \begin{align*}
      \Xnorm{I_2}{N(\R)} \le \sum_{j_1 \neq j_2} \Xnorm{A_{j_1} \bar{A}_{j_2}}{L^4L^2} \Xnorm{\Pv_{j_3}}{S(\R)}
      + \sum_{j_2 \neq j_3} \Xnorm{A_{j_3} \bar{A}_{j_2}}{L^4L^2} \Xnorm{\Pv_{j_1}}{S(\R)}.
    \end{align*}
  We see from \eqref{eq:6.8b} that 
  $   \sup_J \lim_{n \to \infty} \Xnorm{I_2}{N(\R)} = 0.
 $   Thus, we obtain
     \begin{align}\label{eq:6.22}
      \lims_{n \to \infty} \Xnorm{\bm{\Lambda}_n^J}{N(\R)}  \rightarrow 0.
    \end{align}
    as $J \to J_0$.
    
    Let $\ep_0$ be the constant given by Theorem $\ref{thm.3.14}$ with the choice $M=2C_0$, where $C_0$ is the constant in $\eqref{eq:6.17}$.
    Then $\eqref{eq:6.17}$ implies that
   for all $J$ there exists  $ \tilde{N}_1 =  \tilde{N}_1(J)$ such that 
    if $n \ge \tilde{N}_1$ then $$\Xnorm{\tilde{\wv}_n^{J}}{S(\R)} \le M.$$
    By $\eqref{eq:6.22}$, there exist $J_0$ 
  and $\tilde{N}_2=  \tilde{N}_2(J_0)$ such that 
    if $ n \ge \tilde{N}_2$ then $$\Xnorm{\bm{\Lambda}_n^{J_0}}{N(\R)} \le \tfrac{\ep_0}{2}.$$ 
    Furthermore, by $\eqref{eq:6.18}$, there exists $\tilde{N}_3=  \tilde{N}_3(J_0)$ such that 
    if $n \ge \tilde{N}_3$ then $$\Xnorm{U(t)(\wzero - \tilde{\wv}_n^{J})}{S(\R)} \le \tfrac{\ep_0}{2}.$$ 
    Then,  
    if $n \ge \max(\tilde{N}_1(J_0),\tilde{N}_2(J_0),\tilde{N}_3(J_0))$ then
    $
      \Xnorm{\wv_n - \tilde{\wv}_n^{J_0} }{S(\R)} \le C \ep_0 ,
    $  
    which implies $  \lims_{n \to \infty} \Xnorm{\wv_n }{S(\R)}<\infty$.
    This contradicts $\eqref{eq:6.6d}$. So, we have $J_0=1$. 
  \end{proof}
  By $J_0=1$, one sees that $\eqref{eq:6.8a}$ and $\eqref{eq:6.8c}$ satisfy
  \begin{align}
    &\wzero = U(\tn{1})T_{\yn{1}} \phiv_1 + \rv{1},  \label{eq:6.23a} \\
    &\lim_{n \to \infty} \Xnorm{U(t)\rv{1}}{S(\R)} = 0.  \label{eq:6.23b}
  \end{align}
Arguing as in the previous lemma, we also obtain
    $t_n^1 = 0$.
  Indeed, if $t_n^1 \to \infty$ (resp. if $t_n^1 \to -\infty$) as $n\to\infty$ then
 we obtain $\lim_{n\to\infty}\Xnorm{\wv_n}{S([0,\infty))}=0 $ 
  (resp. $\lim_{n\to\infty}\Xnorm{\wv_n}{S((-\infty,0])}=0 $).
  In particular, we see that $\Pv_1$ is defined by the initial condition $\eqref{eq:6.14a}$. 

Let us prove that $\Pv_1$ is the desired solution $\wv_\infty$.
By a similar argument as in the proof of Lemma \ref{lem.6.8}, one also obtain 
\[
	   \Xnorm{\Pv_1}{S([0,\infty))} = \Xnorm{\Pv_1}{S((-\infty,0])} = \infty,
\]
which is \eqref{eq:6.7d}.
Further, this implies $ M(\Pv_1)E(\Pv_1) \ge  A_c$ by definitions of $L$ and $A_c$.
Plugging this inequality to \eqref{eq:6.12}, we obtain \eqref{eq:6.7a}.
Further, thanks to \eqref{eq:4.7.7}, we obtain
$\rv{1} \to 0$
strongly in $\Hsp$ as $n\to\infty$.
  This reads as
  \begin{align*}
    T_{\yn{1}}^{-1} \wzero \rightarrow \phiv_1  \ \ \ \text{in} \ \ \Hsp
  \end{align*}
  as $n \to \infty$. Hence, \eqref{eq:6.7o} holds.
It follows from Lemma \ref{lem.6.6} that
  $K(\wv_{\infty})(0) =K(\phiv_1) >0$. Thus, by Corollary $\ref{cor.5.11}$, we have
  \begin{align*}
    \sup_{t\in \R} M(\Pv_1(t))H(\Pv_1(t)) < M(\Qv)H(\Qv).
  \end{align*}
  Therefore, $\eqref{eq:6.7b}$ holds. 
  Note that \eqref{eq:6.7c} is a consequence of \eqref{eq:6.7b} and the mass conservation.
  Thus, the proof of Propsition $\ref{prop.6.5}$ is completed.
  \end{proof}

\subsection{Analysis of the critical element}
We investigate the property of $\uc$ in Theorem $\ref{thm.6.4}$. 
Recall that we are supposing that $\eqref{eq:6.1}$. 
Let $\uc=(u_{c,1}, \cdots ,u_{c,N})$ be the solution in Theorem $\ref{thm.6.4}$. 

The first one is the precompactness modulo space translation.
\begin{proposition}[]\label{prop.6.12}
  There exists $y(t)\colon \R \to \R^3$ such that  the set
  $\br{(\Ty{y(t)}u_{c,1}(t),\cdots ,\Ty{y(t)}u_{c,N}(t))  \mid t \in \R}$ is precompact in $\Hsp$. 
\end{proposition}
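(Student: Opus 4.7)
The idea is to reduce the statement to a direct application of Proposition \ref{prop.6.5} to time-translates of the critical element. Fix an arbitrary sequence $\{t_n\}\subset\R$ and set $\wv_n(t,x) := \uc(t+t_n,x)$. Each $\wv_n$ is a global $H^1$-solution to \eqref{E:gNLS}, so it suffices to verify the four hypotheses \eqref{eq:6.6a}--\eqref{eq:6.6d} for $\{\wv_n\}$; Proposition \ref{prop.6.5} will then yield a subsequence, translations $\{y_n\}\subset\R^3$, and $\wv_{\infty,0}\in\Hsp$ with
\[
  T_{y_n}^{-1}\uc(t_n) = T_{y_n}^{-1}\wv_n(0)\longrightarrow \wv_{\infty,0}\quad\text{strongly in }\Hsp.
\]

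The verification is straightforward: by Theorem \ref{thm.3.3}, $M(\wv_n)E(\wv_n)=M(\uc)E(\uc)=A_c$, which gives \eqref{eq:6.6a}. By Theorem \ref{thm.6.4}, $\sup_{n,t}M(\wv_n(t))H(\wv_n(t))=\sup_{t}M(\uc(t))H(\uc(t))< M(\Qv)H(\Qv)$, so \eqref{eq:6.6b} holds, and together with mass conservation this also produces the uniform $H^1$-bound \eqref{eq:6.6c} at time $t=0$. For \eqref{eq:6.6d}, note that
\[
   \Xnorm{\wv_n}{S([0,\infty))} = \Xnorm{\uc}{S([t_n,\infty))},\qquad
   \Xnorm{\wv_n}{S((-\infty,0])} = \Xnorm{\uc}{S((-\infty,t_n])};
\]
if either norm were finite for some $n$, Proposition \ref{prop.3.12} would force $\uc$ to scatter in the corresponding time direction, contradicting Theorem \ref{thm.6.4}. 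Hence both norms are identically infinite in $n$, which certainly implies the required divergence.

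Having obtained the convergence above, we have shown that any sequence $\{\uc(t_n)\}$ admits a subsequence that converges in $\Hsp$ after a suitable space translation. Equivalently, the orbit $\{\uc(t):t\in\R\}$ is precompact in the quotient of $\Hsp$ by the translation action of $\R^3$. To obtain an actual function $y\colon\R\to\R^3$ as in the statement, one selects, for each $t\in\R$, a translation $y(t)$ placing $T_{y(t)}^{-1}\uc(t)$ in a fixed fundamental domain for this action (any reasonable selection, e.g.\ based on a center-of-mass-type functional of $|\uc(t)|^2$, works); the convergence along arbitrary sequences then promotes to precompactness of $\{T_{y(t)}^{-1}\uc(t):t\in\R\}$ in $\Hsp$.

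The only nontrivial ingredient is the verification of \eqref{eq:6.6d} through Proposition \ref{prop.3.12}; the remaining items \eqref{eq:6.6a}--\eqref{eq:6.6c} are immediate consequences of conservation laws and the properties of $\uc$ recorded in Theorem \ref{thm.6.4}. The construction of $y(t)$ itself is a standard selection argument and not the heart of the proof.
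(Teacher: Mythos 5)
Your proposal is correct and is essentially the paper's argument: the paper disposes of this proposition by citing \cite{Duyckaerts-Holmer-Roudenko}*{Proposition 3.2} (plus Tychonoff's theorem for the $N$-component setting), and the proof behind that citation is exactly what you carry out internally --- apply the key convergence result (here Proposition \ref{prop.6.5}) to the time-translates $\uc(\cdot+t_n)$, verifying \eqref{eq:6.6a}--\eqref{eq:6.6d} via the conservation laws, Theorem \ref{thm.6.4}, and Proposition \ref{prop.3.12}, and then upgrade subsequential convergence modulo translation to a genuine function $y(t)$ by a selection argument. The one point to phrase carefully in the last step is that the literal center of mass $\int x\,|\uc(t)|^2\,dx$ need not be finite for $H^1$ data, so the ``reasonable selection'' should be, e.g., a near-maximizer $y(t)$ of the localized mass $\sum_{j}\int_{|x-y|\le R_*}|u_{c,j}(t,x)|^2\,dx$ for a fixed large $R_*$, for which the boundedness of $y(t_n)-y_n$ (and hence precompactness of the selected orbit) follows from the strong convergence produced by Proposition \ref{prop.6.5}.
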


\begin{proof}[Proof of Proposition \ref{prop.6.12}] 
By \cite[Proposition 3.2]{Duyckaerts-Holmer-Roudenko} and Tychonoff's theorem, this proof is completed. 
\end{proof}
The second one is the zero-moment property.
\begin{theorem}[]\label{thm.6.13}
  $P(\uc)=0$. 
\end{theorem}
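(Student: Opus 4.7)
My plan is to argue by contradiction using the Galilean invariance of \eqref{E:gNLS}. Suppose $P(\uc) \neq 0$, and set $\xi_0 := -P(\uc)/(2M(\uc)) \in \R^3$. Define the common-phase Galilean boost component-wise by
\[
	\tilde{u}_j(t,x) := e^{i(x\cdot\xi_0 - t|\xi_0|^2)} u_{c,j}(t, x - 2t\xi_0), \qquad 1 \le j \le N,
\]
and write $\utilde = (\tilde{u}_1, \dots, \tilde{u}_N)$. The first step is to check that $\utilde$ is again a global $H^1$-solution to \eqref{E:gNLS}. A direct computation gives $(i\partial_t + \Delta)\tilde{u}_j = e^{i(x\cdot\xi_0 - t|\xi_0|^2)}[(i\partial_t + \Delta) u_{c,j}](t, x-2t\xi_0)$, and the crucial point is that the nonlinearity respects the common phase: differentiating \eqref{gauge condition} in $\theta$ yields the covariance $F_j(e^{i\theta}\bm{z}) = e^{i\theta} F_j(\bm{z})$, so that $F_j(\utilde)(t,x) = e^{i(x\cdot\xi_0 - t|\xi_0|^2)} F_j(\uc)(t, x - 2t\xi_0)$. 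Thus the equation closes and $\utilde$ solves \eqref{E:gNLS} on $\R$.

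Next, a straightforward change of variables $y = x - 2t\xi_0$, together with the gauge invariance of $g$, produces
\[
	M(\utilde) = M(\uc), \qquad G(\utilde) = G(\uc), \qquad P(\utilde) = P(\uc) + 2\xi_0 M(\uc) = 0,
\]
and $H(\utilde) = H(\uc) + \xi_0 \cdot P(\uc) + |\xi_0|^2 M(\uc)$. Substituting our choice of $\xi_0$ yields
\[
	E(\utilde) = E(\uc) - \frac{|P(\uc)|^2}{4 M(\uc)} < E(\uc),
\]
so that $M(\utilde) E(\utilde) < M(\uc) E(\uc) = A_c$. Moreover $H(\utilde) M(\utilde) < H(\uc) M(\uc) < H(\Qv) M(\Qv)$ by Theorem \ref{thm.6.4}, so Corollary \ref{cor.5.11} gives $K(\utilde(0)) > 0$ and places $\utilde$ inside the regime appearing in the definition of $A_c$.

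Finally, I would note that the Galilean boost preserves the scattering norm: since $|\tilde{u}_j(t,x)| = |u_{c,j}(t, x-2t\xi_0)|$ and $L^{8}_t L^4_x$ is translation-invariant, $\|\utilde\|_{S(I)} = \|\uc\|_{S(I)}$, so by Proposition \ref{prop.3.12} and Theorem \ref{thm.6.4} one still has $\|\utilde\|_{S([0,\infty))} = \|\utilde\|_{S((-\infty,0])} = \infty$. Thus $\utilde$ is a global non-scattering $H^1$-solution with $K(\utilde(0)) > 0$ and $M(\utilde) E(\utilde) < A_c$, which contradicts the definition of $A_c$. Hence $P(\uc) = 0$. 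The principal obstacle is the verification that the common-phase Galilean boost is compatible with \emph{every} component $F_j$; this is precisely where the gauge condition \eqref{gauge condition}, which encodes the mass-resonance of \eqref{E:gNLS} mentioned in the introduction, plays a genuinely essential role.
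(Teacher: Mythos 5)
Your proposal is correct and follows essentially the same route as the paper: the common-phase Galilean boost with $\xi_0=-P(\uc)/(2M(\uc))$, justified via the gauge condition \eqref{gauge condition}, the computation $E(\utilde)=E(\uc)-\tfrac{|P(\uc)|^2}{4M(\uc)}$ together with the corresponding strict decrease of the mass-kinetic quantity, and the contradiction $A_c\le M(\utilde)E(\utilde)<M(\uc)E(\uc)=A_c$ obtained from the invariance of the $S$-norm. The only cosmetic slip is attributing the covariance $F_j(e^{i\theta}\bm{z})=e^{i\theta}F_j(\bm{z})$ to differentiating \eqref{gauge condition} in $\theta$ (that instead yields the momentum identity \eqref{eq:1.2}); it actually follows by differentiating the identity $g(e^{i\theta}\bm{z})=g(\bm{z})$ in $\overline{z_j}$.
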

\begin{proof}[Proof of Theorem \ref{thm.6.13}] 
  Let $\xi_0 \in \R^3$. Define 
  \begin{align*}
    \wv_c(t) = 
    e^{i x \cdot \xi_0} e^{-it |\xi_0|^2} T_{2 \xi_0t}\uv_{c} (t)
  .
  \end{align*}
  Since 
  $F_j(\wv_c) = e^{i x  \cdot \xi_0} e^{-it |\xi_0|^2} T_{2 \xi_0t} F_j(\uc)$ follows from the gauge condition, 
  one verifies that $\wv_c$  is a solution to (gNLS). 
A computation shows
  \begin{align*}
    H(\wv_c)={}& |\xi_0|^2 M(\uc) + \xi_0 P(\uc) +H(\uc),\\
    E(\wv_c)={}& |\xi_0|^2 M(\uc) + \xi_0 P(\uc) +E(\uc).
  \end{align*}
  We regard $E(\wv_c)$ as a function of $\xi_0$. Then,
  it takes its minimum at $\xi_0 = -\frac{P(\uc)}{2M(\uc)}$. The minimum value is
  $E(\wv_c) = E(\uc) - \tfrac{|P(\uc)|^2}{4M(\uc)} .$
  Suppose that $P(\uc) \neq 0$. 
  Then, choosing $\xi_0$ as above, one has
  \begin{align*}
    M(\wv_c) E(\wv_c) < M(\uc) E(\uc) <M(\Qv)E(\Qv).
  \end{align*}
  Similarly, 
  \begin{align*}
    M(\wv_c) H(\wv_c) < M(\uc) H(\uc) <M(\Qv)H(\Qv).
  \end{align*}
 Then,  $K(\wv_c(t)) >0$ follows from Corollary $\ref{cor.5.11}$.
  Furthermore, $\Xnorm{\wv_c}{S(\R)} =\Xnorm{\uc}{S(\R)} = \infty$ holds. 
  Therefore, we see that
  $L( M(\wv_c) E(\wv_c)) = \infty$. This shows $A_c \le M(\wv_c) E(\wv_c)$.
  Thus, by Proposition $\ref{prop.6.2}$ (3), we have
  \begin{align*}
    A_c \le M(\wv_c) E(\wv_c) < M(\uc) E(\uc) = A_c.
  \end{align*}
  This is a contradiction. 
\end{proof}

\subsection{Nonexistence of the critical element}
Finally, we shall see that the existence of $\uc$ leads us to a contradiction.  
This implies that $\eqref{eq:6.1}$ is false, 
that is, $A_c =M(\Qv)E(\Qv)$ holds. 

\begin{lemma}[]\label{lem.6.16}
For every $\ep > 0$,  there exists $R>0$ such that 
\begin{align*}
  \sum_{j=1}^{N}\int_{|x + y(t)| \ge R} (|\nabla u_{c,j}(t,x)|^2+|u_{c,j}(t,x)|^2+|u_{c,j}(t,x)|^4) dx < \ep
\end{align*}
for all $t \in \R$.
\end{lemma}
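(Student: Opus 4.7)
The plan is to deduce the lemma from the precompactness statement of Proposition~\ref{prop.6.12} via the standard tightness property of precompact sets in $H^1(\R^3)$. Set $\bm{v}(t) := (T_{y(t)}^{-1} u_{c,1}(t), \dots, T_{y(t)}^{-1} u_{c,N}(t))$. By Proposition~\ref{prop.6.12}, the orbit $\mathcal{O} := \{\bm{v}(t) : t \in \R\}$ is precompact in $\Hsp$, and hence precompact in $(L^4(\R^3))^N$ as well by the continuous Sobolev embedding $H^1(\R^3) \hookrightarrow L^4(\R^3)$.

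The central step is the following standard uniform tightness property of precompact sets: for any precompact $K \subset \Hsp$ and any $\ep > 0$, there exists $R > 0$ such that
\[
\sum_{j=1}^{N} \int_{|x| \ge R} \bigl(|\nabla v_j|^2 + |v_j|^2 + |v_j|^4\bigr)\,dx < \ep
\]
for every $\bm{v} = (v_1, \dots, v_N) \in K$. I would prove this by total boundedness: cover $K$ by finitely many $\Hsp$-balls of radius $\delta$ centered at fixed functions $\bm{\phi}^{(k)}$, approximate each center in $H^1$ and in $L^4$ by a compactly supported function, and choose $R$ larger than all the resulting supports. A triangle inequality, combined with the inclusion $H^1 \hookrightarrow L^4$ to control the $L^4$-balance with the $H^1$-balance, then transfers the tail smallness uniformly to every element of $K$.

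Finally, I apply this tightness to $\mathcal{O}$ and, for each fixed $t$, undo the translation via a change of variables that shifts the origin by $\pm y(t)$: the region $\{|x| \ge R\}$ in the reference frame of $\bm{v}(t)$ becomes the region $\{|x + y(t)| \ge R\}$ in the original frame (the sign matching the convention adopted in the definition of $T_{y(t)}^{-1}$), yielding precisely the inequality stated for $\uc(t)$. The only genuine technical step is the tightness lemma above; the remaining ingredients — the Sobolev embedding and the change of variables — are routine. I do not foresee any serious obstacle, though care must be taken in tracking the translation sign consistently throughout.
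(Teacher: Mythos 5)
Your proposal is correct and is precisely the argument the paper intends: its proof of the lemma consists of a single appeal to ``the well-known characterization of the precompactness,'' i.e., the uniform tightness of precompact subsets of $\Hsp$, which you spell out via a finite $\delta$-net, compactly supported approximation of the centers, and the embedding $H^1(\R^3)\hookrightarrow L^4(\R^3)$ applied to the precompact orbit from Proposition~\ref{prop.6.12}. One harmless bookkeeping point: with the paper's convention $(T_{y(t)}^{-1}u_{c,j}(t))(x)=u_{c,j}(t,x+y(t))$, the exterior region $\{|x|\ge R\}$ transforms to $\{|x-y(t)|\ge R\}$, which agrees with the stated $\{|x+y(t)|\ge R\}$ after the trivial relabeling $y(t)\mapsto -y(t)$.
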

This follows from the well-known characterization of the precompactness. 

\begin{proposition}[]\label{prop.6.14}
  $\displaystyle \lim_{t \to \infty} \left|\tfrac{y(t)}{t} \right| = 0$ .
\end{proposition}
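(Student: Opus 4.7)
The plan is to argue by contradiction, combining the vanishing momentum $P(\uc) = 0$ from Theorem \ref{thm.6.13} with the precompactness modulo translation in Proposition \ref{prop.6.12} and its quantitative version Lemma \ref{lem.6.16}. The heuristic is that the ``center of mass'' of $|\uc(t)|^2$ should move at speed comparable to $y'(t) M(\uc)$, while $P(\uc) = 0$ forces the time derivative of a suitably truncated moment to be arbitrarily small; this sublinear growth will rule out $|y(t)|/t \not\to 0$.

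Concretely, given $\varepsilon > 0$ (to be taken small at the end), let $R_0 = R_0(\varepsilon)$ be produced by Lemma \ref{lem.6.16}, pick a smooth cutoff $\phi \in C_c^\infty(\R^3)$ with $\phi \equiv 1$ on $\{|x| \le 1\}$ and $\phi \equiv 0$ on $\{|x| \ge 2\}$, and, for $R \ge 1$, introduce the truncated moment
\[
	X_R(t) := \int_{\R^3} \phi(x/R)\, x\, |\uc(t,x)|^2 \, dx,
\]
where $|\uc|^2 := \sum_{j=1}^N |u_{c,j}|^2$. The gauge condition \eqref{gauge condition} yields the continuity equation $\partial_t |\uc|^2 = -2\,\dvg J$ with $J := \sum_{j=1}^N \im(\overline{u_{c,j}} \nabla u_{c,j})$, and integration by parts gives
\[
	X_R'(t) = 2 \int \phi(x/R)\, J(t,x)\, dx + \frac{2}{R} \int x \bigl((\nabla \phi)(x/R) \cdot J(t,x)\bigr)\, dx.
\]
Using $\int J\,dx = P(\uc) = 0$, the first integral rewrites as $-2 \int (1-\phi(x/R)) J\,dx$; the second is supported on $R \le |x| \le 2R$. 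Provided $R \ge R_0 + |y(t)|$, both terms are majorized by $\int_{|x| \ge R} |\uc|\,|\nabla \uc|\,dx$, and by Cauchy--Schwarz with Lemma \ref{lem.6.16} one obtains $|X_R'(t)| \le C \varepsilon$.

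For the identification of $X_R(t)$ with $y(t) M(\uc)$, set $v_c(t,z) := \uc(t, z+y(t))$, so that by Proposition \ref{prop.6.12} the family $\{v_c(t)\}_{t \in \R}$ is precompact in $\Hsp$. Changing variables $z = x - y(t)$ writes $X_R(t)$ as the sum of $\int \phi((z+y(t))/R)\, z\, |v_c|^2 dz$ and $y(t) \int \phi((z+y(t))/R) |v_c|^2 dz$. Splitting at $|z| = R_0$ and using Lemma \ref{lem.6.16} together with the fact that $\phi((z+y(t))/R) \equiv 1$ on $\{|z| \le R_0\}$ whenever $R \ge R_0 + |y(t)|$ yields
\[
	X_R(t) = y(t)\, M(\uc) + O(R_0) + O(R \varepsilon),
\]
uniformly in $t$ on any interval where $R \ge R_0 + |y(t)|$. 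Suppose for contradiction there exist $\delta > 0$ and $t_n \to \infty$ with $|y(t_n)|/t_n \ge \delta$. Setting $\tau_n := \sup_{t \in [0, t_n]} |y(t)|$ (so $\tau_n \ge \delta t_n$), picking $s_n \in [0, t_n]$ with $|y(s_n)| \ge \tau_n - 1$, and letting $R_n := \tau_n + R_0$, the derivative bound integrated over $[0, s_n]$ together with the identification at $s_n$ and at $0$ produces
\[
	\tau_n\, (M(\uc) - C' \varepsilon) \le C \varepsilon t_n + O_\varepsilon(1).
\]
Dividing by $t_n$, using $\tau_n \ge \delta t_n$, and passing to the limit yields $\delta M(\uc) \le 2 C \varepsilon$ after fixing $\varepsilon$ small relative to $M(\uc)$; since $\varepsilon$ is arbitrary, this is a contradiction.

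The main technical obstacle I anticipate lies in the identification step. Two errors compete: in the $y(t)$-part, $|y(t)| \int (1 - \phi((z+y(t))/R)) |v_c|^2 dz$ has size $|y(t)|\,\varepsilon$; in the $z$-part, the contribution from $|z| > R_0$ is controlled only by $R \varepsilon$, because on the support of $\phi((z+y(t))/R)$ one has $|z| \lesssim R$. The compensating choice $R = R_n = \tau_n + R_0$ is the smallest $R$ for which the derivative bound remains valid throughout $[0, s_n]$, so the resulting $R_n \varepsilon$ term must be absorbed into $\tau_n M(\uc)$ on the left-hand side; this is exactly why $\varepsilon$ must be fixed strictly below a universal fraction of $M(\uc)$, independently of $n$. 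A minor but important point is that the time differentiation of $X_R(t)$, although formal for $\uc \in \Hsp$, is justified via the Duhamel formula thanks to $\phi(\cdot/R) x \in C_c^\infty$.
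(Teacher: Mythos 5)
Your proposal is correct and is essentially the paper's own argument: the same truncated first moment (your $X_R$ versus the paper's $z_R$ built from $\phi_R$ with $\phi_R(x)=x$ on $\{|x|\le R\}$), the same use of $P(\uc)=0$ (Theorem \ref{thm.6.13}) and Lemma \ref{lem.6.16} to obtain $|X_R'(t)|\le C\varepsilon$ once $R\ge R_0(\varepsilon)+|y(t)|$, the same identification $X_R(t)=2y(t)M(\uc)+O(R_0)+O(R\varepsilon)$, and the same final absorption of the $R\varepsilon$ error by fixing $\varepsilon$ small relative to $M(\uc)$. The only deviations are cosmetic: you select $\tau_n=\sup_{t\in[0,t_n]}|y(t)|$ and a near-maximizing time $s_n$ where the paper runs a first-exit-time construction $\tilde t_n=t_0(|y(t_n)|)$ to the same effect (your variant needs only local boundedness of $y$, not the continuity implicitly used by the exit-time argument), and your $y(t)M(\uc)$ should read $2y(t)M(\uc)$ under the paper's normalization $M(\uv)=\frac12\int|\uv|^2\,dx$, which affects nothing.
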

\begin{proof}[Proof of Proposition \ref{prop.6.14}] 
  Suppose for contradiction that
  \begin{align*}
    \exists \br{t_n} \to \infty ,\ \exists \ep_0 > 0 \ \text{s.t.}\ \tfrac{|y(t_n)|}{t_n} \ge \ep_0\ \ (t_n \to \infty).
  \end{align*}
 One may assume $y(0)=0$ without loss of generality. For $R>0$, we define $t_0(R) \coloneqq \inf \br{t \ge 0 \mid |y(t)| \ge R}$. Then, one has
  \begin{align*}
    t_0(R)>0,\ |y(t)| < R \text{ for }t<t_0(R),\ |y(t_0(R))| = R. 
  \end{align*}
  We define $R_n$ and $\tilde{t}_n$ by
  $R_n \coloneqq  |y(t_n)|$ and $\tilde{t}_n \coloneqq t_0(R_n)$, respectively. 
  Then, since one has $t_n \ge \tilde{t}_n$ by definition, it holds that 
  $
    \frac{R_n}{\tilde{t}_n} \ge \frac{R_n}{t_n} \ge \ep_0.
  $ 
  In addition, $\tilde{t}_n \to \infty$ $(n\to\infty)$ follows from $R_n \to \infty$ $(n\to\infty)$. 
  Thus, we have constructed
   $\br{\tilde{t}_n} \to \infty$ such that
  \begin{align}\label{eq:6.14.0}
  |y(t)| <R_n& \text{ for }   t \in [0, \tilde{t}_n), &
  |y(\tilde{t}_n)| &{}= R_n, &
  \tfrac{R_n}{\tilde{t}_n} &{}\ge \ep_0.
  \end{align}
  Hereafter, We write $\tilde{t}_n$ for $t_n$. 
  
  By Lemma $\ref{lem.6.16}$, for every $\ep>0$, there exists $R_0(\ep)$ such that 
  \begin{align}\label{eq:6.14.1}
    \sum_{j=1}^{N}
    \int_{|x + y(t)| \ge R_0(\ep)} |\nabla u_{c,j}(t,x)|^2+|u_{c,j}(t,x)|^2 dx < \ep
  \end{align}
  for all $t \in \R$.
  Pick $\theta (x) \in C_0 ^{\infty} (\R)$ such that
  \begin{align*}
    \theta (x) = \begin{cases}
      x & -1 \le x \le 1, \\
      0 & |x| \ge 2^{1/3}, 
    \end{cases}
  \end{align*}
  $|\theta (x)| \le |x|$ on $\R$, and $\Xnorm{\theta '}{L^{\infty}} \le 4$. 
  For $x=(x_1,x_2,x_3) \in \R^3$, let $\phi (x) = (\theta (x_1),\theta (x_2),\theta (x_3))$. 
  Then $|\phi(x)| = x$ for  $|x| \le 1$ and $\Xnorm{\phi}{L^{\infty}} \le 2$.
  For $R>0$, we define $\phi_R(x) = R \phi (x/R)$. Note that
  \[
  	\phi_R(x) = x \quad \text{for } |x| \le R
  \]
  and $\Xnorm{\phi_R}{L^{\infty}} \le 2 R$ and $\Xnorm{\nabla \phi_R}{L^{\infty}} \le 4$.
  We define $z_R\colon \R \to \R^3$ by
  \begin{align*}
    z_R(t) = \int \phi_R\sum_{j=1}^{N} |u_{c,j}|^2  dx . 
  \end{align*} 
  Then, by $\eqref{eq:1.2}$, we have $z_R'(t) = ((z_R'(t))_1, (z_R'(t))_2, (z_R'(t))_3)$ with
  \begin{align*}
    ((z_R'(t))_k = 2 \im \int \partial_k \phi_R 
    ( \sum_{j=1}^{N} \partial_k u_{c,j} \overline{u_{c,j}})dx.
  \end{align*}
  By Theorem $\ref{thm.6.13}$, we have
  \begin{align*}
    \sum_{j=1}^{N} \im \int_{|x_k| \le R} \partial_k u_{c,j} \overline{u_{c,j}} dx 
    = - \sum_{j=1}^{N} \im \int_{|x_k| > R} \partial_k u_{c,j} \overline{u_{c,j}} dx.
  \end{align*}
  Next, we calculate $z_R'(t)$. 
  Multiplying $\Equ{\uv(t,x)} + \bm{F}(\uv(t,x)) =0$ by $\phi_R \overline{\uv}$, taking the real part, and integrating with respect to $x$, 
  we have 
  \begin{align*}
    ((z_R'(t))_k = \sum_{j=1}^{N} \left[
      -2 \im \int_{|x_k| \ge R} \partial_k u_{c,j} \overline{u_{c,j}} dx
    + 2 \im \int_{|x_k| \ge R} 
    \partial_k \phi_R \partial_k u_{c,j} \overline{u_{c,j}} dx
    \right]
  \end{align*}
  for $k=1,2,3$. 
  Note that the nonlinear part are calculated by \eqref{eq:1.2}.
  In addition, using Schwarz's inequality and arithmetic-geometric mean, we have
  \begin{align}\label{eq:6.14.2}
    |z'_R(t)| \le 5 \sum_{j=1}^{N} \int_{|x| \ge R} (|\nabla u_{c,j}|^2 + |u_{c,j}|^2)dx.
  \end{align}
  Let $\tilde{R}_n \coloneqq R_n + R_0(\ep)$.  
  For $t \in [0,t_n]$ and $|x|>\tilde{R}_n$, $|x+y(t)|\ge \tilde{R}_n - R_n =R_0(\ep)$ follows.
  Using $\eqref{eq:6.14.1}$ and $\eqref{eq:6.14.2}$, we have
  \begin{align}\label{eq:6.14.3}
    |z'_{\tilde{R}_n}(t)| \le 5 \ep.
  \end{align}
  It follows from $y(0)=0$ that
  \begin{align*}
    z_{\tilde{R}_n}(0) 
    &= \sum_{j=1}^{N}  \left[ \int_{|x| \le R_0(\ep)} \phi_{\tilde{R}_n}  |u_{j,0}|^2 dx 
    + \int_{|x+y(0)| \ge R_0(\ep)}  \phi_{\tilde{R}_n} |u_{j,0}|^2 dx .
    \right]
  \end{align*}
 Hence, one sees from $\eqref{eq:6.14.1}$ and the properties of $\phi_{\tilde{R}_n}$ that
  \begin{align}\label{eq:6.14.4}
    |z_{\tilde{R}_n}(0) | \le 2R_0(\ep) M(\uc) + 2 \tilde{R}_n \ep.
  \end{align}
  Therefore, we have
  \begin{align*}
    z_{\tilde{R}_n}(t) 
    &= \sum_{j=1}^{N} 
    \int_{|x+y(t)| \ge R_0(\ep)} \phi_{\tilde{R}_n} |u_{j,c}|^2 dx 
    + \sum_{j=1}^{N} \int_{|x+y(t)| \le R_0(\ep)}  \phi_{\tilde{R}_n} |u_{j,c}|^2 dx \\
    & \eqqcolon \rm{I} + \rm {I\hspace{-.01em}I}.
  \end{align*}
  We evaluate $\rm{I}$. By $\Xnorm{\phi_{\tilde{R}_n} }{L^\infty} \le 2 \tilde{R}_n$, we have
 $   |{\rm{I}}| \le 2 \tilde{R}_n \ep$.
  Next, we evaluate $\rm {I\hspace{-.01em}I}$. 
  From $|x| \le |x +y(t)| + |y(t)| \le R_0(\ep) +R_n = \tilde{R}_n$, we obtain $\phi_{\tilde{R}_n}(x) = x$
  for $|x+y(t)| \le R_0(\ep)$. 
  Hence, we have
  \begin{align*}
    \rm {I\hspace{-.01em}I}
    & = \sum_{j=1}^{N} \left[ \int_{|x+y(t)| \le R_0(\ep)} (x+y(t)) |u_{j,c}|^2 dx 
    - y(t) \int_{|x+y(t)| \le R_0(\ep)} |u_{j,c}|^2 dx \right] \\
    & \eqqcolon  \rm{I\hspace{-.01em}I\hspace{-.01em}I} + \rm{I\hspace{-.01em}V}
  \end{align*}
  and
  \begin{align*}
   \rm{I\hspace{-.01em}V}  &=   - 2y(t) M(u_{j,c}) +y(t)\sum_{j=1}^{N} \left[  
     \int_{|x+y(t)| \ge R_0(\ep)} |u_{j,c}|^2 dx \right] \\
    & \eqqcolon   - 2y(t) M(u_{j,c}) + \rm{V}.
  \end{align*}
  One evaluates
$
    |{\rm{I\hspace{-.01em}I\hspace{-.01em}I}}| \le 2R_0(\ep)M(\uc)$ and  
$    |{\rm{V}}| \le |y(t)| \ep.$
  Thus, we have
  \begin{align*}
    |z_{\tilde{R}_n}(t) | & \ge | 2y(t) M(u_{j,c})|
    - (|\rm{I}|+| \rm{I\hspace{-.01em}I\hspace{-.01em}I}|+| \rm{V}|) \\
    & \ge 2|y(t)| M(\uc) -  R_0(\ep)M(\uc) -3  |y(t)| \ep.
  \end{align*}
  Taking $t=t_n$, we see that
  \begin{align}\label{eq:6.14.5}
    |z_{\tilde{R}_n}(t_n)| \ge \tilde{R}_n (2M(\uc) -3 \ep) -2R_0(\ep) M(\uc).
  \end{align}
 Combining $\eqref{eq:6.14.3}$, $\eqref{eq:6.14.4}$, and $\eqref{eq:6.14.5}$, we obtain
  \begin{align*}
    5 \ep t_n  \ge \int_{0}^{t_n} |z'_{\tilde{R}_n}(t)|dt 
    & \ge \left|  \int_{0}^{t_n} z'_{\tilde{R}_n}(t)dt \right| \\
    & = |z_{\tilde{R}_n}(t) - z_{\tilde{R}_n}(0)| \\
    & \ge \tilde{R}_n (2M(\uc) -5 \ep) -4 R_0(\ep) M(\uc).
  \end{align*}
  Recalling that $R_n \le \tilde{R}_n $, we have
  \begin{align*}
    5 \ep  
    \ge \tfrac{{R}_n}{t_n} (2M(\uc) -5 \ep) -  \tfrac{4 R_0(\ep) M(\uc)}{t_n}.
  \end{align*}
  Using $R_n / t_n \ge \ep_0$, we have
  \begin{align*}
    5 \ep  \ge \tfrac{{R}_n}{t_n} M(\uc)  + \ep_0 (M(\uc) -5 \ep) -  \tfrac{4 R_0(\ep) M(\uc)}{t_n}.
  \end{align*}
  If we choose $\ep = \frac{\ep_0}{5(1+ \ep_0)}M(\uc) $ then $ 5 \ep = \ep_0 (M(\uc) -5 \ep) $ and hence
  \begin{align*}
    \left| \tfrac{y(t_n)}{t_n} \right| \le \tfrac{{R}_n}{t_n} \le \tfrac{4R_0(\ep)}{t_n} \to 0\ \ \ (n \to \infty).
  \end{align*}
  This contradicts the assumption. 
\end{proof}
We are in a position to finish the proof. 

\begin{proposition}[]\label{prop.6.15}
  $\uc$ satisfying Theorem $\ref{thm.6.4}$ does not exist.  
\end{proposition}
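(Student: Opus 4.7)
The strategy is a truncated virial argument in the spirit of \cite{Duyckaerts-Holmer-Roudenko}. Fix a radial cutoff $\psi_R \in C^\infty(\R^3)$ with $\psi_R(x) = |x|^2$ on $\{|x| \le R\}$, $\psi_R$ constant for $|x|\ge 2R$, and $\norm{\partial^\alpha \psi_R}_{L^\infty}\lesssim R^{2-|\alpha|}$. Consider the localized virial quantity
\[
V_R(t):=\int_{\R^3}\psi_R(x)\sum_{j=1}^N|u_{c,j}(t,x)|^2\,dx.
\]
Using \eqref{eq:1.2} one obtains
\[
V_R'(t)=-2\int_{\R^3}\nabla\psi_R(x)\cdot \im\!\Bigl(\sum_{j=1}^N\overline{u_{c,j}}\nabla u_{c,j}\Bigr)dx,
\]
which yields the uniform bound $|V_R'(t)|\le C_1 R$ by the $\Hsp$-boundedness of $\uc$. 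A standard virial computation, using the Hamiltonian structure \eqref{E:Hamiltonian} together with the Euler-type identity $\sum_j\re(\overline{u_j}F_j(\uv))=\tfrac12 g(\uv)$ that follows from degree-$4$ homogeneity of $g$ and the gauge condition \eqref{gauge condition}, then gives
\[
V_R''(t)=8K(\uc(t))+\mathcal E_R(t),
\]
with an error term controlled by the mass and energy content of $\uc$ outside $\{|x|\le R\}$:
\[
|\mathcal E_R(t)|\le C\sum_{j=1}^N\int_{|x|\ge R}\bigl(|\nabla u_{c,j}|^2+|u_{c,j}|^2+|u_{c,j}|^4\bigr)dx+CR^{-2}M(\uc).
\]

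The second ingredient is a uniform lower bound on $K(\uc(t))$. Since $\norm{\uc}_{S(\R)}=\infty$, Proposition \ref{cor.3.8}(2) prevents $\norm{\nabla\uc(t)}_{(L^2)^N}$ from being arbitrarily small along any sequence of times, so Theorem \ref{thm.5.10}(1) applied with the $\delta>0$ coming from $M(\uc)E(\uc)=A_c<M(\Qv)E(\Qv)$ yields $K(\uc(t))\ge c_0>0$ for all $t\in\R$. Next, by Lemma \ref{lem.6.16} there exists $R_0(\eta)>0$, for any $\eta>0$, such that the relevant tail norms of $\uc(t)$ on $\{|x+y(t)|\ge R_0(\eta)\}$ are bounded by $\eta$ uniformly in $t$. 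Pick $\eta$ so small that $C\eta\le 4c_0$, and for $T>0$ large set
\[
R(T):=R_0(\eta)+\sup_{t\in[0,T]}|y(t)|.
\]
The inclusion $\{|x|\ge R(T)\}\subset\{|x+y(t)|\ge R_0(\eta)\}$ holds for every $t\in[0,T]$, hence $|\mathcal E_{R(T)}(t)|\le 4c_0+CR(T)^{-2}$, and therefore
\[
V_{R(T)}''(t)\ge 8c_0-4c_0-CR(T)^{-2}\ge 2c_0,\qquad t\in[0,T],
\]
provided $T$ (and hence $R(T)$) is large enough. Integrating twice and using $|V_R'(t)|\le C_1 R$ gives
\[
2c_0 T\le V_{R(T)}'(T)-V_{R(T)}'(0)\le 2C_1 R(T).
\]
Proposition \ref{prop.6.14} yields $R(T)/T\to 0$ as $T\to\infty$, so dividing by $T$ and letting $T\to\infty$ produces the contradiction $2c_0\le 0$. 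A symmetric argument in the negative time direction (which requires the analogue of Proposition \ref{prop.6.14} for $t\to-\infty$, obtained by applying its proof to the time-reversed solution) completes the proof.

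\textbf{Main difficulty.} The most delicate step is the virial identity in the vectorial polynomial setting. Although the scalar cubic $V''=8K$ identity is classical, here one must verify that the Pohozaev contribution produced by $\int x\cdot\nabla\overline{u_j}\,F_j(\uv)\,dx$ assembles (after integration by parts) into $-6G(\uv)$; this relies on the identity $\sum_j(\overline{u_j}F_j(\uv)+u_j\overline{F_j(\uv)})=2g(\uv)$, which in turn follows from degree-$4$ homogeneity of $g$ and \eqref{gauge condition}. The secondary obstacle is the uniform positive lower bound $K(\uc(t))\ge c_0$: one must exclude sequences of times along which $\norm{\nabla\uc}_{(L^2)^N}$ tends to zero, which is done by contradicting $\norm{\uc}_{S(\R)}=\infty$ via small-data scattering (Proposition \ref{cor.3.8}(2)) and the variational trapping of Theorem \ref{thm.5.10}(1). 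Once these two ingredients are secured, the rest of the argument is routine bookkeeping of the localization errors.
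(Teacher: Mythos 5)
Your proposal follows essentially the same route as the paper's own proof: the identical truncated virial quantity with $V_R''(t)=8K(\uc(t))+O(\text{tail})$, a uniform-in-time lower bound $K(\uc(t))\ge c_0>0$ coming from the variational trapping below the ground state (the paper reads it off Corollary \ref{cor.5.11} directly, while you route it through small-data scattering plus Theorem \ref{thm.5.10}(1) --- both work), tail control via Lemma \ref{lem.6.16} with $R$ enlarged by $\sup_{t\le T}|y(t)|$, and Proposition \ref{prop.6.14} to force $R(T)/T\to 0$ and hence the contradiction, exactly as in the paper. Two small slips worth fixing: your first Euler identity should read $\sum_j\re\bigl(\overline{u_j}F_j(\uv)\bigr)=g(\uv)$ rather than $\tfrac12 g(\uv)$ (as your own later display $\sum_j(\overline{u_j}F_j+u_j\overline{F_j})=2g$ confirms, since $F_j=\tfrac12\partial_{\overline{z_j}}g$ and $g$ is quartic), and the closing negative-time argument is superfluous, because the positive-time estimate $2c_0T\le 2C_1R(T)$ with $R(T)/T\to0$ already yields $c_0\le 0$, the desired contradiction.
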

\begin{proof}[Proof of Proposition \ref{prop.6.15}] 
The proof is done by a standard truncated virial estimate.
To this end, let us introduce two key identities for our nonlinearity.
By $F_j = \partial_{\overline{z_j}}g$, one has
 \begin{equation}\label{E:6.15pf1}
  4 \sum_{j=1}^N \Re (F_j(\uv_c)\overline{u_{c,j}} )
   =  \tfrac{h}{dh} g(h\uv_c)|_{h=1}
   = 4 g(\uv_c).
  \end{equation}
and
\begin{equation} \label{E:6.15pf2}
  4 \sum_{j=1}^N \Re (F_j(\uv_c)\overline{\nabla u_{c,j}} )
   =  \nabla g(\uv_c).
\end{equation}
Note that the first identity is a special case of Euler's homogeneous function theorem.

  Pick $\varphi \in C_0^{\infty}(\R^3)$ such that $\varphi = |x|^2$ for $|x| \le 1$
  and $\varphi =0$ for $|x|\ge 2$.
  For $R>0$, let
  \begin{align*}
    V_R(t) \coloneqq \int R^2 \varphi \left(\frac{x}{R} \right) (\sum_{j=1}^{N} |u_{c,j}|^2) dx.
  \end{align*} 
  Then, in the same way as in the proof of Proposition \ref{prop.6.14}, we have 
  \begin{align*}
    V'_R(t) = 2 \im \int R (\nabla \varphi) \left(\frac{x}{R} \right)  \cdot 
    (\sum_{j=1}^{N} \nabla u_{c,j} \overline{u_{c,j}}) dx.
  \end{align*}
  By H\"older's inequality, we have the upper bound on $|V'_R|$:
  \begin{align}\label{eq:6.5}
    |V'_R(t)| & \le 4R \max_{|x| \le 2R} (\varphi \left(\frac{x}{R} \right) ) 
    \int_{|x| \le 2R} (\sum_{j=1}^{N} |\nabla u_{c,j}| |{u_{c,j}}|) dx \\ \notag
    & \le C R \Xnorm{\nabla \uv(t)}{(L^2)^N} \Xnorm{\uv(t)}{(L^2)^N}.
  \end{align}
  
	Next, we estimate $V''_R$.
By using \eqref{E:6.15pf1} and \eqref{E:6.15pf1}, one obtains 
  \begin{align*}
    V''_R(t) 
      =8 K(\uc) + A_R,
  \end{align*}
where
   \begin{align*}
  A_R :=
       \sum_{j=1}^{N} &
    \left\{
      4 \sum_{l} \int \left(( \partial_{l} ^2 \varphi) \left(\tfrac{x}{R} \right) -2 \right) 
      \left| \partial_{l} u_{c,j} \right|^2 dx  
      \right. \\  & \quad \left.
      + 4 \sum_{l \neq k} \int_{R \le |x| \le 2R} 
      (\partial_k \partial_l \varphi) \left(\tfrac{x}{R} \right)
      \partial_{l} u_{c,j}   \overline{\partial_{k}u_{c,k}} dx 
    \right. \\  & \quad \left.
   - R^{-2} \int (\Delta ^2 \varphi) \left(\tfrac{x}{R} \right) |u_{c,j}|^2 dx 
    \right\}
         - 2\int ((\Delta  \varphi) \left(\tfrac{x}{R} \right) -6) g(\uv_c) dx
  \end{align*}
 We see from the definition of $\varphi \in C_0^\infty$ and $|g(z)| \lesssim |z|^4$ that
  \begin{align} \label{eq:6.7}
    |A_R|  \le C \int_{|x| \ge R} \sum_{j=1}^{N} \left( 
    |\nabla u_{c,j}|^2 +R^{-2} |u_{c,j}|^2 + |u_{c,j}|^4 
    \right) dx.
  \end{align}
  By Corollary $\ref{cor.5.11}$, setting $\delta = 1 - M(\uc)E(\uc)/M(\Qv)E(\Qv)$, we have  
  \begin{align}\label{eq:6.8}
     K(\uc(t)) \ge  \tilde\delta H(\uc(t)) \ge  \tilde\delta E(\uc)>0
  \end{align}
  on $\R$.
  By Lemma $\ref{lem.6.16}$, 
  one sees that there exists
  $R_0 >0$ such that 
  \begin{align}\label{eq:6.9}
    \int_{|x+y(t)| \ge R_0} \left( \sum_{j=1}^{N}
      |\nabla u_{c,j}|^2 +R^{-2} |u_{c,j}|^2  + |u_{c,j}|^4  
    \right) dx \le \tfrac{\tilde\delta}{C}E(\uc),
  \end{align}
  where $C$ is the constant in \eqref{eq:6.7}.
  
  Let $t_0$ and $t_1$ be numbers such that $1 \le  t_0 < t_1 <\infty$ to be chosen later. 
  If we choose $R>0$ so that $R \ge R_0 + \sup_{t \in [t_0,\,t_1]} |y(t)|$ 
  then, $\{ |x|\ge R\} \subset \{ |x+y(t)| \ge R_0\}$ holds
  for any $t \in [t_0,\,t_1]$, and hence
  \begin{align}\label{eq:6.10}
    |V''_R(t)| \ge 7 \delta E(\uc)
  \end{align}
 is valid on $[t_0,\,t_1]$.
  Let $\eta >0$ to be chosen later. By Proposition $\ref{prop.6.14}$, one sees that there exists $t'$ such that $|y(t)| \le \eta t$ holds for any $t \ge t'$.

 Now, we define $t_0=t'$ and 
  $R=R(t_1):=R_0 + \eta t_1$.
 Note that, with the choice, \eqref{eq:6.10} holds for any $t_1>t_0$.
 Then 
  an integration of $\eqref{eq:6.10}$ over $[t_0,\,t_1]$ gives us
  \begin{align}\label{eq:6.11}
    |V'_R(t_1)- V'_R(t_0)| \ge 7 \delta E(\uc)(t_1-t_0).
  \end{align}
 $\eqref{eq:6.5}$ and $\eqref{eq:6.11}$ imply
$ 
    7 \delta E(\uc)(t_1-t_0) \le 2 C \Xnorm{\nabla \Qv}{L^2} \Xnorm{\Qv}{L^2} (R_0 + \eta t_1) 
.$ 
  Let $\eta = \frac{\delta E(\uc)}{C \Xnorm{\nabla \Qv}{L^2} \Xnorm{\Qv}{L^2} }$. Then
$  E(\uc) (5 t_1 - 7t_0) \le C.
 $ Taking $t_1 \to \infty$, we obtain $E(\uc) = 0$. 
  This implies $A_c=M(\uc)E(\uc)=0$, a contradiction.
\end{proof}

\subsection*{Acknowledgement}
S. M. was supported by JSPS KAKENHI Grant Numbers  	21H00991 and 	21H00993.

\begin{bibdiv}
  \begin{biblist}  

\bib{APT1}{article}{
   author={Ablowitz, M. J.},
   author={Prinari, B.},
   author={Trubatch, A. D.},
   title={Integrable nonlinear Schr\"{o}dinger systems and their soliton
   dynamics},
   journal={Dyn. Partial Differ. Equ.},
   volume={1},
   date={2004},
   number={3},
   pages={239--301},
   issn={1548-159X},
   review={\MR{2126196}},
   doi={10.4310/DPDE.2004.v1.n3.a1},
}
\bib{APT2}{book}{
   author={Ablowitz, M. J.},
   author={Prinari, B.},
   author={Trubatch, A. D.},
   title={Discrete and continuous nonlinear Schr\"{o}dinger systems},
   series={London Mathematical Society Lecture Note Series},
   volume={302},
   publisher={Cambridge University Press, Cambridge},
   date={2004},
   pages={x+257},
   isbn={0-521-53437-2},
   review={\MR{2040621}},
}
	
    \bib{Akahori-Nawa}{article}{
      author={Akahori, Takafumi},
      author={Nawa, Hayato},
      title={Blowup and scattering problems for the nonlinear Schr\"{o}dinger
      equations},
      journal={Kyoto J. Math.},
      volume={53},
      date={2013},
      number={3},
      pages={629--672},
      issn={2156-2261},
      review={\MR{3102564}},
      doi={10.1215/21562261-2265914},
    } 
    \bib{Bourgain99}{article}{
      author={Bourgain, J.},
      title={Global wellposedness of defocusing critical nonlinear Schr\"{o}dinger
      equation in the radial case},
      journal={J. Amer. Math. Soc.},
      volume={12},
      date={1999},
      number={1},
      pages={145--171},
      issn={0894-0347},
      review={\MR{1626257}},
     doi={10.1090/S0894-0347-99-00283-0},
    }
    \bib{Cazenave}{book}{
      author={Cazenave, Thierry},
      title={Semilinear Schr\"{o}dinger equations},
      series={Courant Lecture Notes in Mathematics},
      volume={10},
      publisher={New York University, Courant Institute of Mathematical
      Sciences, New York; American Mathematical Society, Providence, RI},
      date={2003},
      pages={xiv+323},
      isbn={0-8218-3399-5},
      review={\MR{2002047}},
    } 
    \bibitem{Cheng-Guo-Hwang-Yoon}{
      Cheng, Xing and Guo, Zihua and Hwang, Gyeongha and Yoon, Haewon, 
      {\it Global well-posedness and scattering of the two dimensional cubic focusing nonlinear Schr\"{o}dinger system},
      arXiv preprint arXiv:2202.10757, (2022)
    }
    \bib{Collianderetal}{article}{
      author={Colliander, J.},
      author={Keel, M.},
      author={Staffilani, G.},
      author={Takaoka, H.},
      author={Tao, T.},
      title={Global well-posedness and scattering for the energy-critical
      nonlinear Schr\"{o}dinger equation in $\Bbb R^3$},
      journal={Ann. of Math. (2)},
      volume={167},
      date={2008},
      number={3},
      pages={767--865},
      issn={0003-486X},
      review={\MR{2415387}},
      doi={10.4007/annals.2008.167.767},
    }

    \bib{Dod3d}{article}{
      author={Dodson, Benjamin},
      title={Global well-posedness and scattering for the defocusing,
      $L^{2}$-critical nonlinear Schr\"{o}dinger equation when $d\geq3$},
      journal={J. Amer. Math. Soc.},
      volume={25},
      date={2012},
      number={2},
      pages={429--463},
      issn={0894-0347},
      review={\MR{2869023}},
      doi={10.1090/S0894-0347-2011-00727-3},
    }
    \bib{Dod2d}{article}{
      author={Dodson, Benjamin},
      title={Global well-posedness and scattering for the defocusing,
      $L^2$-critical, nonlinear Schr\"{o}dinger equation when $d=2$},
      journal={Duke Math. J.},
      volume={165},
      date={2016},
      number={18},
      pages={3435--3516},
      issn={0012-7094},
      review={\MR{3577369}},
      doi={10.1215/00127094-3673888},
    }
    \bib{Dod1d}{article}{
      author={Dodson, Benjamin},
      title={Global well-posedness and scattering for the defocusing, $L^2$
      critical, nonlinear Schr\"{o}dinger equation when $d=1$},
      journal={Amer. J. Math.},
      volume={138},
      date={2016},
      number={2},
       pages={531--569},
      issn={0002-9327},
      review={\MR{3483476}},
      doi={10.1353/ajm.2016.0016},
    }
    \bib{Dodson}{article}{
      author={Dodson, Benjamin},
      title={Global well-posedness and scattering for the mass critical
      nonlinear Schr\"{o}dinger equation with mass below the mass of the ground
      state},
      journal={Adv. Math.},
      volume={285},
      date={2015},
      pages={1589--1618},
      issn={0001-8708},
      review={\MR{3406535}},
      doi={10.1016/j.aim.2015.04.030},
    }
    \bib{Dodson-Benjamin}{article}{
      author={Dodson, Benjamin},
      title={Global well-posedness and scattering for the focusing, cubic
      Schr\"{o}dinger equation in dimension $d=4$},
      language={English, with English and French summaries},
      journal={Ann. Sci. \'{E}c. Norm. Sup\'{e}r. (4)},
      volume={52},
      date={2019},
      number={1},
      pages={139--180},
      issn={0012-9593},
      review={\MR{3940908}},
      doi={10.24033/asens.2385},
    }
    \bib{Dodson-Benjamin:radial}{article}{
        author={Dodson, Benjamin},
        author={Murphy, Jason},
        title={A new proof of scattering below the ground state for the 3D radial
        focusing cubic NLS},
        journal={Proc. Amer. Math. Soc.},
        volume={145},
        date={2017},
        number={11},
        pages={4859--4867},
        issn={0002-9939},
        review={\MR{3692001}},
        doi={10.1090/proc/13678},
    }
     \bib{Dodson-Murphy}{article}{
        author={Dodson, Benjamin},
        author={Murphy, Jason},
        title={A new proof of scattering below the ground state for the
        non-radial focusing NLS},
        journal={Math. Res. Lett.},
        volume={25},
        date={2018},
        number={6},
        pages={1805--1825},
        issn={1073-2780},
        review={\MR{3934845}},
        doi={10.4310/MRL.2018.v25.n6.a5},
     }     
      \bib{Duyckaerts-Holmer-Roudenko}{article}{ 
        author={Duyckaerts, Thomas},
        author={Holmer, Justin},
        author={Roudenko, Svetlana},
        title={Scattering for the non-radial 3D cubic nonlinear Schr\"{o}dinger
        equation},
        journal={Math. Res. Lett.},
        volume={15},
        date={2008},
        number={6},
        pages={1233--1250},
        issn={1073-2780},
        review={\MR{2470397}},
        doi={10.4310/MRL.2008.v15.n6.a13},
      }
      \bib{Duyckaerts-Roudenko}{article}{
        author={Duyckaerts, Thomas},
        author={Roudenko, Svetlana},
        title={Threshold solutions for the focusing 3D cubic Schr\"{o}dinger
        equation},
        journal={Rev. Mat. Iberoam.},
        volume={26},
        date={2010},
        number={1},
        pages={1--56},
        issn={0213-2230},
        review={\MR{2662148}},
        doi={10.4171/RMI/592},
      }
      \bib{Fang-Xie-Cazenave}{article}{
        author={Fang, DaoYuan},
        author={Xie, Jian},
        author={Cazenave, Thierry},
        title={Scattering for the focusing energy-subcritical nonlinear
        Schr\"{o}dinger equation},
        journal={Sci. China Math.},
        volume={54},
        date={2011},
        number={10},
        pages={2037--2062},
        issn={1674-7283},
        review={\MR{2838120}},
        doi={10.1007/s11425-011-4283-9},
       }
     
      \bib{Ginibre-Velo}{article}{
        author={Ginibre, J.},
        author={Velo, G.},
        title={Scattering theory in the energy space for a class of nonlinear
        Schr\"{o}dinger equations},
        journal={J. Math. Pures Appl. (9)},
        volume={64},
        date={1985},
        number={4},
        pages={363--401},
        issn={0021-7824},
        review={\MR{839728}},
      }
      
      \bibitem{Hamano} Masaru Hamano, 
        {\it Global dynamics below the ground state for the quadratic Schödinger system in 5d}, 
        arXiv preprint arXiv:1805.12245, (2018)

      \bib{Hamano-Inui-Kishimoto}{article}{
        author={Hamano, Masaru},
        author={Inui, Takahisa},
        author={Nishimura, Kuranosuke},
        title={Scattering for the quadratic nonlinear Schr\"{o}dinger system in ${\bf
        R}^5$ without mass-resonance condition},
        journal={Funkcial. Ekvac.},
        volume={64},
        date={2021},
        number={3},
        pages={261--291},
        issn={0532-8721},
        review={\MR{4360610}},
      }
      
    \bib{Holmer-Roudenko}{article}{
     author={Holmer, Justin},
      author={Roudenko, Svetlana},
      title={A sharp condition for scattering of the radial 3D cubic nonlinear
      Schr\"{o}dinger equation},
      journal={Comm. Math. Phys.},
      volume={282},
      date={2008},
      number={2},
      pages={435--467},
      issn={0010-3616},
      review={\MR{2421484}},
      doi={10.1007/s00220-008-0529-y},
    }
    \bib{Inui-Kishimoto-Nishimura}{article}{
      author={Inui, Takahisa},
      author={Kishimoto, Nobu},
      author={Nishimura, Kuranosuke},
      title={Scattering for a mass critical NLS system below the ground state
      with and without mass-resonance condition},
      journal={Discrete Contin. Dyn. Syst.},
      volume={39},
      date={2019},
      number={11},
      pages={6299--6353},
      issn={1078-0947},
      review={\MR{4026982}},
      doi={10.3934/dcds.2019275},
    }    
    \bib{Kato}{article}{
      author={Kato, Tosio},
      title={An $L^{q,r}$-theory for nonlinear Schr\"{o}dinger equations},
      conference={
         title={Spectral and scattering theory and applications},
      },
      book={
         series={Adv. Stud. Pure Math.},
         volume={23},
         publisher={Math. Soc. Japan, Tokyo},
      },
      date={1994},
      pages={223--238},
      review={\MR{1275405}},
      doi={10.2969/aspm/02310223},
   }

  \bib{Kenig-Merle}{article}{
    author={Kenig, Carlos E.},
    author={Merle, Frank},
    title={Global well-posedness, scattering and blow-up for the
    energy-critical, focusing, non-linear Schr\"{o}dinger equation in the radial
    case},
    journal={Invent. Math.},
    volume={166},
    date={2006},
    number={3},
    pages={645--675},
    issn={0020-9910},
    review={\MR{2257393}},
    doi={10.1007/s00222-006-0011-4},
  }
  
 \bib{Killip-Visan-Zhang}{article}{
  author={Killip, Rowan},
  author={Visan, Monica},
  author={Zhang, Xiaoyi},
  title={The mass-critical nonlinear Schr\"{o}dinger equation with radial data
  in dimensions three and higher},
  journal={Anal. PDE},
  volume={1},
  date={2008},
  number={2},
  pages={229--266},
  issn={2157-5045},
  review={\MR{2472890}},
  doi={10.2140/apde.2008.1.229},
}
  \bib{Killip-Tao-Visan}{article}{
    author={Killip, Rowan},
    author={Tao, Terence},
    author={Visan, Monica},
    title={The cubic nonlinear Schr\"{o}dinger equation in two dimensions with
    radial data},
    journal={J. Eur. Math. Soc. (JEMS)},
    volume={11},
    date={2009},
    number={6},
    pages={1203--1258},
    issn={1435-9855},
    review={\MR{2557134}},
    doi={10.4171/JEMS/180},
  }
  \bib{Killip-Visan}{article}{
    author={Killip, Rowan},
    author={Visan, Monica},
    title={The focusing energy-critical nonlinear Schr\"{o}dinger equation in
    dimensions five and higher},
    journal={Amer. J. Math.},
    volume={132},
    date={2010},
    number={2},
    pages={361--424},
    issn={0002-9327},
    review={\MR{2654778}},
    doi={10.1353/ajm.0.0107},
  }

  \bib{Koh}{article}{
       author={Koh, Youngwoo},
       title={Improved inhomogeneous Strichartz estimates for the Schr\"{o}dinger
       equation},
       journal={J. Math. Anal. Appl.},
       volume={373},
       date={2011},
       number={1},
       pages={147--160},
       issn={0022-247X},
       review={\MR{2684466}},
       doi={10.1016/j.jmaa.2010.06.019},
   }
   
   \bib{Manakov}{article}{
      author = {{Manakov}, S.~V.},
      title = {On the theory of two-dimensional stationary self-focusing of electromagnetic waves},
      journal = {Sov. Phys.–JETP},
      date = {1973},
      volume = {65},
      pages = {505-516},
      adsurl = {https://ui.adsabs.harvard.edu/abs/1973ZhETF..65..505M},
  }

   \bibitem{Masaki} Satoshi Masaki, 
  {\it On scalar-type standing-wave solutions to systems of nonlinear Schr\"{o}dinger equations}, 
  arXiv preprint arXiv:2212.00754, (2022)
  \bib{Ryckman-Visan}{article}{
   author={Ryckman, E.},
   author={Visan, M.},
   title={Global well-posedness and scattering for the defocusing
   energy-critical nonlinear Schr\"{o}dinger equation in $\Bbb R^{1+4}$},
   journal={Amer. J. Math.},
   volume={129},
   date={2007},
   number={1},
   pages={1--60},
   issn={0002-9327},
   review={\MR{2288737}},
   doi={10.1353/ajm.2007.0004},
  }
  \bib{Nakanishi-Schlag}{article}{
   author={Nakanishi, K.},
   author={Schlag, W.},
   title={Global dynamics above the ground state energy for the cubic NLS
   equation in 3D},
   journal={Calc. Var. Partial Differential Equations},
   volume={44},
   date={2012},
   number={1-2},
   pages={1--45},
   issn={0944-2669},
   review={\MR{2898769}},
   doi={10.1007/s00526-011-0424-9},
}
  \bib{Tao}{article}{
      author={Tao, Terence},
      title={Global well-posedness and scattering for the higher-dimensional
      energy-critical nonlinear Schr\"{o}dinger equation for radial data},
      journal={New York J. Math.},
      volume={11},
      date={2005},
      pages={57--80},
      review={\MR{2154347}},
    }
  \bib{Tao-Visan-Zhang}{article}{
    author={Tao, Terence},
    author={Visan, Monica},
    author={Zhang, Xiaoyi},
    title={Global well-posedness and scattering for the defocusing
    mass-critical nonlinear Schr\"{o}dinger equation for radial data in high
    dimensions},
    journal={Duke Math. J.},
    volume={140},
    date={2007},
    number={1},
    pages={165--202},
    issn={0012-7094},
    review={\MR{2355070}},
    doi={10.1215/S0012-7094-07-14015-8},
   }

  \bib{Vilela}{article}{
   author={Vilela, M. C.},
   title={Inhomogeneous Strichartz estimates for the Schr\"{o}dinger equation},
   journal={Trans. Amer. Math. Soc.},
   volume={359},
   date={2007},
   number={5},
   pages={2123--2136},
   issn={0002-9947},
   review={\MR{2276614}},
   doi={10.1090/S0002-9947-06-04099-2},
  }
  \bib{Visan}{article}{
    author={Visan, Monica},
    title={The defocusing energy-critical nonlinear Schr\"{o}dinger equation in
    higher dimensions},
    journal={Duke Math. J.},
    volume={138},
    date={2007},
    number={2},
    pages={281--374},
    issn={0012-7094},
    review={\MR{2318286}},
    doi={10.1215/S0012-7094-07-13825-0},
  }  
  \bib{Wadati-Tsuchida}{article}{
    author = {Miki Wadati and Naoshi Tsuchida},
    title = {Wave Propagations in the $F=1$ Spinor Bose–Einstein Condensates},
    journal = {Journal of the Physical Society of Japan},
    date = {2006},
    volume = {75},
    pages = {1--3},
    adsurl = {https://journals.jps.jp/doi/10.1143/JPSJ.75.014301},
    doi={10.1143/JPSJ.75.014301}
  }
  \end{biblist}
\end{bibdiv}





\end{document}